\DeclareMathOperator{\Gal}{Gal}
\DeclareMathOperator{\Spec}{Spec}
\DeclareMathOperator{\coker}{coker}
\DeclareMathOperator{\Eul}{Eul}
\DeclareMathOperator{\Hom}{Hom}
\DeclareMathOperator{\id}{id}
\DeclareMathOperator{\Det}{Det}
\DeclareMathOperator{\Ind}{Ind}
\newtheorem{theorem}[equation]{Theorem}
\newtheorem{conjecture}[equation]{Conjecture}
\newtheorem{notation}[equation]{Notation}
\newtheorem{assumption}[equation]{Assumption}
\newtheorem{proposition}[equation]{Proposition}
\newtheorem{lemma}[equation]{Lemma}
\newtheorem{corollary}[equation]{Corollary}
\theoremstyle{definition}
\newtheorem{definition}[equation]{Definition}
\newtheorem{remark}[equation]{Remark}
\newtheorem{example}[equation]{Example}
\numberwithin{equation}{section} 
\numberwithin{figure}{section}
\title[\'Etale cohomology, cofinite generation, $p$-adic $L$-functions]{\'Etale cohomology, cofinite generation, and $p$-adic $L$-functions}
\author{Rob de Jeu}
\address{Faculteit der Exacte Wetenschappen\\Afdeling Wiskunde\\VU University Amsterdam\\De Boelelaan 1081a\\1081 HV Amsterdam\\The Netherlands}
\author{Tejaswi Navilarekallu}
\address{Faculteit der Exacte Wetenschappen\\Afdeling Wiskunde\\VU University Amsterdam\\De Boelelaan 1081a\\1081 HV Amsterdam\\The Netherlands}
\curraddr{Optiver\\ Strawinskylaan 3095\\ 1077 ZX Amsterdam\\ The Netherlands}
\begin{document}

\begin{abstract}
Let $p$ be a prime number.
We study certain \'etale cohomology groups with coefficients
associated to a $p$-adic Artin representation of the Galois group of a number field~$k$.
These coefficients are equipped with a modified Tate twist involving a $p$-adic index.
The groups are cofinitely generated, and we determine the
additive Euler characteristic. If $k$ is totally real and the
representation is even, we study the relation between the behaviour
or the value
of the $p$-adic $L$-function at the point $e$ in its domain,
and the cohomology groups with $p$-adic twist $1-e$.
In certain cases this gives short proofs of a
conjecture by Coates and Lichtenbaum, and the equivariant Tamagawa
number conjecture for classical $L$-functions. For $p=2$ our results
involving $p$-adic $L$-functions depend on a conjecture in Iwasawa
theory.
\end{abstract}

\subjclass[2010]{Primary: 11G40, 14F20; secondary: 11M41, 11S40, 14G10}

\keywords{number field, \'etale cohomology, cofinite generation, Euler characteristic, Artin $L$-function,
$p$-adic $L$-function}

\maketitle

\def\rightiso{\buildrel{\sim}\over{\rightarrow}}
\newcommand{\dual}[1]{#1^{\vee}}

\def\Sig{\Sigma_\infty}
\def\OKP{\Omega_{K,P}}

\newcommand{\kbar}{\overline{k}}
\def\dEul#1{\textup{Eul}_{#1}^{\langle\rangle}}
\def\Eul#1{\textup{Eul}_{#1}}
\def\Eulpol#1{F_{#1}}

\def\EC_#1{\widehat{EC}_{p,#1}}
\def\O{\mathcal{O}}
\newcommand{\Ochi}{\mathcal{O}(\chi)}
\newcommand{\Oeta}{\mathcal{O}(\eta)}
\newcommand{\Opsi}{\mathcal{O}(\psi)}
\newcommand{\Oe}{\mathcal{O}_{E}}
\newcommand{\Oep}{\mathcal{O}_{E'}}

\def\o{\omega}
\def\op{\o_p}
\def\psio{\psi_p^{\langle\rangle}}
\def\psip{\psi_p}

\newcommand{\B}{\mathfrak{B}}
\newcommand{\M}{\mathcal{M}}
\def\H{\overline{H}}
\def\tM{\widetilde{\mathcal{M}}}

\def\Fr{\textup{Fr}}
\def\Nm{\text{Nm}}
\def\rank{\mathrm{rank}}
\def\corank{\mathrm{corank}}

\def\tT{\widetilde T}
\def\mtt#1{\langle#1\rangle}
\def\cork#1#2#3{r_{#1}(#2,#3)}
\def\sec#1#2{r(#1,#2)}

\newcommand{\lat}[2]{M(#1,#2)}
\newcommand{\klat}[3]{W(#2,#3)[#1]}
\newcommand{\qlat}[2]{W(#1,#2)}
\newcommand{\vlat}[2]{V(#1,#2)}
\newcommand{\slat}[1]{M(#1)}
\newcommand{\sklat}[2]{W(#2)[#1]}
\newcommand{\sqlat}[1]{W(#1)}
\newcommand{\tqlat}[2]{W(#1,#2)\langle 1-e \rangle}
\newcommand{\latmtt}[3]{M(#1,#2)\langle #3 \rangle}
\newcommand{\klatmtt}[4]{W(#2,#3)[#1]\langle #4 \rangle}
\newcommand{\qlatmtt}[3]{W(#1,#2)\langle #3 \rangle}
\newcommand{\vlatmtt}[3]{V(#1,#2)\langle #3 \rangle}
\newcommand{\slatmtt}[2]{M(#1)\langle #2 \rangle}
\newcommand{\sklatmtt}[3]{M_{#1}(#2)\langle #3 \rangle}
\newcommand{\sqlatmtt}[2]{W(#1)\langle #2 \rangle}

\def\kbar{\overline k}
\newcommand{\C}{\mathbb C}
\newcommand{\F}{\mathbb F}
\newcommand{\Fbar}{\overline{\mathbb{F}}}
\newcommand{\Q}{\mathbb Q}
\newcommand{\Qbar}{\overline{\Q}}
\newcommand{\Qpbar}{\overline{\Q}_p}
\newcommand{\R}{\mathbb R}
\newcommand{\Z}{\mathbb Z}

\def\tgammanought{\widetilde{\gamma}_0}
\def\tg{\tilde g}
\def\tPol{\widetilde P}
\def\sp{s'}

\def\tor{\textup{tor}}
\def\ord{\mathrm{ord}}

\section{Introduction}

Let $k$ be a number field, $p$ a prime number, $ E $ a finite extension
of $ \Q_p $ with valuation ring $\Oe$,
and $ \eta : G_k \to E $ an Artin character with dual character
$ \eta^\vee $, that is, the character
of an Artin representation of $G_k = \Gal(\kbar/k)$.
If $ S $ is a finite set of finite primes of $ k $, $ m \le 0 $ an integer,
and $ \sigma : E \to \C $ an embedding,
then the value $ L_S(m,\sigma \circ \eta^\vee, k) $ of the classical truncated Artin $ L $-function
is in $ \sigma(E) $, and if we let $L_S^\ast(m,\eta^\vee,k) = \sigma^{-1} ( L_S(m,\sigma \circ \eta^\vee, k) )$
in $ E $ then this is independent of $ \sigma $ (see Section~\ref{pL-functions}).

We call $ \eta $  realizable over $E$ if the corresponding representation can
be defined over~$ E $.
This representation can then be obtained as
$\lat{E}{\eta} \otimes_{\Oe}E \simeq \lat{E}{\eta} \otimes_{\Z_p} \Q_p$
for some finitely generated torsion-free
$\Oe$-module $\lat{E}{\eta} $ on which $G_k$ acts (we shall call $\lat{E}{\eta}$ an $\Oe$-lattice for $\eta$).
If $ S $ includes all the finite primes of $ k $ at which $ \eta $ is ramified, and $ \O_{k,S} $ is obtained
from the ring of algebraic integers $ \O_k $ of $ k $ by inverting all primes in $ S $, then we may view
$ \lat{E}{\eta} $ and $\lat{E}{\eta} \otimes_{\Oe} E/\Oe \simeq \lat{E}{\eta} \otimes_{\Z_p} \Q_p/\Z_p$
as sheaves for the \'etale topology on the open subscheme $\Spec \O_{k,S}$ of $\Spec \O_k$.
We let $ \alpha : \Spec \O_{k,S} \to \Spec \O_k $ be the inclusion, but in \'etale cohomology groups
we shall write $ \O_k $ instead of $ \Spec \O_k $ and similarly for $ \O_{k,S} $.

In the special case that $ p $ is odd, $ E=\Q_p $, $ m <0 $, and $L_S^\ast(m,\eta^\vee,k) \neq 0$,
according to Conjecture~1 of~\cite{Co-Li} we should have that
the \'etale cohomology groups
$H^i(\O_k,\alpha_! (\lat{E}{\eta} \otimes_{\Z_p} \Q_p/\Z_p(m)))$
are finite for all $i \geq 0$, trivial for $i > 3$, and that
\begin{equation} \label{CL-statement}
|L_S^\ast(m,\eta^\vee,k)|_p 
=
 \prod_{i=0}^3 \# H^i(\O_k, \alpha_! (\lat{E}{\eta} \otimes_{\Z_p} \Q_p/\Z_p(m)))^{(-1)^i}
\,.
\end{equation}
(Note that on page~502 of loc.\ cit.\ the inverse of the arithmetic Frobenius is used in the definition
of the $ L $-function for $ \eta $, resulting in the standard
$ L $-function for $ \eta^\vee $.)
We observe here that by \cite[VII Theorem~12.6]{Neu99} and the definition of the completed $ L $-series,
the non-vanishing of the $ L $-value is equivalent with
$ k $ being totally real, and $ \eta(c) = (-1)^{m-1}\eta(\id_{\kbar}) $
for all complex conjugations $ c $ in~$ G_k $.

B\'ayer and Neukirch proved this conjecture for the trivial character \cite[Theorem~6.1]{Ba-Ne}
assuming the main conjecture of Iwasawa theory for this character
(later proved by Wiles in far greater generality; see \cite[Theorem~1.2 and~1.3]{Wiles90}).
In this case the conjecture is, in fact, equivalent to an earlier conjecture of Lichtenbaum
(see \cite[Conjecture~9.1]{Licht72} and
\cite[Conjecture~3.1]{Co-Li})
because $H^i(\O_k,\alpha_! (\lat{E}{\eta} \otimes_{\Z_p} \Q_p/\Z_p(m)))$ is dual to
$H^{2-i}(\O_{k,S}, \lat{E}{\eta^\vee} \otimes_{\Z_p} \Q_p/\Z_p(1-m))$ for some lattice
$\lat{E}{\eta^\vee}$ (see Remark~\ref{introremark}).
Using this duality the proof of B\'ayer-Neukirch relates the right-hand side of~\eqref{CL-statement} to the
$ p $-adic absolute value of the value of a certain $ p $-adic $ L $-function at $ m $, which equals the left-hand
side of~\eqref{CL-statement} by an interpolation formula (see~\eqref{introinterpol}).

Again letting $ p $ be any prime number,
one may therefore expect the $p$-adic absolute value of the value of a $p$-adic $L$-function at
an integer $ m $ to be related to the multiplicative Euler characteristic of certain \'etale cohomology groups with an
$ m $-th Tate twist. 
In order to generalize this interpretation from integers $ m $ to (almost)
every point in the much larger $ p $-adic domain of definition
of the $ p $-adic $ L $-function, we now introduce modified Tate twists
indexed by suitable $ p $-adic numbers.
If $ k $ is any number field and $ A $ any $ \Z_p[G_k] $-module, then $ A(m) $ is obtained from $ A $
by multiplying the action  of $ G_k $ by the $ m $-th power of the $ p $-cyclotomic character
$ \psip : G_k \to \Z_p^\times $.  For $ g $ in $ G_k $ write 
\begin{equation}\label{psi0def}
 \psip(g) = \op(g) \psio(g) 
\end{equation}
with $ \psio(g) $ in $ 1+2p\Z_p $ and $ \op = \o_{p,k} : G_k \to \mu_{\phi(2p)} \subset \Z_p^\times $ the
Teichm\"uller character of $ G_k $ for $ p $.
Let $ k_\infty/k $ be the cyclotomic $ \Z_p $-extension of $ k $, and $ \tgammanought $
in $ G_k $ 
a lift of a topological generator $ \gamma_0 $ of $ \Gal(k_\infty/k) $.  Let
$ q_k = \psip(\tgammanought) = \psio(\tgammanought) $, and for a finite extension $ E $
of $ \Q_p $ (always with $ p $-adic absolute value $ |\cdot|_p $ normalized by $ |p|_p = 1/p $), put
\begin{equation} \label{BEdef}
 \B_k(E) = \{ e \text{ in } E \text{ with } |e|_p < |q_k-1|_p^{-1} p^{-1/(p-1)}\} 
\,,
\end{equation}
i.e., those $ e $ in $ E $ where $ q_k^e $ and $ \psio(g)^e $ for $ g $ in $ G_k $ converge.
Then $ \B_k(E) $ contains $ \Oe $ but it can be much larger.
For any $ \Oe[G_k] $-module $ A $ and $ e $ in $ \B_k(E) $, we let
$ A\mtt{e}$ be $ A $ with the action of $ G_k $ multiplied by $ (\psio)^e $.
Note that for an integer $ m $, $ A(m) $ can be obtained from $ A\mtt{m} $ by twisting the action of
$ G_k $ with $ \op^m $.
Since $ \psio(g) $ is often denoted as $\langle\psip(g)\rangle$, we think of this as a 
`diamond' twist
and use notation and terminology accordingly.
Using this twist systematically leads to a more general result and simpler proofs:
it removes the need to adjoin the $ 2p $-th roots of unity (thus
avoiding many technical complications when $ p=2 $).

With $ \qlat{E}{\eta} = \lat{E}{\eta} \otimes_{\Z_p} \Q_p/\Z_p $ as before,
Theorem~\ref{cogentheorem}
concerns the structure of the \'etale cohomology groups $H^i(\O_{k,S},\qlatmtt{E}{\eta}{1-e}) $.
(We state our results for \'etale cohomology groups with
torsion coefficients. For some reasons why we prefer these over alternatives we refer
to Remark~\ref{introremark} below.)
For $i \ge 3$, it is well-known that this group is trivial if $ p \ne 2 $,
and is finite and easily computed if $ p=2 $ (see Remark~\ref{coh_dim}), so
we only consider $ i=0,1,2 $.  For its statement, let us 
call an $\Oe$-module $A$ cofinitely generated if its Pontryagin dual $A^\vee = \Hom_{\Z_p}(A,\Q_p/\Z_p)$,
on which $\Oe$ acts through its action on $A$, is a finitely generated $\Oe$-module.  
In this case we define $\corank_{\Oe} A = \rank_{\Oe} A^\vee$.
We note that then also the natural map $ A \to (A^\vee)^\vee $ is an isomorphism
(see Remark~\ref{exactness}).
We write $ \B_\eta(E) $ for $ \B_k(E) $ if $ \eta $ does not contain the trivial character, and for
$ \B_k(E) \setminus\{1\} $ if it does.  Finally, if $k$ is totally real then we call an Artin character $\eta$
of $G_k$ even if $ \eta(c) = \eta(\id_{\kbar}) $ for every complex conjugation $ c $ in $ G_k $.

We first extend results on cofinite generation that already have a
long history.  For example, the equivalent statement for Galois cohomology
of the first part of Theorem~\ref{cogentheorem}(1) below follows
already by combining \cite[Theorems~2.1 and~3.1]{Tate62}
with the Corollary on page~260 of \cite{Tate76};
a succinct general overview is given in Appendix~A.1 of~\cite{Per00}.
(For the relation between those cohomology groups and the groups
we consider, we refer to Remark~\ref{introremark} or Remark~\ref{duality-remark}.)
However, the uniform bound across all $e$ and $E$ in the second part of Theorem~\ref{cogentheorem}(1) is new.

\begin{theorem} \label{cogentheorem}
Let $ k $ be a number field, $ p $ a prime number, $ E $ a finite extension of $ \Q_p $,
and $ \eta : G_k \to E $ an Artin character realizable over~$ E $.
Let $ \lat{E}{\eta} $ be an $\Oe$-lattice for $\eta$ and let  $ \qlat{E}{\eta} = \lat{E}{\eta} \otimes_{\Z_p} \Q_p/\Z_p $.
Assume that $ S $ is a finite set of finite primes of $ k $ containing the primes above $ p $
as well as the finite primes at which $ \eta $ is ramified.
Then for $ e $ in $ \B_k(E) $ the following hold.
\begin{enumerate}
\item
$ H^i(\O_{k,S},\qlatmtt{E}{\eta}{1-e}) $ for $ i \ge 0 $ is cofinitely generated.
There is a constant $D = D(S,\eta,k)$ independent of $e$, $E$ and the choice of $\lat{E}{\eta}$
such that each of these groups can be cogenerated by at most $D$ elements.

\item
Let
$ r_i = \cork{i,S}{1-e}{\eta}=\corank_{\Oe} H^i(\O_{k,S},\qlatmtt{E}{\eta}{1-e}) $
for $ i=0,1,2 $.  Then $ r_i $ is independent of the
choice of $ \lat{E}{\eta} $, and the Euler characteristic
$  r_0 - r_1 + r_2 $ equals
$$
\qquad\quad
 -[k:\Q]\cdot\eta(\id_{\kbar}) 
+ \sum_{v \in\Sig}\corank_{\Oe}H^0(\Gal(\overline{k_v}/k_v),\qlat{E}{\eta})
\,,
$$  
where $\Sig$ is the set of all infinite places of $k$ and $ k_v $
the completion of $ k $ at $ v $.
This quantity is independent of $ e $,
non-positive, and is zero if and only if $k$ is totally real and $\eta$ is even.
Moreover, $ r_i $ is independent of $ S $ for $i = 0$, and also for $i = 1,2$ if $ e\ne 0 $.

\item
$ \cork{0,S}{1-e}{\eta} = 0 $ if $ e \ne 1 $, and 
$ \cork{0,S}{0}{\eta} $ equals the multiplicity of the trivial character in $ \eta $.
Moreover, the size of $H^0(\O_{k,S},\qlatmtt{E}{\eta}{1-e}) $ is a locally
constant function for $e$ in $ \B_\eta(E) $.

\item
If $ p\ne 2 $ then
$H^2(\O_{k,S},\qlatmtt{E}{\eta}{1-e})$ is $p$-divisible.
If $ p=2 $ then
\begin{alignat*}{1}
\quad\qquad
&
H^2(\O_{k,S},\qlatmtt{E}{\eta}{1-e})
\simeq 
\\
&
2H^2(\O_{k,S},\qlatmtt{E}{\eta}{1-e}) 
\bigoplus \oplus_{v\in\Sig} H^0(\Gal(\overline{k_v}/k_v),\qlat{E}{\eta})/2
\,,
\end{alignat*}
and $2H^2(\O_{k,S}, \qlatmtt{E}{\eta}{1-e})$ is $2$-divisible.
\end{enumerate}
\end{theorem}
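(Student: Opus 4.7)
The plan is to analyze $N := \qlatmtt{E}{\eta}{1-e}$ through the short exact sequences $0 \to N[p^n] \to N \xrightarrow{p^n} N \to 0$, exploiting the crucial simplification that, since $\psio$ takes values in $1 + 2p\Z_p \subset 1 + p\Oe$, the diamond twist $\mtt{1-e}$ is trivial modulo $p$: one has $N[p] \simeq \lat{E}{\eta}/p$ as $G_k$-modules (with the unmodified $\eta$-action), independently of $e$, and more generally $N[p^n]$ is $\lat{E}{\eta}/p^n$ endowed with an explicit, small perturbation of the $\eta$-action.

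For part~(1), the long exact sequence yields $H^i(\O_{k,S},N)[p] \hookrightarrow H^i(\O_{k,S}, N[p])$, whose right-hand side is finite by classical finiteness of \'etale cohomology of $\Spec \O_{k,S}$ with finite locally constant coefficients of $p$-power order (valid since $S$ contains the primes above $p$), giving cofinite generation. For the uniform bound on cogenerators, fix a finite Galois $K/k$ through which $\eta$ factors; Hochschild--Serre combined with the Tate global Euler--Poincar\'e formula applied over $K$ bounds $\dim_{\Oe/p} H^i(\O_{k,S}, N[p])$ in terms of $\eta(\id_{\kbar})$, $[K:\Q]$ and $\#S$ only, hence gives an $\Oe$-corank bound $D$ independent of $E$ and $e$. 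I expect this uniformity in $E$ and $e$ to be the main technical obstacle, and the mod-$p$ identification above is its essential ingredient. Part~(2) then follows by applying the global Euler--Poincar\'e formula to $N[p^n]$:
\begin{equation*}
\sum_i (-1)^i \log_p \#H^i(\O_{k,S}, N[p^n]) = -n[k:\Q]\cdot \eta(\id_{\kbar}) \cdot [\Oe/p:\F_p] + \sum_{v\in\Sig} \log_p \#H^0(\Gal(\overline{k_v}/k_v), N[p^n]),
\end{equation*}
dividing by $n[\Oe/p:\F_p]$ and letting $n\to\infty$; the local $H^0$ coranks stabilize to $\corank_{\Oe} H^0(\Gal(\overline{k_v}/k_v), \qlat{E}{\eta})$. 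Non-positivity and the totally-real-even criterion are then immediate from the inequality $\corank_{\Oe} H^0(\Gal(\overline{k_v}/k_v), \qlat{E}{\eta}) \le \eta(\id_{\kbar})$, with equality forcing triviality of $\eta$ on complex conjugation at each real $v$. Independence of $S$ for $i=0$ is automatic from the ramification condition, and for $i=1,2$ with $e\ne 0$ it follows by comparing $\O_{k,S}$ with $\O_{k,S\cup\{v\}}$ for $v\notin S$: the local contribution at $v$ has corank controlled by a $\Frobenius$ eigenvalue condition that fails only when $e = 0$.

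For part~(3), restrict to the cyclotomic tower; since $\psio|_{G_{k_\infty}} = 1$,
\begin{equation*}
H^0(\O_{k,S}, \qlatmtt{E}{\eta}{1-e}) = \bigl(\qlat{E}{\eta}^{G_{k_\infty}}\bigr)^{\gamma_0 = q_k^{e-1}},
\end{equation*}
with $\gamma_0$ acting through the original $\eta$-action. The eigenvalues of $\gamma_0$ on $\lat{E}{\eta}^{G_{k_\infty}} \otimes_{\Oe} \Qbar_p$ are roots of unity ($\eta(\tgammanought)$ has finite order because $\eta$ is Artin), whereas for $e\ne 1$ the element $q_k^{e-1} \in 1 + 2p\Oe$ is not a root of unity (the only torsion element of $1+2p\Oe$ is $1$, and $q_k\ne 1$); the eigenspace therefore vanishes and $\cork{0,S}{1-e}{\eta}=0$. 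When $e=1$ the twist is trivial and $H^0 = \qlat{E}{\eta}^{G_k}$ has corank equal to the multiplicity of the trivial character in $\eta$. Local constancy of $\#H^0$ on $\B_\eta(E)$ follows because the endomorphism $\gamma_0 - q_k^{e-1}$ of the cofinitely generated $\Oe$-module $\qlat{E}{\eta}^{G_{k_\infty}}$ has cokernel of locally constant order as $e$ varies away from $1$. For part~(4), the same long exact sequence gives $H^2(\O_{k,S}, N)/p \hookrightarrow H^3(\O_{k,S}, N[p])$, which vanishes for $p$ odd by cohomological dimension (Remark~\ref{coh_dim}), yielding $p$-divisibility; for $p=2$, Artin--Verdier duality identifies $H^3(\O_{k,S}, N[2^n])$ with a direct sum of local contributions at real places of $k$, and taking a limit over $n$ together with a section constructed from these local invariants produces the asserted direct-sum decomposition, while $2H^2$ is $2$-divisible by the same long-exact-sequence argument.
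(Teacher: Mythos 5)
Your overall architecture is the same as the paper's (d\'evissage through the finite levels $W(E,\eta)[p^n]\mtt{1-e}$, the global Euler--Poincar\'e characteristic formula for part~(2), reduction to the action of $\gamma_0$ on $\qlat{E}{\eta}^{G_{k_\infty}}$ for part~(3), and $H^3$/real-place contributions for part~(4)), but your ``crucial simplification'' is false, and it is exactly what you rely on for the one genuinely new assertion, the uniform bound in part~(1). The twist on $W(E,\eta)[p]\mtt{1-e}$ is by $\psio(g)^{1-e}$, not by $\psio(g)$, and $\B_k(E)$ contains elements with $|e|_p>1$ when $E/\Q_p$ is ramified (this enlarged domain is the whole point of the diamond twist). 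For such $e$ one need not have $\psio(g)^{1-e}\in 1+p\Oe$: e.g.\ for $p=5$, $E=\Q_5(\sqrt5)$, $e=1/\sqrt5$ and $g=\tgammanought$ one gets $|\psio(g)^{1-e}-1|_p=5^{-1/2}>5^{-1}$, so $W(E,\eta)[p]\mtt{1-e}\not\simeq \lat{E}{\eta}/p$ as $G_k$-modules even for trivial $\eta$. The paper's proof of Proposition~\ref{cofingen} repairs precisely this point: it passes to $K=k_\eta(\mu_{p^m})$ with $m$ depending only on $k$, so that $\psio(G_K)\subset 1+p^m\Z_p$ overcomes the bounded size of $|1-e|_p$ on $\B_k(E)$ and the coefficients become $(\Oe/(p))^{\eta(\id_{\kbar})}$ with \emph{trivial} action for all $e$ and $E$; the bound then comes from Hochschild--Serre and Lemma~\ref{Gcofin} applied to $\Gal(K/k)$. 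Relatedly, the Euler--Poincar\'e formula controls only the alternating sum of the $H^i$, not each group, so even over your $K$ it cannot by itself produce the constant $D$; the independence of $E$ and $e$ comes from the triviality of the coefficients over $K$, not from the characteristic formula. (Also, the arrow in your first step goes the other way: $H^i(\O_{k,S},N)[p]$ is a quotient of $H^i(\O_{k,S},N[p])$, not a submodule; harmless for finiteness, but it should be corrected.)

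Parts~(2) and~(3) are essentially the paper's arguments and are fine in outline, though in (3) the relevant object is the \emph{kernel}, not the cokernel, of $\gamma_0-q_k^{e-1}$ on $\qlat{E}{\eta}^{G_{k_\infty}}$, and local constancy of its order still requires the determinantal-ideal (Fitting ideal) argument of Proposition~\ref{h0finiteness} rather than a one-line assertion. Part~(4) for $p=2$ is under-argued: ``the same long-exact-sequence argument'' does not give $2$-divisibility of $2H^2$, because $H^3$ with finite coefficients no longer vanishes; the paper instead takes $n\gg0$ in the sequence with $a=2^n$ and uses that $H^3(\O_{k,S},\klatmtt{2^n}{E}{\eta}{1-e})\simeq\oplus_{v\in\Sig}H^3(k_v,\klatmtt{2^n}{E}{\eta}{1-e})$ is killed by $2$ to see the finite summand $B$ of $H^2$ satisfies $2B=0$, and then identifies $B\simeq H^2/2\simeq\oplus_{v\in\Sig}\ker\bigl(H^1(k_v,W[2])\to H^1(k_v,W)\bigr)\simeq\oplus_{v\in\Sig}\qlat{E}{\eta}_{+,v}/2$. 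No section needs to be constructed: the splitting is automatic from the structure of cofinitely generated $\Oe$-modules, but the two computations just named are the real content and are missing from your sketch.
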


We mention that if $H^i(\O_{k,S},\qlatmtt{E}{\eta}{1-e}) $ is finite for $i =0$ or~$1$, then
Proposition~\ref{contprop}, Remark~\ref{directsumremark} and
Remark~\ref{countremark} express its size and structure
using cohomology groups with finite coefficients.
By Theorem~\ref{cogentheorem}(2), this can
happen for $i=1$ only when $k$ is totally real and $\eta$ is even,
in which case this group is finite for $ i=0 $ and trivial for $ i=2 $
by that theorem.
Of independent interest is that for such $ k $ and $ \eta $
a short exact sequence of coefficients gives rise to
a nine term exact sequence involving only $H^i(\O_{k,S},\cdot)$ for $i =0,1,2$ (see Lemma~\ref{9lemma}).

\begin{remark} \label{introremark}
We formulated Theorem~\ref{cogentheorem} (and Theorem~\ref{introtheorem} below)
for the groups $ H^i(\O_{k,S},\qlatmtt{E}{\eta}{1-e}) $
because this gives a uniform point of view for all primes and all
modified Tate twists, unlike the groups $ H^i(\O_k, \alpha_! \qlatmtt{E}{\eta}{e}) $
and $ H_{cts}^i(\O_{k,S},\latmtt{E}{\eta}{1-e}) $ discussed
below. On the other hand, the groups $ \varprojlim_n H_c^i(\O_k,\alpha_! (\latmtt{E}{\eta}{e}/p^n)) $
below also provide a uniform approach but they are technically more
difficult to handle. For example, 
in the proof of Theorem~\ref{ebn} the only cohomology groups
that we have to analyse in detail in our approach are of low degree, and those can be
described fairly explicitly.

We refer to Remark~\ref{duality-remark} and the proof
of Proposition~\ref{cofingen} for details of the following discussion.

Let $k$ be any number field and $ \eta : G_k \to E $ any Artin character.
Fix a finite set $S$ of finite primes of $k$ containing all primes at which $\eta$ is
ramified as well as all 
primes of $k$ lying above $p$, and let $\alpha: \Spec \O_{k,S} \rightarrow \Spec \O_k$ be the natural inclusion.
Then for an appropriate choice of lattices we have
\begin{alignat*}{1}
    \varprojlim_n H_c^i(\O_k,\alpha_! (\latmtt{E}{\eta}{e}/p^n))
& \simeq
    \varprojlim_n H_c^i(\O_{k,S},\latmtt{E}{\eta}{e}/p^n) 
\\
& \simeq
   \left( H^{3-i}(\O_{k,S},\qlatmtt{E}{\eta^\vee \op}{1-e}) \right)^\vee
\end{alignat*}
for $e$ in $\B_k(E)$ and $i = 0,\dots,3$, where $H_c^i$ denotes cohomology with compact support.
If $p \ne 2$ and all $H^i(\O_{k,S},\qlatmtt{E}{\eta^\vee \op}{1-e})$ are finite then also
\begin{equation} \label{introdual}
H^i(\O_k, \alpha_! \qlatmtt{E}{\eta}{e}) \simeq H^{2-i}(\O_{k,S}, \qlatmtt{E}{\eta^\vee \op}{1-e})
\end{equation}
for $e$ in $\B_k(E)$, $i = 0, 1, 2$, the group on the left being trivial for $i \ge 3$.

For $ p $ again an arbitrary prime,
let $\Omega_S$ be the maximal extension of $k$ that is
unramified outside of $S$ and the infinite primes of $k$, and let $G_S = \Gal(\Omega_S/k)$.
Then $H^i(\O_{k,S},\qlatmtt{E}{\eta}{1-e}) \simeq H^i(G_S,\qlatmtt{E}{\eta}{1-e})$
where the right-hand side denotes continuous group cohomology.

Finally,
for continuous \'etale cohomology groups $ H_{cts}^i $ with
$\latmtt{E}{\eta}{1-e} \simeq \varprojlim_n \latmtt{E}{\eta}{1-e}/p^n$
as coefficients, we have a long exact sequence
\begin{alignat*}{1}
&  \cdots \to H_{cts}^i(\O_{k,S},\latmtt{E}{\eta}{1-e}) \to H_{cts}^i(\O_{k,S},\latmtt{E}{\eta}{1-e})\otimes_{\Z_p} \Q_p
\\
& \to
 H^i(\O_{k,S},\qlatmtt{E}{\eta}{1-e}) \to H_{cts}^{i+1}(\O_{k,S},\latmtt{E}{\eta}{1-e}) 
\to \cdots
\,,
\end{alignat*}
where $H_{cts}^i(\O_{k,S},\latmtt{E}{\eta}{1-e}) \simeq \varprojlim_n H^i(\O_{k,S},\latmtt{E}{\eta}{1-e}/p^n)$
is a finitely generated $\Oe$-module.
Hence $\rank_{\Oe} H_{cts}^i(\O_{k,S},\latmtt{E}{\eta}{1-e}) = \corank_{\Oe} H^i(\O_{k,S},\qlatmtt{E}{\eta}{1-e})$.
If $ H^j(\O_{k,S},\qlatmtt{E}{\eta}{1-e}) $ is finite for $ j=i $ and $ i+1 $,
then
$ H^i(\O_{k,S},\qlatmtt{E}{\eta}{1-e}) \simeq H_{cts}^{i+1}(\O_{k,S},\latmtt{E}{\eta}{1-e})$
as well.
\end{remark}

Our second main result Theorem~\ref{introtheorem} is a relation between the cohomology groups
$H^i(\O_{k,S},\qlatmtt{E}{\eta}{1-e})$
and a certain $p$-adic $L$-function $L_{p,S}(e,\eta,k)$.  For such a $p$-adic $L$-function to be defined
and non-trivial, 
we must have $k$ totally real and $\eta$ even.
By Theorem~\ref{cogentheorem}(2),
this is also the only case where those cohomology groups can
simultaneously be finite for $ i=0,1 $ and~$ 2 $.
For the sake of clarity we write $\chi$ for an even Artin character when $k$ is a totally real number
field, and $\eta$ for an arbitrary Artin character when $k$ is an arbitrary number field.

The conjecture of Coates and Lichtenbaum will be deduced from the purely $ p $-adic result
Theorem~\ref{introtheorem} for $ \chi = \eta^\vee\op^{1-m} $ and $ e=m $, by using~\eqref{introdual}
and two small miracles that occur only at integers.
The first is the relation between $ A\mtt{m} $ and $ A(m) $ for any integer $ m $,
so that we may assume
$ \qlatmtt{E}{\chi}{1-m} =\qlat{E}{\chi \op^{m-1}}(1-m)$.
The other is the interpolation formula~\eqref{introinterpol} for negative integers
(and possibly $ 0 $).
It is surely no coincidence that the exponent of $ \op $ is the same in both miracles.

In order to state Theorem~\ref{introtheorem} we now discuss $ p $-adic $ L $-functions (see Section~\ref{pL-functions}
for details).
Let $ k $ be a totally real number field, $ p $ a prime number, $ E $ a finite extension of $ \Q_p $ with valuation ring
$ \Oe $, and $ \chi : G_k \to E $ an even Artin character
realizable over~$ E $.
If $S$ is a finite set of finite primes of $k$ containing the primes above $p$,
then on $ \B_k(E) $ there exists a meromorphic $ E $-valued $p$-adic $L$-function $L_{p,S}(s,\chi,k)$, defined at all negative
integers~$ m $, where it satisfies the interpolation formula
\begin{equation} \label{introinterpol}
 L_{p,S} (m,\chi, k) = L_S^\ast(m,\chi\op^{m-1},k)
\,.
\end{equation}
The same statement sometimes also holds when $ m=0 $.
The usual $ p $-adic $ L $-function $ L_p (m,\chi, k) $ is
obtained when $ S $ consists only of the finite primes of $ k $
lying above~$ p $.

A consequence of \cite[Proposition 5]{greenberg83} is
that the main conjecture of Iwasawa theory for~$ p $
(i.e., the statements in Theorem~\ref{mainc} and Remark~\ref{notSremark}
for $p$) implies that $ L_{p,S}(s,\chi,k) $ is analytic on $ \B_k(E) $ if $\chi$ does not contain the trivial character,
and meromorphic on $\B_k(E)$ with at most a pole at $s=1$ if it does.
If $ p \ne 2 $ we could use this by \cite[Theorems~1.2 and~1.3]{Wiles90},
but our results imply this consequence of Greenberg's stronger result
when $ S $ also contains the finite primes of $ k $ at which $ \chi $
is ramified, and we remove this restriction by proving a lower bound
for the corank of $ H^1(\O_{k,S},\qlatmtt{E}{\chi}{1}) $
in Proposition~\ref{OPprop}.
In fact, working directly with Theorem~\ref{mainc} if $ p\ne2 $ or Assumption~\ref{2-mainc}
if $ p=2 $, i.e., working with a weaker version of the main conjecture of Iwasawa theory,
we get a uniform approach for all~$ p $.

A zero of $ L_{p,S}(s,\chi,k) $ always lies in $ \B_k(E') $ for some finite extension $ E' $
of $ E $ (see Remark~\ref{zero_remark}), so will be interpreted by Theorem~\ref{introtheorem}.
Note that $ \B_\chi(E) $ is the expected domain for $ L_p(s,\chi,k) $
and $ L_{p,S}(s,\chi,k) $, as the Leopoldt conjecture implies that $ L_{p,S}(s,\chi,k) $
should not be defined at $ 1 $ if $ \chi $ contains the trivial character.
By Theorem~\ref{cogentheorem}(3) it is also the set for which $H^0(\O_{k,S},\qlatmtt{E}{\chi}{1-e}) $ is finite,
and where Theorem~\ref{introtheorem}(3) gives
a precise correspondence between non-vanishing of the $ p $-adic $ L $-function and the finiteness of
the cohomology groups.

We can now state our second main result.

\begin{theorem} \label{introtheorem}
If in Theorem~\ref{cogentheorem} $k$ is totally real and $\eta = \chi$ is even, then
$H^i(\O_{k,S},\qlatmtt{E}{\chi}{1-e})$ for $i \ge 0$ is finite for all but finitely many $ e $ in $ \B_k(E) $.
Moreover, the following hold, where for $p = 2$ we make Assumption~\ref{2-mainc}.
\begin{enumerate}
\item
Let $ \sum_{i=0}^2 (-1)^i i \cdot r_i = r_2 - r_0 $ be the secondary Euler characteristic
and let $ \nu=\nu_S(1-e,\chi) = \ord_{s=e} L_{p,S}(e,\chi,k) $.  Then 
we have that
$ \min(1-r_0,\nu) \le r_2-r_0 \le \nu $.

\item
$L_{p,S}(s,\chi,k)$ and $L_p(s,\chi,k)$ are
meromorphic on $ \B_k(E) $; the only possible pole is at $s=1$, with
order at most the multiplicity of the trivial character in $\chi$.

\item
For $ e $ in $ \B_\chi(E) $ the following are equivalent:
\begin{enumerate}
\item
$L_{p,S}(e,\chi , k) \neq 0$;

\item
$H^1(\O_{k,S},\qlatmtt{E}{\chi}{1-e}) $ is finite;

\item
$H^2(\O_{k,S},\qlatmtt{E}{\chi}{1-e}) $ is finite;

\item
$H^2(\O_{k,S},\qlatmtt{E}{\chi}{1-e}) $ is trivial.

\end{enumerate}
If this is the case, then
\begin{equation*}
|L_{p,S}(e,\chi , k)|_p = 
\left( \frac{ \# H^0(\O_{k,S},\qlatmtt{E}{\chi}{1-e} ) }{ \# H^1(\O_{k,S},\qlatmtt{E}{\chi}{1-e} ) }
\right)^{1/[E:\Q_p]}
\,. 
\end{equation*}
\end{enumerate} 
\end{theorem}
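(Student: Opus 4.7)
The plan is to transfer the problem to the cyclotomic Iwasawa tower, invoke the main conjecture of Iwasawa theory (Theorem~\ref{mainc} for $p$ odd, or Assumption~\ref{2-mainc} when $p=2$) to identify a characteristic power series with $L_{p,S}(s,\chi,k)$, and use descent along the tower to read off the cohomology groups at the base. Concretely, with $\Gamma = \Gal(k_\infty/k)$, consider the $\chi$-component $X^\vee$ of the appropriate inverse limit of cohomology along the tower, viewed as a finitely generated module over $\Oe[[\Gamma]] \simeq \Oe[[T]]$ via $\gamma_0 \mapsto 1+T$. The Hochschild--Serre spectral sequence along $\Gamma$ has only two columns and gives, for each $i$, short exact sequences expressing $H^i(\O_{k,S},\qlatmtt{E}{\chi}{1-e})$ in terms of the kernel and cokernel of $\tgammanought - q_k^{1-e}$ acting on $H^{i-1}$ and $H^i$ over $k_\infty$. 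After invoking the weak Leopoldt conjecture (a theorem in the totally real even setting) together with known finiteness at the top of the tower, the groups $H^1$ and $H^2$ at the bottom become the kernel and cokernel of $\tgammanought - q_k^{1-e}$ on $X^\vee$, up to a small correction from $H^0$ over $k_\infty$ that is nontrivial only in the exceptional case $e=1$ with $\chi$ containing the trivial character. The main conjecture then pins down the characteristic ideal of $X$ by a power series $G \in \Oe[[T]]$ such that, after the substitution $T = q_k^{1-s} - 1$ and up to a factor contributing the pole at $s=1$, one recovers $L_{p,S}(s,\chi,k)$. Since $G$ is a Weierstrass element and has only finitely many zeros in the disk corresponding to $\B_k(E)$, this immediately yields the initial statement that the $H^i$ are finite for all but finitely many $e$.

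With this setup, part~(1) follows from the structure theorem for finitely generated torsion $\Oe[[\Gamma]]$-modules: the order of vanishing of $G$ at $T = q_k^{1-e} - 1$ equals the $\Oe$-corank of $\coker(\tgammanought - q_k^{1-e}\colon X^\vee \to X^\vee)$, and the stated two-sided bound $\min(1-r_0,\nu) \le r_2 - r_0 \le \nu$ accommodates both the identification $r_2 - r_0 = \nu$ in the generic case and the discrepancy arising from the $H^0$-correction in the exceptional one. Part~(2) is immediate from the analyticity of $G$ on $\B_k(E)$ together with Greenberg's analysis of the pole at $s=1$. For part~(3), the equivalence of (a) with (b), (c) and (d) is a direct translation of ``$G(q_k^{1-e}-1) \ne 0$'' through the descent complex, using Theorem~\ref{cogentheorem}(4) to upgrade finiteness of $H^2$ to triviality (for $\chi$ even, the $p=2$ infinite-place summand vanishes). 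The explicit formula for $|L_{p,S}(e,\chi,k)|_p$ is the standard expression of the $p$-adic determinant of $\tgammanought - q_k^{1-e}$ on $X^\vee$ as the ratio of sizes of its finite kernel and cokernel, yielding $\#H^0/\#H^1$ after the $[E:\Q_p]$-th root accounts for the normalization of $|\cdot|_p$ on $E$.

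The main obstacle I anticipate is reconciling the cohomological $S$ with the set of primes above $p$ natural for the $p$-adic $L$-function: enlarging $S$ by a finite prime $v \nmid p$ at which $\chi$ is unramified simultaneously removes an Euler factor on the analytic side and alters local cohomology on the arithmetic side, and one must verify that these correspond to $p$-adic units at every relevant diamond twist, so that neither the order of vanishing nor the $p$-adic valuation is affected. Secondary difficulties are the $p=2$ case, where Assumption~\ref{2-mainc} replaces an available theorem and the extra terms of Theorem~\ref{cogentheorem}(4) must be tracked carefully, and the removal of the a priori hypothesis that $S$ contain all primes at which $\chi$ is ramified, which will rely on Proposition~\ref{OPprop} to supply the necessary lower bound for the corank of $H^1$.
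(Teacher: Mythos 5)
Your overall strategy --- descend from the cyclotomic tower and convert the main conjecture into a statement about the kernel and cokernel of $\tgammanought - q_k^{1-e}$ on an Iwasawa module --- is indeed the engine of the paper's proof, but only in the restricted setting where it is actually available: Theorem~\ref{mainc} (and Assumption~\ref{2-mainc} for $p=2$) identifies the characteristic power series with $\tg_\chi$ only for $1$-dimensional $\chi$ with $p\nmid[k_\chi:k]$ (resp.\ of odd order). For a general even Artin character there is no ``$\chi$-component $X$ with characteristic ideal given by $L_{p,S}(s,\chi,k)$'' to descend from, so your plan is silent on the main body of the work: the paper reduces to the prime-to-$p$ abelian case by an induction on the $p$-part of the order using Galois conjugacy over $\Q_p$ (proof of Theorem~\ref{ebn}), and then to arbitrary $\chi$ by Brauer induction together with the compatibility of both $\EC_S$ and $L_{p,S}$ with induction and sums of characters (Theorem~\ref{ebn2}). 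Crucially, these reductions only yield the conclusion for all but finitely many $e$, because a constituent $L$-value may vanish at a given $e$ even when $L_{p,S}(e,\chi,k)\ne0$. To recover an arbitrary fixed $e$ --- which is what parts~(1) and~(3) assert --- the paper runs a separate counting/approximation argument: it compares $e$ with nearby $e'$ using $\klatmtt{p^n}{E}{\chi}{1-e}\simeq\klatmtt{p^n}{E}{\chi}{1-e'}$ and the long exact sequences of~\eqref{texact}, and deduces both $r_2-r_0\le\nu$ and (when $r_2=0$) the nonvanishing and the value formula by continuity of $|L_{p,S}|_p$. Your proposal contains no substitute for this step, and without it the equivalences in part~(3) and the lower bound in part~(1) are not reached at the exceptional points.

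There is also a pointwise error in your part~(1): the order of vanishing of the characteristic power series at $T=q_k^{1-e}-1$ does \emph{not} equal the $\Oe$-corank of the cokernel of $\tgammanought-q_k^{1-e}$; the corank counts the number of elementary divisors $g_j$ vanishing there, while $\nu$ counts them with multiplicity, so one only gets $r_2-r_0\le\nu$. Equality is exactly Conjecture~\ref{conjecture} (square-freeness of the $g_j$), which the paper explicitly leaves open --- this is why the theorem states a two-sided inequality rather than an identity, and why the lower bound $\min(1-r_0,\nu)\le r_2-r_0$ must be proved by the approximation argument (plus the explicit analysis of $e=1$ and the trivial character via $\rank_{\Z_p}\M\le\ord_{T=0}\tg_{\chi_0}(T)$), not by the structure theorem. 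Finally, deducing part~(2) from ``Greenberg's analysis of the pole at $s=1$'' is not available here: \cite[Proposition~5]{greenberg83} presupposes the full main conjecture, which for $p=2$ goes beyond Assumption~\ref{2-mainc}; the paper instead derives part~(2) from the inequality $r_2-r_0\le\nu$ together with Remark~\ref{analytic-remark}, and uses Proposition~\ref{OPprop} (as you correctly anticipate) only to pass from $L_{p,S}$ to $L_p$ when $S$ omits ramified primes.
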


The assumption for $ p=2 $ in Theorem~\ref{introtheorem} is fulfilled 
if $ k=\Q $ and $ \chi $ is a multiple of the trivial character
(see Remark~\ref{itholds}).
We also remark that part of Theorem~\ref{introtheorem}
(for $ p $ odd, $ e \ne 1 $ an odd integer, and $ \chi = \op^{1-e} $)
is contained in \cite[Theorem~6.1]{Ba-Ne}, 
under an assumption in Iwasawa theory since proved by Wiles.
A similar partial result
(for $ p $ odd, $ e $ a negative odd integer, and $ \chi $ an even Artin character)
is outlined in the proof of \cite[Proposition~6.15]{ckps}.
The case $ p=2 $ is not discussed in either paper.

Theorem~\ref{introtheorem} would be even more complete if the upper bounds
in 
part~(1) and part~(2)
were sharp.
The equality $ \nu_S(0,\chi) = - \cork{0,S}{0}{\chi} $ in part~(2)
for all $ k $
is equivalent with the Leopoldt conjecture for all $ k $
(see Remark~\ref{sisone}).
Equivalently, we always have
$ \min(\cork{0,S}{0}{\chi},\cork{2,S}{0}{\chi}) = 0 $.
The other equality, in part~(1), we formulate as a conjecture,
which is itself implied by some folklore conjectures in Iwasawa
theory (see Conjecture~\ref{conjecture}).

\begin{conjecture} \label{introconjecture}
We have 
$ \nu_S(1-e,\chi) = \cork{2,S}{1-e}{\chi} - \cork{0,S}{1-e}{\chi} $
in Theorem~\ref{introtheorem}(1).
\end{conjecture}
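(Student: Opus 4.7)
The plan is to reduce Conjecture~\ref{introconjecture} to the main conjecture of Iwasawa theory for $\chi$ (Theorem~\ref{mainc} if $p$ is odd, Assumption~\ref{2-mainc} if $p=2$) together with a vanishing statement for the pseudo-null submodule of a suitable Iwasawa module, thereby closing the gap in the inequality $r_2 - r_0 \le \nu$ of Theorem~\ref{introtheorem}(1). First I would pass to the cyclotomic $\Z_p$-tower: set $\Lambda = \Oe[[T]]$ with $T = \tgammanought - 1$, and let $X = X_{S,\chi}$ denote the $\chi$-isotypic component of the Galois group of the maximal abelian pro-$p$ extension, unramified outside $S$, of the compositum of $k_\infty$ with the field cut out by $\chi$. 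By the main conjecture, $\operatorname{char}_\Lambda X$ is generated by a distinguished power series $f_\chi(T) \in \Lambda$ whose specialisation $f_\chi(q_k^s - 1)$ agrees with $L_{p,S}(s,\chi,k)$ on $\B_k(E)$ up to a finite explicit product of Euler factors of the type handled in Proposition~\ref{OPprop}. In particular $\nu_S(1-e,\chi) = \ord_{T = q_k^e - 1} f_\chi(T) = \ord_{T = q_k^e - 1} \operatorname{char}_\Lambda X$.

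Next I would carry out the descent from $k_\infty$ down to $k$. The Hochschild--Serre spectral sequence for $\Gal(k_\infty/k) \simeq \Z_p$, combined with the Poitou--Tate identifications of Remark~\ref{introremark}, matches the Pontryagin duals of $H^2(\O_{k,S}, \qlatmtt{E}{\chi}{1-e})$ and $H^0(\O_{k,S}, \qlatmtt{E}{\chi}{1-e})$, up to finite kernels and cokernels that do not affect $\Oe$-coranks, with the coinvariants $X/\tau X$ and the invariants $X^\tau$ respectively, where $\tau = T - (q_k^e - 1) \in \Lambda$ is the descent element encoding the diamond twist by $e$. Applying the structure theorem for finitely generated torsion $\Lambda$-modules to the snake-lemma sequence for multiplication by $\tau$ on $X$ then yields
\begin{equation*}
\cork{2,S}{1-e}{\chi} - \cork{0,S}{1-e}{\chi} = \ord_{T = q_k^e - 1} \operatorname{char}_\Lambda X + \delta,
\end{equation*}
with $\delta \ge 0$ recording the $\Oe$-rank contribution from the pseudo-null part of $X$ supported at the height-one prime $(\tau)$.

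The main obstacle is to prove $\delta = 0$, i.e., that $X_{S,\chi}$ has no non-zero pseudo-null $\Lambda$-submodule supported at the height-one primes that arise as $e$ varies. This is precisely the folklore Iwasawa-theoretic input of Conjecture~\ref{conjecture}, a version of the $\mu = 0$/Greenberg-type vanishing in the totally real setting; it is known for abelian $k$ and $\chi$ of order prime to $p$ by explicit cyclotomic-unit calculations, but appears genuinely open in the general Artin case. A secondary technical point, concentrated at $e = 1$ when $\chi$ contains the trivial character, is the Leopoldt conjecture, which as noted in Remark~\ref{sisone} is needed to rule out unexpected fixed points of $\tau$. My proposal is therefore not a proof of Conjecture~\ref{introconjecture} outright, but a clean reduction of it to these two standard and widely believed Iwasawa-theoretic inputs.
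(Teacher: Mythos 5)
The statement you are addressing is left as a conjecture in the paper (it is only reduced, in the discussion preceding Conjecture~\ref{conjecture}, to folklore Iwasawa-theoretic statements), and your proposal is likewise only a reduction; the problem is that the auxiliary input you reduce to is the wrong one. In the setting relevant here the Iwasawa algebra is $\Oe[[T]]$, so ``pseudo-null'' means \emph{finite}, and a finite $\Oe[[T]]$-submodule cannot create any discrepancy between coranks: its $\Gamma$-invariants and coinvariants are finite, hence contribute $0$ to $r_2-r_0$. Moreover, the absence of non-zero finite submodules of the relevant module $\M\otimes_{\Z_p[G]}\slat{\chi}$ is not a conjecture at all --- it is Iwasawa's theorem, which the paper already invokes unconditionally in the proof of Theorem~\ref{ebn}. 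If your quantity $\delta$ were really accounted for by the pseudo-null part, your argument (given Theorem~\ref{mainc}, resp.\ Assumption~\ref{2-mainc}) would prove Conjecture~\ref{introconjecture} outright, at least for $\chi$ of order prime to $p$; but the equality is genuinely open even for $k=\Q$, and your claim that it is ``known for abelian $k$'' by cyclotomic-unit computations is not correct (it amounts to the open simple-zeros property of Kubota--Leopoldt $p$-adic $L$-functions). Incidentally, a pseudo-null module cannot be ``supported at a height-one prime $(\tau)$'' in the first place.

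The actual source of the possible gap between $r_2-r_0$ and $\nu$ is multiplicity, not a parasitic submodule. With $Y=\M\otimes_{\Z_p[G]}\slat{\chi}$ isogenous to $\bigoplus_i \Oe[[T]]/(\pi^{\mu_i})\oplus\bigoplus_j \Oe[[T]]/(g_j(T))$, the descent via Lemma~\ref{isogeny_lemma} shows that $r_1=r_2$ equals the \emph{number} of indices $j$ with $g_j(q_k^{1-e}-1)=0$, each counted once, whereas the main conjecture identifies $\nu$ with the \emph{total} order of vanishing of $\prod_j g_j$ at that point; so the defect is $\sum_j(m_j-1)$ over the vanishing factors, which is zero exactly when no $g_j$ has a multiple root there. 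That semisimplicity-type statement is the correct folklore input, and it is precisely what the paper isolates in Conjecture~\ref{conjecture} (parts~(1)--(3), with part~(3) equivalent to the equality for $1$-dimensional $\chi$ of order prime to $p$, and (4) following by the induction arguments of Theorems~\ref{ebn} and~\ref{ebn2}). Your remark about Leopoldt at $e=1$ is in the spirit of Remark~\ref{sisone}, but the pseudo-null hypothesis in your reduction should be replaced throughout by the no-multiple-roots hypothesis.
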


In this paper we give two applications of the $ p $-adic statements in Theorem~\ref{introtheorem}
to classical $ L $-functions.
One of them is contained in Section~\ref{etnc_section}.  It consists of a short proof of the
equivariant Tamagawa number conjecture at negative integers
for Artin motives of the right parity over a totally real number field $k$
with coefficients in a maximal order.
(In this case the conjecture is equivalent with the Bloch-Kato
conjecture~\cite[\S5]{Blo-Kat90},
and the corresponding result for a Dirichlet motive over $ \Q $ with
$ p $ odd and $ p^2 $ not dividing the conductor of the underlying
Dirichlet character, was already stated in \S10.1(b) of ~\cite{Fon92}.)
For Dirichlet motives over $\Q$ with $p$ odd,
Huber and Kings proved this conjecture at all integers
\cite[Theorem~1.3.1]{Hu-Ki03}, whereas in the same situation Burns and Greither
proved the stronger statement where
the maximal order is replaced by the group ring \cite[Corollary~8.1]{BuGr}.
(See Remark~\ref{finalremark}(2) for those and other statements.)

The other application we give here, namely a proof of a generalization of the conjecture of Coates and Lichtenbaum
discussed above.  Let $m$ be a negative integer, $ \eta : G_k \to E $ an Artin character that is
realizable over $ E $ with $L_S^\ast(m,\eta^\vee,k) \neq 0$,
where $ E $ is a finite extension of $ \Q_p $.  Then $k$ is totally real and $\eta\op^{m-1}$ is even.
As in the paragraph containing~\eqref{CL-statement}, let $ S $ contain the finite primes of $ k $ at which $ \eta $
is ramified.  Since the conjecture is independent of the choice of $S$ by (the proof of)
\cite[Proposition~3.4]{Co-Li}, we may assume
$ S $ also contains the finite primes of $ k $ lying above $ p $, hence all finite primes
of $ k $ at which the even Artin character $ \chi = \eta^\vee\op^{1-m} $ is ramified.
We see from ~\eqref{introinterpol} that
$ | L_{p,S}(m,\chi,k) |_p = | L_{S}^\ast(m,\eta^\vee,k) |_p \ne 0$, so by Theorem~\ref{introtheorem}(3)
the groups $H^i(\O_{k,S},\qlatmtt{E}{\chi}{1-m})$ are all finite.  By Remark~\ref{introremark},
with $\eta$ replaced with $\eta \op^m$, if $p \ne 2$ then for $ i=0 $, $ 1 $ and $ 2 $,
\begin{alignat*}{1}
 H^i(\O_k,\alpha_! \qlat{E}{\eta} (m))
& =
 H^i(\O_k,\alpha_! \qlatmtt{E}{\eta \op^m}{m})
\\
& \simeq
 H^{2-i}(\O_{k,S},\qlatmtt{E}{\chi}{1-m})^\vee 
\end{alignat*}
with appropriate choice of lattices (see Remark~\ref{duality-remark}),
and for $ i=3 $ the left-hand side is trivial.
Also, the left-hand side is trivial for $i > 3$ by Theorem~II.3.1 and Proposition~II.2.3(d) of~\cite{Mil86}.
By Theorem~\ref{introtheorem}(3) it then follows that
\begin{alignat*}{1}
 |L_S^\ast(m,\eta^\vee,k)|_p ^{[E:\Q_p]}
& =
 |L_{p,S}(m,\chi,k)|_p ^{[E:\Q_p]}
=
 \frac{\# H^0(\O_{k,S}, \qlatmtt{E}{\chi}{1-m} )}{\# H^1(\O_{k,S}, \qlatmtt{E}{\chi}{1-m} )} 
\\
& =
\frac{\#H^2(\O_k,\alpha_!\qlat{E}{\eta}(m))}{\#H^1(\O_k,\alpha_!\qlat{E}{\eta}(m))} 
\,.
\end{alignat*}
Similarly, if
$k$ is totally real, $ \eta^\vee\op $ even, $S$ contains the primes lying above $p$,
$L_S^\ast(0,\eta^\vee,k) \ne 0$,
and~\eqref{introinterpol} is true with $m = 0$ and $ \chi = \eta^\vee\op $,
then those equalities also hold for $m=0$.

If $ p=2 $ then the first two equalities hold for $ m\leq0 $
under the same conditions, where for the second we make
Assumption~\ref{2-mainc}.
The third equality holds for all primes $ p $ 
provided that
we replace the last quotient with
$ \# \H_c^3(\O_{k,S},\lat{E}{\eta}(m)) / \# \H_c^2(\O_{k,S},\lat{E}{\eta}(m)) $
for certain cohomology groups $\H_c^j$ with compact support (see
Section~\ref{etnc_section}).
In fact, for $ m<0 $, the resulting equality 
\begin{equation*}
  |L_S^\ast(m,\eta^\vee,k)|_p^{[E:\Q_p]}
= 
  \frac{\# \H_c^3(\O_{k,S},\lat{E}{\eta}(m))}{\# \H_c^2(\O_{k,S},\lat{E}{\eta}(m))}
\end{equation*}
is part of the equivariant Tamagawa number conjecture, a generalization
of the conjecture of Coates and Lichtenbaum.

The paper is organized as follows.
In Section~\ref{cofinite} we prove Theorem~\ref{cogentheorem} and
discuss how the cohomology groups depend on the choice of
the lattice, the field $E$, and on the finite set of primes~$S$.  If the
cohomology groups are finite then we describe them in terms of cohomology with finite coefficients.
In Section~\ref{pL-functions} we give the definition of the $p$-adic $L$-function
and discuss the interpolation formula relating it to the classical $L$-function.  In Section~\ref{iwasawa-theory}
we discuss the main conjecture of Iwasawa theory, including the case $p = 2$.  
In Section~\ref{EC} we restrict ourselves to $k$ totally real and $\chi$ even and prove
Theorem~\ref{introtheorem} in four steps.  We first prove most of it for a $ 1 $-dimensional 
character of order not divisible by $p$, then for any $ 1 $-dimensional
character, followed by the case of all characters.
The fourth step then strengthens this to Theorem~\ref{introtheorem}
by varying the twist.  In this section we also discuss some conjectures.
In Section~\ref{secexamples} we discuss examples based on computations
by X.-F.~Roblot for certain Galois extensions of $ \Q $ with dihedral group $ G $
of order~8 and cyclic subgroup $ H $ of order~4.
They include the existence of two $ (H\times (1+\Z_5)) \ltimes \Z_5 $-extensions
of $ \Q(\sqrt{41}) $ and two $ (H\times (1+\Z_5)) \ltimes \Z_5[\sqrt5]$-extensions of $\Q(\sqrt{793})$,
which are inside
a $ (G\times (1+\Z_5)) \ltimes \Z_5^2 $-extension of $ \Q $ and
a $ (G\times (1+\Z_5)) \ltimes \Z_5[\sqrt5]^2 $-extension of $ \Q $ respectively;
see Remarks~\ref{funny} and~\ref{funny2}.
Finally, in Section~\ref{etnc_section}, we 
give the first application mentioned after Conjecture~\ref{introconjecture},
i.e., we show how the results from the preceding sections
imply certain cases of the equivariant Tamagawa number conjecture.

It is a pleasure to thank
Denis Benois,
Spencer Bloch, 
David Burns, John Coates, Matthias Flach, Ralph Greenberg,
Steve Lichtenbaum,
Thong Nguyen Quang Do,
Bernadette Perrin-Riou,
Xavier-Fran\c cois Roblot,
Alex\-ander Schmidt,
Michael Spiess,
and Sujatha for helpful discussions and correspondence.
We also wish to thank Xavier-Fran\c cois Roblot for performing the calculations for the examples
in Section~\ref{secexamples}.
Finally, we want to thank the referees for the detailed comments.

\section{Cofinite generation and additive Euler characteristics} \label{cofinite}

In this section we prove Theorem~\ref{cogentheorem}.  In the process, we also discuss some of the properties
of the cohomology groups that we shall use in later sections.  We note that the cohomology groups in the later
sections have coefficients with modified Tate twist indexed by $1-e$
instead of $ e $, hence use the twist $ 1-e $ also in this section
for the sake of consistency.

\begin{notation} \label{notation}

Unless stated otherwise, throughout this section $ k $ denotes a number field, $ p $ a prime number, and
$ E $ a finite extension of $ \Q_p $ with valuation ring $ \Oe $.
For $ \eta : G_k \to E $ an Artin character realizable over $ E $
we let $ \vlat{E}{\eta} $ be an Artin representation of $ G_k $
over $ E $ with character $ \eta $.
Then there exists an $\Oe$-lattice for $\eta$, i.e., a finitely generated torsion-free
$\Oe$-module $\lat{E}{\eta}$ 
with $ G_k $-action such that $\lat{E}{\eta} \otimes_{\Z_p} \Q_p \simeq \vlat{E}{\eta}$ as $ E[G_k]$-modules.
Let $\qlat{E}{\eta} = \lat{E}{\eta} \otimes_{\Z_p} \Q_p/\Z_p $. 

For a number field $F$ we write $F_\infty$ for the cyclotomic $\Z_p$-extension.
Fix a topological generator $\gamma_0$ of $\Gamma_0 = Gal(k_\infty/k)$.
Let $\op : G_k \to \mu_{2p}$ and $\psio : G_k \to 1 + 2p\Z_p$ be as in~\eqref{psi0def}, and
let $q_k = \psio(\tgammanought)$ in $1 + 2p\Z_p$, where $\tgammanought$ is a lift of $\gamma_0$ to $G_k$.
As defined around~\eqref{BEdef}, for any $ \Oe $-module $ A $ on which $G_k$ acts and 
$ e $ in $ \B_k(E) = \{ s \text{ in } E \text{ with } |s|_p < |q_k-1|_p^{-1} p^{-1/(p-1)}\} $,
we let $ A\mtt{e}$ be $ A $ with the action of $ G_k $ multiplied by the character $ (\psio)^e $.

Let $\Sig$ be the set of infinite places of $k$,
and $S$ a finite set of finite primes of $k$ containing the set
$ P $ of primes above $ p $, as well as all finite primes at which $\eta$ is ramified.  
For any subfield $ k' $ of an algebraic closure $ \kbar $ of $ k $ containing $ k $, we denote by
$ \O_{k',S} $ the ring of integers in $ \O_{k'} $ with all primes of $ \O_{k'} $ lying above primes
in $ S $ inverted.  For $ e $ in $ \B_k(E) $ we view
$ \qlatmtt{E}{\eta}{e}$ as a sheaf for the \'etale topology of $ \O_{k,S} $.
As in Remark~\ref{introremark}, we
let $\Omega_S$ denote the maximal extension of $k$ that is
unramified outside of $S \cup \Sig$ and let $G_S = \Gal(\Omega_S/k)$.

Finally, for any finite or infinite prime $ v $ of $ k $, we
let $ k_v $ denote the completion of $ k $ at $ v $.
\end{notation}

\begin{remark}\label{coeffremark}
(1)
If $ \eta $ is 1-dimensional then any $ \lat{E}{\eta} $ is isomorphic
as $ \Oe[G_k] $-module to $ \Oe $ with $ g $ in $ G_k $ acting as multiplication by $ \eta(g) $.
Hence in this case $ \qlat{E}{\eta} $ is always isomorphic to
$ \Oe \otimes_{\Z_p} \Q_p/\Z_p \simeq E/\Oe $ with this action of~$ G_k $.

(2)
Let $ K/k $ be a Galois extension with $ G = \Gal(K/k) \simeq \Z/p\Z $, $ M = \Oe[G] $,
$ M_1 = \Oe \sum_{g \in G} g \subset M $,
and $ M_2 = M/M_1 $, so that $ M_1 \oplus M_2 $ and $ M $ are
lattices for the same Artin character of $ G_k $.  If $ K \cap k_\infty =  k $
then $ W\mtt{1-e}^{G_k} $ is isomorphic with $ \Oe/(q_k^{1-e}-1) $
if $ e \ne 1 $ and $ E/\Oe $ if $ e=1 $,
$ W_1\mtt{1-e}^{G_k} \simeq W\mtt{1-e}^{G_k} $ under the natural
map, and $ W_2\mtt{1-e}^{G_k} \simeq \Oe/(p,q_k^{1-e}-1) $.
This shows the coefficients $ W $ are not necessarily  unique up to isomorphism
for Abelian characters that are not 1-dimensional.

(3)
Assume $ p \ne 2 $, $ E $ contains a primitive $ p $-th root of unity $ \xi_p $, and fix $ e $ in
$ \B_k(E) $.  Let $ K/k $ be a Galois extension with $ G = \Gal(K/k) $
isomorphic to the dihedral group of order $2p$.
Fix $ r $ and $ s $ in $ G $ with orders $ p $ and $ 2 $ respectively.
Consider the two actions of $ G $ on $ M=\Oe^2 $ by letting $ r $ act as either 
$ R_1 = \left({\xi_p \atop 0}{0\atop \xi_p^{-1}}\right)  $
or $ R_2 = \left({0 \atop 1}{-1\atop \xi_p+\xi_p^{-1}}\right)  $,
and $ s $ as $ \left({0\atop1}{1\atop0}\right) $.
Noting that $K \cap k_\infty = k$,
one easily checks that $ W\mtt{1-e}^{G_k} \simeq \Oe/(\xi_p-1) $  
for the first action but is trivial for the second.  
Therefore the coefficients $ W $ are not necessarily  unique up to isomorphism
for non-Abelian characters that are irreducible over $ \Qpbar $.
\end{remark}

\begin{remark}
If $\chi : G_k \to E$ is a 1-dimensional even Artin character of order prime to $p$,
then we shall see in the proof of Theorem~\ref{ebn}
that
$H^1(\O_{k,S},\qlatmtt{E}{\chi}{1-e})$ is isomorphic to the Pontryagin dual of $\oplus_j \Oe/(g_j(q_k^{1-e} - 1))$
for some distinguished polynomials $g_j(T)$ in $\Oe[T]$.
We remark here that in general even $H^0(\O_{k,S},\qlatmtt{E}{\chi}{1-e})$ cannot be isomorphic to the
Pontryagin dual of $ \oplus_j \Oe/(h_j(q_k^e))$ for non-zero polynomials $h_j(T)$ in $\Oe[T]$ because
either $h_j(q_k^e)$ is zero for some $e$ in
$\B_k(E)$ or $|h_j(q_k^e)|_p$ is a constant function on $\B_k(E)$.
This would contradict the example $ W_2\mtt{1-e}^{G_k} \simeq \Oe/(p, q_k^{1-e}-1) $
in Remark~\ref{coeffremark}(2).
\end{remark}

Recall that in the introduction we made the following definition
to describe the structure of the cohomology groups $H^i(\O_{k,S},\qlatmtt{E}{\eta}{1-e})$ for $i = 0,1,2$,
which, in general are not even finitely generated. 

\begin{definition} 
Let $\O$ be the valuation ring in a finite extension of $\Q_p$.
For an $\O$-module $A$ we denote 
its Pontryagin dual $\Hom_{\Z_p}(A,\Q_p/\Z_p)$ by $A^\vee$. Then $\O$ acts on $ A^\vee $ via its action on $A$.
We say that $A$ is cofinitely generated if $A^\vee$ is
a finitely generated $\O$-module, and in this case we denote by $\corank_{\O} A$ the $\O$-rank of $A^\vee$.
\end{definition}

\begin{remark} \label{exactness}
Note that $\Hom_{\Z_p}(\cdot, \Q_p/\Z_p)$ is an exact functor
on $ \Z_p $-modules, hence submodules and quotients of cofinitely generated modules
are cofinitely generated.
In particular, if $A$ is cofinitely generated then so is $H^0(G_S,A)$.
Moreover, if $A$ is finitely generated or cofinitely generated,
then by Pontryagin duality \cite[Theorem~1.1.11]{NSW2nded}
the natural inclusion of $ A $ into $ (A^\vee)^\vee $ is an isomorphism
because the $ \Q_p/\Z_p $-dual of its cokernel is trivial, hence
the cokernel is trivial.
\end{remark}

\begin{remark} \label{duality-remark}
Let $ e $ in $ \B_k(E) $.  We shall mostly use the \'etale cohomology groups
$ H^i(\O_{k,S}, \qlatmtt{E}{\eta}{1-e}) $, but they are
isomorphic to various other groups as we now discuss.

For $ m $ in $ \Z $, we may assume $ \vlat{E}{\eta} = \vlat{E}{\eta\op^m} $
as $E$-vectorspaces but with different $G_k$-actions.
Then choosing $ \lat{E}{\eta\op^m} = \lat{E}{\eta} $ we 
get $ \latmtt{E}{\eta}{e}(m) = \latmtt{E}{\eta\op^m}{e+m} $
and similarly for $ W $ instead of $M$.
We let $\lat{E}{\eta^\vee} = \Hom_{\Z_p}(\lat{E}{\eta},\Z_p)$ on which $G_k$
acts via the inverse of its action on $\lat{E}{\eta}$, so that
on $ \lat{E}{\eta^\vee} \otimes_{\Z_p} \Q_p $ this gives an Artin representation
of $ G_k $ with character $ \eta^\vee $.  We then have
$\qlat{E}{\eta^\vee} = \lat{E}{\eta^\vee} \otimes_{\Z_p} \Q_p/\Z_p \simeq \Hom_{\Z_p}(\lat{E}{\eta},\Q_p/\Z_p)$.

By \cite[Proposition~II.2.9]{Mil86} we have an isomorphism between
$H^i(\O_{k,S},X) $ and the continuous group cohomology group
$ H^i(G_S,X)$ for any finite $\Z_p$-module $X$ with continuous $G_S$-action.
This isomorphism is natural, so taking direct limits gives
$H^i(\O_{k,S},\qlatmtt{E}{\eta}{1-e}) \simeq H^i(G_S,\qlatmtt{E}{\eta}{1-e})$.
By \cite[Theorem~8.3.20(i)]{NSW2nded}
the $H^i(G_S,X)$ are finite for any such
$X$ and $i \ge 0$, so by \cite[Corollary, p.261]{Tate76} we have
$H^i(G_S, \latmtt{E}{\eta}{1-e}) \simeq \varprojlim_n H^i(G_S, \latmtt{E}{\eta}{1-e}/p^n)$
for $i \ge 1$, and for $i = 0$ this is obvious.
Similarly
$\varprojlim_n^1 H^1(\O_{k,S},\latmtt{E}{\eta}{1-e}/p^n) \simeq \varprojlim_n^1 H^1(G_S,\latmtt{E}{\eta}{1-e}/p^n)$
is trivial, so if $H_{cts}^i$ denotes Jannsen's continuous \'etale cohomology \cite{Jan88}, then
it follows from (3.1) in loc.\ cit., that for all $i$,
\begin{alignat*}{1}
 H_{cts}^i(\O_{k,S}, \latmtt{E}{\eta}{1-e})
& \simeq
 \varprojlim_n H^i(\O_{k,S}, \latmtt{E}{\eta}{1-e}/p^n)
\\
& \simeq
 H^i(G_S, \latmtt{E}{\eta}{1-e})
\,.
\end{alignat*}

We shall see in Proposition~\ref{cofingen} that
$ H^{3-i}(\O_{k,S},\qlatmtt{E}{\eta^\vee \op}{1-e}) $ is cofinitely
generated.
Let $H_c^i(\O_{k,S},\cdot)$ denote the $i$-th cohomology with compact support as
in \cite[Section~II \S~2, p.203]{Mil86},
so that $H_c^i(\O_{k,S},\cdot) \simeq H_{cts}^i(\O_{k,S}, \cdot) $ when $ p\ne 2 $.
Let $\alpha: \Spec \O_{k,S} \rightarrow \Spec \O_k$ be the natural inclusion.
Then by \cite[II, Proposition~2.3(d) and Corollary~3.3]{Mil86} and \cite[Corollary, p.261]{Tate76},
we have
\begin{equation} \label{alpha_duality}
\begin{aligned}
       & \, \varprojlim_n H_c^i(\O_k,\alpha_! (\latmtt{E}{\eta}{e}/p^n)) \\
\simeq & \, \varprojlim_n H_c^i(\O_{k,S},\latmtt{E}{\eta}{e}/p^n) \\
\simeq & \, \varprojlim_n H^{3-i}(\O_{k,S},\Hom(\latmtt{E}{\eta}{e}/p^n, \mu_{p^\infty}))^\vee \\
\simeq & \, H^{3-i}(\O_{k,S},\varinjlim_n\Hom(\latmtt{E}{\eta}{e}/p^n, \mu_{p^\infty}))^\vee \\
\simeq & \, H^{3-i}(\O_{k,S},\Hom(\latmtt{E}{\eta}{e}, \mu_{p^\infty}))^\vee \\
\simeq & \, H^{3-i}(\O_{k,S},\qlatmtt{E}{\eta^\vee \op}{1-e})^\vee
\,,
\end{aligned}
\end{equation}
for $e$ in $\B_k(E)$, $i = 0$, $1$, $2$ and $3$.

If 
$ H^j(\O_{k,S},\qlatmtt{E}{\eta}{1-e}) $ is finite for $ j=i,i+1 $,
then it will follow from the proof of Proposition~\ref{cofingen}
that
$ H^i(\O_{k,S},\qlatmtt{E}{\eta}{1-e}) \simeq H_{cts}^{i+1}(\O_{k,S},\latmtt{E}{\eta}{1-e}) $.
So, if $ p \ne 2 $ and all $H^i(\O_{k,S},\qlatmtt{E}{\eta}{e})$ are finite,
then
$H^i(\O_k, \alpha_! \qlatmtt{E}{\eta}{e}) \simeq H^{2-i}(\O_{k,S}, \qlatmtt{E}{\eta^\vee \op}{1-e})^\vee$
for $i = 0,1,2$ and both sides are trivial for $i \ge 3$ (cf. \cite[Theorem~3.2]{Co-Li}).
\end{remark}

\begin{remark} \label{coh_dim}
If $p \neq 2$ then
by \cite[Proposition~8.3.18]{NSW2nded}
we have $H^i(\O_{k,S},X) \simeq H^i(G_S,X) = 0$
for $i \geq 3$ and any finite $G_S$-module $X$.  By taking filtered direct limits it follows that
$H^i(\O_{k,S},\qlatmtt{E}{\eta}{1-e})$
is trivial for $i \geq 3$. 
But for $ p=2 $ and $ i \ge 3 $ we have, by \cite[8.6.10(ii)]{NSW2nded}
and a direct limit argument, that
\begin{alignat*}{1}
 H^i(\O_{k,S},\qlatmtt{E}{\eta}{1-e}) & \simeq H^i(G_S,\qlatmtt{E}{\eta}{1-e})
\\
&  \simeq \bigoplus_{v\in\Sig} H^i(k_v,\qlatmtt{E}{\eta}{1-e})
\,.
\end{alignat*}
\end{remark}

We now start the proof of Theorem~\ref{cogentheorem}.
In the proof of Proposition~\ref{cofingen}
below we shall use results by Tate \cite{Tate62}, \cite{Tate76} and Jannsen \cite{Jan88},
although we shall use \cite{NSW2nded} as a reference instead of \cite{Tate62}.

\begin{lemma} \label{Gcofin}
Let $ \O $ be the valuation ring of a finite extension of $ \Q_p $.
If $ A $ is a (co)finitely generated $ \O $-module on which a
finite group $ G $ acts, then each $ H^i(G,A) $ is (co)finitely generated and the number of (co)generators
needed
can be bounded in terms of $\# G$, $i$ and the number of (co)generators of $A$.
In particular, if $i > 0$ then $ \# H^i(G,A) $ is bounded in terms of the same quantities.
\end{lemma}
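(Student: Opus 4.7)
The plan is to exhibit $H^i(G,A)$ as a subquotient of $A^N$ for some $N = N(i,\#G)$ depending only on $i$ and $\#G$. This is achieved by the standard bar resolution of $\Z$ over $\Z[G]$, whose $i$-th term is a free $\Z[G]$-module on $G^i$; applying $\Hom_{\Z[G]}(\cdot, A)$ yields a cochain complex with $C^i \simeq A^{(\#G)^i}$, and $H^i(G,A)$ is by definition a subquotient thereof.

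Next I would verify that (co)finite generation and the bound on the number of (co)generators pass to subquotients. For the finitely generated case this is standard: $\O$ is a discrete valuation ring, so a surjection $\O^d \twoheadrightarrow M$ displays any submodule of $M$ as a quotient of a submodule of $\O^d$, which is itself free of rank at most $d$, and any quotient of $M$ is clearly $d$-generated. The cofinitely generated case follows by Pontryagin duality: since $\Hom_{\Z_p}(\cdot,\Q_p/\Z_p)$ is exact on discrete $\Z_p$-modules and interchanges the roles of submodules and quotients, a sub- or quotient $\O$-module of a cofinitely generated $\O$-module is cofinitely generated with at most as many cogenerators. Combining this with the first step yields the desired bound $d \cdot (\#G)^i$ on the number of (co)generators of $H^i(G,A)$.

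For the final assertion that $\#H^i(G,A)$ is bounded when $i > 0$, I would invoke the classical fact that multiplication by $\#G$ annihilates $H^i(G,A)$. If $\gcd(\#G,p) = 1$ then $\#G$ is a unit in $\O$ and $H^i(G,A) = 0$; otherwise $p \mid \#G$ and $\O/(\#G)\O$ is a finite ring. In the finitely generated case $H^i(G,A)$ is then a module over this finite ring with a bounded number of generators, hence of bounded cardinality. In the cofinitely generated case, the Pontryagin dual $H^i(G,A)^{\vee}$ is finitely generated and $\#G$-torsion, hence finite with bounded cardinality, and therefore so is $H^i(G,A)$ itself. The result itself is classical; the only mild obstacle is the bookkeeping needed to confirm that the number of (co)generators is controlled uniformly through each subquotient step.
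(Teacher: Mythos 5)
Your proof is correct and follows essentially the same route as the paper: realize $H^i(G,A)$ as a subquotient of the standard (bar) cochain group $\Hom_{\O[G]}(\O[G^{i+1}],A)\simeq A^{(\#G)^i}$, note that (co)finite generation and the (co)generator bound pass to subquotients (via Pontryagin duality in the cofinite case), and conclude finiteness and boundedness for $i>0$ from the fact that $\#G$ annihilates $H^i(G,A)$. The extra details you supply (submodules over a DVR, the finite ring $\O/(\#G)\O$) are exactly what the paper leaves implicit.
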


\begin{proof}
Since $A$ is (co)finitely generated and $G$ is finite it follows that
each $\Hom_{\O[G]}(\O[G^{i+1}],A)$ is
(co)finitely generated with the number of (co)generators needed
bounded by
a constant depending only on $\#G$, $i$ and the number of (co)generators of $A$.
The lemma now follows immediately from the
definition of $H^i(G,A)$ because $\# G$ annihilates $H^i(G,A)$ if $i > 0$.
\end{proof}

\begin{definition}
For any $a \ne 0 $ in $\Oe$ we let
\begin{equation*}
 \klat{a}{E}{\eta} =  \ker(\qlat{E}{\eta} \stackrel{a}{\to} \qlat{E}{\eta})
\,,
\end{equation*}
so that for $ e $ in $ \B_k(E) $ we have a short exact sequence
\begin{equation}\label{texact}
 0 \to \klatmtt{a}{E}{\eta}{1-e} \to \qlatmtt{E}{\eta}{1-e} \overset{a}{\to} \qlatmtt{E}{\eta}{1-e} \to 0 
\,.
\end{equation}
\end{definition}

\begin{proposition} \label{cofingen}
Let $ k $ be a number field, $ p $ a prime number, $ E $ a finite extension of $ \Q_p $
with valuation ring $ \Oe $,
$ \eta : G_k \to E $ an Artin character, and $ e $ in $ \B_k(E) $.
Assume that $ S $ contains $ P $ as well as all the finite primes of $ k $ at which $ \eta $ is ramified.
Then the following hold.
\begin{enumerate}
\item For $i \ge 0$, $H^i(\O_{k,S},\qlatmtt{E}{\eta}{1-e})$ is a cofinitely generated $\Oe$-module.
\item There is some constant $D = D(S,\eta,k)$ depending only on $S$, $\eta$ and $k$, but not on $e$, $E$, or
the choice of the lattice $\lat{E}{\eta}$, such that each
$H^i(\O_{k,S},\qlatmtt{E}{\eta}{1-e})$ can be cogenerated by at most $D$ elements.
\end{enumerate}
\end{proposition}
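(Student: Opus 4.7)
The plan is to apply the long exact sequence coming from~\eqref{texact} with $a = \pi$ a uniformizer of $\Oe$, and then invoke Nakayama's lemma. Writing $M_i = H^i(\O_{k,S}, \qlatmtt{E}{\eta}{1-e})$, the sequence yields a surjection $H^i(\O_{k,S}, \klatmtt{\pi}{E}{\eta}{1-e}) \twoheadrightarrow M_i[\pi]$, and upon Pontryagin dualizing, $M_i^\vee/\pi M_i^\vee \simeq (M_i[\pi])^\vee$. Since $\klatmtt{\pi}{E}{\eta}{1-e}$ is a finite $G_S$-module, its $i$-th cohomology is finite by \cite[Theorem~8.3.20(i)]{NSW2nded} for $i \leq 2$, and by Remark~\ref{coh_dim} combined with local considerations for $i \geq 3$ when $p = 2$. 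Hence $M_i[\pi]$ is finite, Nakayama gives that $M_i^\vee$ is a finitely generated $\Oe$-module, and its minimum number of generators equals the residue-field dimension of $M_i[\pi]$. This proves part~(1).

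For part~(2), the crucial observation is that $\klatmtt{\pi}{E}{\eta}{1-e} \simeq \lat{E}{\eta}/\pi$ as $\Oe[G_k]$-modules, independent of $e$. Indeed, $\psio$ takes values in $1 + 2p\Z_p \subset 1 + \pi\Oe$, and the convergence hypothesis defining $\B_k(E)$ forces $\psio(g)^e \in 1 + \pi\Oe$ as well, so the twist by $(\psio)^{1-e}$ acts trivially modulo~$\pi$. It therefore suffices to bound the residue-field dimension of $H^i(\O_{k,S}, \lat{E}{\eta}/\pi)$ uniformly in $E$ and in the lattice. The action of $G_k$ on $\lat{E}{\eta}$ factors through $H = \Gal(K/k)$ for a finite Galois extension $K/k$ depending only on $\eta$, and $K \subseteq \Omega_S$ because $\eta$ is unramified outside $S$. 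The Hochschild--Serre spectral sequence
\[
E_2^{p,q} = H^p(H, H^q(\Gal(\Omega_S/K), \lat{E}{\eta}/\pi)) \Rightarrow H^{p+q}(\O_{k,S}, \lat{E}{\eta}/\pi),
\]
combined with the facts that $\Gal(\Omega_S/K)$ acts trivially on $\lat{E}{\eta}/\pi$ and $\dim_{\Oe/\pi} \lat{E}{\eta}/\pi = \eta(\id_{\kbar})$, reduces us to bounding $\dim_{\F_p} H^q(\Gal(\Omega_S/K), \F_p)$. This last quantity is a finite constant depending only on $K$, $S$ and $p$ (essentially the $p$-rank of certain arithmetic invariants of $K$), and is entirely independent of $E$. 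The outer cohomology with respect to the finite group $H$ contributes only a bounded multiplicative factor, via a resolution of $\F_p$ by finite-rank free $\F_p[H]$-modules, yielding a bound $D_i = D_i(S, \eta, k)$.

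The main obstacle is obtaining a bound independent of $E$, and it is bypassed precisely by passing to the mod~$\pi$ coefficients, where the diamond twist disappears and the resulting cohomology depends essentially only on $\eta$ through the fixed finite extension $K/k$. For $p = 2$ and $i \geq 3$ the local splitting in Remark~\ref{coh_dim} reduces the problem to analogous bounds at each infinite place, handled by the same kind of Hochschild--Serre argument applied to each finite local Galois group. Taking $D$ to be the maximum of $D_i$ over the finitely many relevant degrees yields the uniform bound asserted in~(2).
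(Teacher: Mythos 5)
Your proof is correct in substance, and it does not follow the paper's route everywhere. For part~(1) the paper never argues on the torsion level: it first shows that the groups $H^i(G_S,\latmtt{E}{\eta}{1-e})$ are finitely generated (finiteness of $H^i/p$ together with the Corollary on p.~260 of \cite{Tate76}) and then passes to $H^i(\O_{k,S},\qlatmtt{E}{\eta}{1-e})$ via Jannsen's exact sequence \cite[Theorem~5.14(a)]{Jan88}; your dual--Nakayama argument bypasses continuous cohomology entirely and is more elementary. For part~(2) your strategy is essentially the paper's --- trivialize the twist on finite coefficients, then Hochschild--Serre over the field cut out by $\eta$, using finiteness of cohomology with $\F_p$-coefficients for the inner group and a bound for the outer finite group depending only on its order (as in Lemma~\ref{Gcofin}) --- but your reduction modulo $\pi$ instead of modulo $p$ is a genuine simplification: since $\psio(g)^{1-e}$ lies in $1+\pi\Oe$ for every $e$ in $\B_k(E)$, the diamond twist is trivial modulo $\pi$ on all of $G_k$, so you may take $K=k_\eta$, whereas the paper, working with $\klat{p}{E}{\eta}$, has to pass to $K=k_\eta(\mu_{p^m})$ for a suitable $m$ depending on $k$ in order to trivialize the twist modulo $p$. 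Your bound is visibly independent of $e$, of $E$ (the inner cohomology is $H^b(\Gal(\Omega_S/K),\F_p)\otimes_{\F_p}(\Oe/\pi)^{\eta(\id_{\kbar})}$), and of the lattice (the $\Gal(K/k)$-module structure of $\lat{E}{\eta}/\pi$ does depend on the lattice, but your resolution bound uses only $\#\Gal(K/k)$ and the $\Oe/\pi$-dimension), and the degrees $i\ge 3$ are uniformly controlled (vanishing for $p\ne 2$, the local description of Remark~\ref{coh_dim} for $p=2$), so a single $D(S,\eta,k)$ results, exactly as in the paper.

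Two points should be made precise. The Nakayama step in part~(1) requires the topological Nakayama lemma for compact $\Oe$-modules, not the ordinary one (which presupposes finite generation): you must note that $H^i(\O_{k,S},\qlatmtt{E}{\eta}{1-e})$ is a discrete torsion module, e.g.\ because it equals $\varinjlim_n H^i(\O_{k,S},\klatmtt{p^n}{E}{\eta}{1-e})$, so that its Pontryagin dual is profinite; finiteness of the $\pi$-torsion alone does not force cofinite generation (the $\Oe$-module $E$ has trivial $\pi$-torsion but enormous dual). Second, the finiteness of $\dim_{\F_p}H^q(\Gal(\Omega_S/K),\F_p)$ is exactly \cite[Theorem~8.3.20(i)]{NSW2nded} applied to $K$ and the primes above $S$ (after identifying $\Gal(\Omega_S/K)$ with the corresponding group over $K$), the same input the paper uses, so it should be cited rather than merely asserted.
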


\begin{proof}
Write $M$ and $W$ for $\lat{E}{\eta}$ and $\qlat{E}{\eta}$ respectively.
From the long exact sequence associated to $0 \to M \mtt{1-e} \stackrel{p}{\to} M \mtt{1-e} \to M \mtt{1-e}/pM \mtt{1-e} \to 0$ we see that,
for $i \ge 0$, $H^i(G_S,M \mtt{1-e})/p$ is finite since $H^i(G_S,M \mtt{1-e}/pM \mtt{1-e})$ is finite by
\cite[Theorem~8.3.20(i)]{NSW2nded}.
Hence $H_{cts}^i(\O_{k,S},M \mtt{1-e}) \simeq H^i(G_S,M \mtt{1-e})$ is finitely generated for $i \ge 0$ by 
\cite[Corollary, p.260]{Tate76}.  
From~\cite[Theorem~5.14(a)]{Jan88} we have an exact sequence
\begin{alignat*}{1}
\cdots & \to H_{cts}^i(\O_{k,S},M \mtt{1-e}) \to H_{cts}^i(\O_{k,S},M \mtt{1-e}) \otimes_{\Z_p}\Q_p
\\
& \to H^i(\O_{k,S},W \mtt{1-e}) 
\to H_{cts}^{i+1}(\O_{k,S},M \mtt{1-e}) \to \cdots \,,
\end{alignat*}
so it follows that $H^i(\O_{k,S},W \mtt{1-e})$ is cofinitely generated for $i \ge 0$.  This proves part~(1).

Consider the long exact sequence associated to \eqref{texact} with $a=p$, so that
$H^i(\O_{k,S},W \mtt{1-e})[p]$ is a quotient of
$H^i(\O_{k,S},W [p] \mtt{1-e})$ for $i \ge 0$.
Since $p$ kills $W [p] \mtt{1-e}$, there exists
an $ m $ depending only on $ k $ such that,
with $K=k_\eta(\mu_{p^m}) \subset \Omega_S$,
$W [p] \mtt{1-e} \simeq (\Oe/(p))^{\eta(\id_{\kbar})}$
as $\Gal(\Omega_S/K)$-modules for all $e$ in $\B_k(E)$.
Note that this isomorphism is independent of the choice of $M$.
By \cite[Theorem~8.3.20(i)]{NSW2nded},
each $H^j(\Gal(\Omega_S/K), (\Oe/(p))^{\eta(\id_{\kbar})})$
is finite and so the number of its generators can be bounded by a
constant that depends only on $S$, $k$ and $\eta(\id_{\kbar})$ and not on $e$ or $E$.
By Lemma~\ref{Gcofin} the number of generators of $H^i(\Gal(K/k),H^j(\Gal(\Omega_S/K),W [p] \mtt{1-e}))$
can be bounded by a constant that depends only on $S$, $\eta$, $i$, $j$ and $k$.
By the Hochschild-Serre spectral sequence and Remark~\ref{coh_dim} it follows that
the number of generators of $H^i(\O_{k,S},W [p] \mtt{1-e}) \simeq H^i(G_S,W [p] \mtt{1-e})$
can be bounded by a constant that depends only on $S$, $\eta$ and $k$.
This shows that there is a constant $D = D(S,\eta,k)$ depending only on $S$, $\eta$ and $k$ such that each
$H^i(\O_{k,S},W \mtt{1-e})[p]$, and hence each $H^i(\O_{k,S},W \mtt{1-e})$ can be cogenerated by at most $D$ elements.
\end{proof}

This proves part~(1) of Theorem~\ref{cogentheorem}.
We now start the proof of the remaining parts,
obtaining various results for later use along the way.

\begin{remark} \label{Mremark}
(1)
If $M$ and $M'$ are $\Oe$-lattices for $\eta$, then we can find an injection from $ M $ to $ M' $ with
finite cokernel.  Tensoring with $\Q_p/\Z_p$, we get a short exact sequence
\begin{equation}\label{sos}
 0 \to X_E \to M \otimes_{\Z_p} \Q_p/\Z_p \to M' \otimes_{\Z_p} \Q_p/\Z_p \to 0
\end{equation}
of sheaves for the \'etale topology on $\Spec(\O_{k,S})$  with $X_E$ finite.
Each $H^i(\O_{k,S},X_E \mtt{1-e})$ is finite by
\cite[Theorem~8.3.20(i)]{NSW2nded},
so the groups
$H^i(\O_{k,S}, M \otimes_{\Z_p} \Q_p/\Z_p\mtt{1-e})$ and
$H^i(\O_{k,S}, M' \otimes_{\Z_p} \Q_p/\Z_p\mtt{1-e})$ have the
same corank.

(2)
Suppose that $E'/E$ is a finite extension and let $\Oep$ denote the valuation ring of $E'$.  Then
$M' = M \otimes_{\Oe} \Oep$ is an $\Oep$-module of finite $\Oep$-rank on which $G_k$ acts
such that $M' \otimes_{\Oep} E' \simeq V\otimes_E E'$.  
Because $ M' \simeq M^{[E':E]} $ as $ \Oe[G] $-modules, we have,
with $ W = M \otimes_{\Z_p} \Q_p/\Z_p $ and $ W' = M' \otimes_{\Z_p} \Q_p/\Z_p \simeq W^{[E':E]}$,
\begin{equation*}
H^i(\O_{k,S}, W'\mtt{1-e}) \simeq
H^i(\O_{k,S}, W\mtt{1-e}) ^{[E':E]}
\end{equation*}
for all $e$ in $\B_k(E)$.

(3)
Consider two Artin characters $\eta_1, \eta_2 : G_k \to E$ that
are realizable over $E$, and let
$M_j$ for $ j=1,2 $ be corresponding torsion-free $\Oe$-lattices.
Then $M_1 \oplus M_2$ corresponds to the character $\eta_1 + \eta_2$,
and with $W_j = M_j \otimes_{\Z_p} \Q_p/\Z_p$ for $j=1,2$, we
have
\begin{equation*}
 H^i(\O_{k,S}, (W_1 \oplus W_2)\mtt{1-e}) \simeq H^i(\O_{k,S}, W_1\mtt{1-e}) \times H^i(\O_{k,S}, W_2\mtt{1-e})
\,.
\end{equation*}

(4) Let $k'/k$ be a finite extension, $\eta'$ an Artin character of $G_{k'}$ that is realizable over $E$, and
$M'$ an $\Oe$-lattice for $\eta'$.  Then
$M = \Oe[G_k] \otimes_{\Oe[G_{k'}]} M'$ is an
$\Oe$-lattice for the Artin character $\eta=\Ind_{k'}^k (\eta')$ of $G_k$.  If $ S $ is so that
$ \eta $ (and hence $ \eta' $) is unramified outside of $ S $, then
by \cite[Proposition~1.6.4]{NSW2nded}
we have that
$H^i(\O_{k,S}, M \otimes_{\Z_p} \Q_p/\Z_p\mtt{1-e}) \simeq
H^i(\O_{k',S},  M' \otimes_{\Z_p} \Q_p/\Z_p\mtt{1-e})$.

(5)
Suppose that $S'$ is a finite set of primes of $k$ containing $S$.
Taking $\lat{E}{\eta \op^{-1}} = \lat{E}{\eta} $ as $ \Oe $-modules but
with different $ G_k $-action, localization with respect to
$Z = \Spec(\O_{k,S}) \setminus \Spec(\O_{k,S'})$ gives an exact Gysin sequence
(cf.\ the proof of VI Corollary 5.3, VI Remark~5.4(b) of \cite{Mil80}, and 
\cite[Expos\'e~I, Theorem~5.1]{SGA5}),
giving
an isomorphism
$ H^0(\O_{k,S},\qlatmtt{E}{\eta}{1-e})$  $ \simeq H^0(\O_{k,S'},\qlatmtt{E}{\eta}{1-e}) $
and an exact sequence
\begin{equation}\label{gysin}
\begin{aligned}
0 & \to  H^1(\O_{k,S},\qlatmtt{E}{\eta}{1-e}) \to
H^1(\O_{k,S'},\qlatmtt{E}{\eta}{1-e})
\\
& \to H^0(Z,\qlatmtt{E}{\eta \op^{-1}}{-e})
  \to  H^2(\O_{k,S},\qlatmtt{E}{\eta}{1-e})
\\
& \to
H^2(\O_{k,S'},\qlatmtt{E}{\eta}{1-e}) \to H^1(Z,\qlatmtt{E}{\eta \op^{-1}}
{-e}) \to 0
\,.
\end{aligned}
\end{equation}
Note that $ H^3(\O_{k,S},\qlatmtt{E}{\eta}{1-e}) \to H^3(\O_{k,S'},\qlatmtt{E}{\eta}{1-e})$ is an
isomorphism by Remark~\ref{coh_dim}.

We observe that
\begin{equation*}
H^i(Z,\qlatmtt{E}{\eta \op^{-1}}{-e}) \simeq \bigoplus_{v \in S' \setminus S} \, H^i(\Gal(\Fbar_v/\F_v),\qlatmtt{E}{\eta \op^{-1}}{-e})
\,,
\end{equation*}
where $\F_v$ is the residue field at $v$.
Because $\Gal(\Fbar_v/\F_v)$ is topologically generated by the Frobenius $\Fr_v$, we have
\begin{alignat*}{1}
 H^0(\Gal(\Fbar_v/\F_v),\qlatmtt{E}{\eta \op^{-1}}{-e})
& \simeq
 \ker(1-\Fr_v | \qlatmtt{E}{\eta \op^{-1}}{-e})
\\
\intertext{and}
 H^1(\Gal(\Fbar_v/\F_v),\qlatmtt{E}{\eta \op^{-1}}{-e})
& \simeq
 \coker(1-\Fr_v | \qlatmtt{E}{\eta \op^{-1}}{-e})
\,.
\end{alignat*}
But
$\ker(1-\Fr_v | \vlatmtt{E}{\eta \op^{-1}}{-e}) \simeq \ker(1-\mtt{\Nm(v)}^{-e} \Fr_v | \vlat{E}{\eta \op^{-1}})$
because $\eta \op^{-1}$ is unramified at all $v$ in $Z$ and $ \psip(\Fr_v) = \Nm(v) $ in $ \Z_p^\times $,
with $ \psip $ as in~\eqref{psi0def}.  This group is trivial if $e \ne 0$ because all the eigenvalues
of $\Fr_v$ are roots of unity and $\mtt{\Nm(v)}^e$ is a root of unity only when $e = 0$.
Applying the snake lemma to $1 - \Fr_v$ acting on the short exact sequence
\begin{equation*}
 0
\to
 \latmtt{E}{\eta \op^{-1}}{-e}
\to
 \vlatmtt{E}{\eta \op^{-1}}{-e}
\to
 \qlatmtt{E}{\eta \op^{-1}}{-e}
\to
 0
\,,
\end{equation*}
we find that $\corank_{\Oe} H^i(\Gal(\Fbar_v/\F_v),\qlatmtt{E}{\eta \op^{-1}}{-e})$, for $i=0,1$,
is trivial when $e \ne 0$, and equals $\dim_E \vlat{E}{\eta \op^{-1}}^{\Fr_v = 1}$ otherwise.
If this corank is trivial then $1-\Fr_v$ is an isomorphism on $\vlatmtt{E}{\eta \op^{-1}}{-e} $,
so $ H^1(\Gal(\Fbar_v/\F_v),\qlatmtt{E}{\eta \op^{-1}}{-e}) $ is trivial and
\begin{equation} \label{h0eul}
\begin{aligned}
& \# H^0(\Gal(\Fbar_v/\F_v),\qlatmtt{E}{\eta \op^{-1}}{-e})
\\
= & \,
 |\det(1- \mtt{\Nm(v)}^{-e} \Fr_v |  V(E,\eta \op^{-1}))|_p^{-[E:\Q_p]}
\,.
\end{aligned}
\end{equation}
\end{remark}

In Section~\ref{EC} we shall use the following proposition for Conjecture~\ref{conjecture}.
Here we let $\cork{i,S}{1-e}{\eta} = \corank_{\Oe} H^i(\O_{k,S},\qlatmtt{E}{\eta}{1-e})$,
just as in Theorem~\ref{introtheorem}.

\begin{proposition} \label{gysinprop}
Let $k$ be a number field, $\eta$ an Artin character, $S$ and $S'$ finite set of primes of $k$ such that
$S \subseteq S'$, and $e$ in $\B_k(E)$.
Then
\begin{enumerate}
\item
the map $ H^2(\O_{k,S},\qlatmtt{E}{\eta}{1-e}) \to  H^2(\O_{k,S'},\qlatmtt{E}{\eta}{1-e}) $
in~\eqref{gysin} has finite kernel;

\item
$\cork{1,S'}{1-e}{\eta} - \cork{1,S}{1-e}{\eta}=\cork{2,S'}{1-e}{\eta} - \cork{2,S}{1-e}{\eta}$,
which equals
$\sum_{v \in S' \setminus S} \dim_E \vlat{E}{\eta \op^{-1}}^{\Fr_v=1}$ if
$ e=0 $, and $ 0 $ if $ e\ne0 $.

\end{enumerate}
\end{proposition}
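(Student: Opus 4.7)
The plan is to derive both parts from the six-term Gysin exact sequence~\eqref{gysin} of Remark~\ref{Mremark}(5), together with the corank computations given there for the residue-field terms. Abbreviate $H^i(\O_{k,S}) = H^i(\O_{k,S},\qlatmtt{E}{\eta}{1-e})$ and $H^i(Z) = H^i(Z,\qlatmtt{E}{\eta\op^{-1}}{-e})$. By Proposition~\ref{cofingen} every term in~\eqref{gysin} is cofinitely generated over $\Oe$, so additivity of coranks applies throughout.

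I would first establish the equality asserted in part~(2). Applying the snake lemma to multiplication by $1-\Fr_v$ on the short exact sequence $0 \to M \to V \to W \to 0$, as spelled out in Remark~\ref{Mremark}(5), gives $\corank H^0(Z) = \corank H^1(Z)$. Taking the alternating sum of $\Oe$-coranks in~\eqref{gysin} makes these two terms cancel, yielding
\[
\cork{1,S'}{1-e}{\eta} - \cork{1,S}{1-e}{\eta} = \cork{2,S'}{1-e}{\eta} - \cork{2,S}{1-e}{\eta}\,.
\]
Remark~\ref{Mremark}(5) further identifies the common corank of $H^0(Z)$ and $H^1(Z)$ as $\sum_{v \in S'\setminus S} \dim_E \vlat{E}{\eta \op^{-1}}^{\Fr_v = 1}$ when $e = 0$ and as $0$ when $e \ne 0$.

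For $e \ne 0$ the groups $H^0(Z)$ and $H^1(Z)$ are cofinitely generated of corank~$0$, hence finite; the kernel in~(1), being a quotient of $H^0(Z)$, is therefore finite, and the common difference in~(2) equals~$0$. The main obstacle is the case $e = 0$: here $H^0(Z)$ can be infinite and the finiteness claim in~(1) is nontrivial, being equivalent to the statement that the map $H^1(\O_{k,S'}) \to H^0(Z)$ in~\eqref{gysin} has image of maximal $\Oe$-corank. To prove this I would run the analogous Gysin sequence for Jannsen's continuous cohomology with lattice coefficients $\latmtt{E}{\eta}{1}$ (and $\lat{E}{\eta\op^{-1}}$ on $Z$), tensor with $\Q_p$ to obtain an exact sequence of $E$-vector spaces whose dimensions match the target coranks, and show by a local-to-global lifting argument that the induced $E$-linear boundary $\bigoplus_{v} \vlat{E}{\eta \op^{-1}}^{\Fr_v = 1} \to H^2_{cts}(\O_{k,S},\latmtt{E}{\eta}{1}) \otimes_{\Z_p} \Q_p$ vanishes. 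Granted this, the image of the connecting map $\delta\colon H^0(Z) \to H^2(\O_{k,S})$ has $\Oe$-corank zero and, being a cofinitely generated quotient of $H^0(Z)$, is therefore finite; this proves~(1), and the value asserted in~(2) at $e = 0$ then follows immediately from~\eqref{gysin}.
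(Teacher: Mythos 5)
Your reduction is sound as far as it goes: the case $e \ne 0$ and the formal deduction of part~(2) from part~(1) via the alternating sum of coranks in~\eqref{gysin} and the equality $\corank H^0(Z) = \corank H^1(Z)$ from Remark~\ref{Mremark}(5) are exactly right, and you correctly isolate the only real issue, namely that at $e=0$ the connecting map $\delta \colon H^0(Z,\qlatmtt{E}{\eta\op^{-1}}{0}) \to H^2(\O_{k,S},\qlatmtt{E}{\eta}{1})$ has finite image. But at that point your proposal stops being a proof: the phrase ``show by a local-to-global lifting argument that the induced $E$-linear boundary $\bigoplus_v \vlat{E}{\eta\op^{-1}}^{\Fr_v=1} \to H^2_{cts}(\O_{k,S},\latmtt{E}{\eta}{1})\otimes_{\Z_p}\Q_p$ vanishes'' is precisely the statement to be proved, restated in continuous cohomology, and no argument is supplied. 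This vanishing is not formal; it encodes a genuine arithmetic input. Concretely, after identifying $\qlatmtt{E}{\eta}{1}$ with $\qlat{E}{\eta\op^{-1}}(1)$, what is needed is that the localization/boundary map from $H^1$ with a Tate twist to the unramified $H^0$ along the primes of $S'\setminus S$ is surjective up to finite index, and the only reason this holds is Kummer theory together with the finiteness of the class group. Nothing in your sketch provides this input or a substitute for it, so the key step is a gap rather than a routine verification.

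For comparison, the paper's proof of exactly this point proceeds as follows: since the map $H^2(\O_{k,S},\qlat{E}{\eta\op^{-1}}(1)) \to H^2(\O_{k,S'},\qlat{E}{\eta\op^{-1}}(1))$ factors through $H^2(G_k,\qlat{E}{\eta\op^{-1}}(1))$, it suffices to bound the kernel of the map to Galois cohomology; one then passes to a finite Galois extension $K/k$ splitting $\eta\op^{-1}$, using the Hochschild--Serre spectral sequence and Lemma~\ref{Gcofin} (together with cofinite generation from Theorem~\ref{cogentheorem}(1) and Remark~\ref{Mremark}(1)) to reduce to the trivial character, i.e.\ to showing that $H^2(\O_{K,S},\Q_p/\Z_p(1)) \to H^2(G_K,\Q_p/\Z_p(1))$ has finite kernel. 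There the boundary map $H^1(G_K,\Z/p^n\Z(1)) \to H^0(Z,\Z/p^n\Z)$ is, by Kummer theory, the valuation map on $K^\times/(K^\times)^{p^n}$, whose cokernel is bounded by the class number of $K$ independently of $n$; passing to the limit gives finite cokernel, hence the required finiteness. If you want to salvage your version, you must prove your claimed vanishing by an argument of this type (reduction to the trivial character plus Kummer theory and class-group finiteness, or an equivalent such as Poitou--Tate duality with a known corank computation); as written, the proposal assumes the crux of Proposition~\ref{gysinprop}(1).
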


\begin{proof}
From Remark~\ref{Mremark}(5) we see this holds for $e \ne 0$, and that for $e = 0$ it is sufficient to prove part~(1).
We may assume $ \qlatmtt{E}{\eta}{1} = \qlat{E}{\eta \op^{-1}}(1)$, therefore
we only need to show that the kernel of the map
$ H^2(\O_{k,S},\qlat{E}{\eta \op^{-1}}(1)) \to H^2(G_k,\qlat{E}{\eta \op^{-1}}(1)) $
is finite.
We let $ K $ be a Galois extension of $ k $ such that the restriction
of ${\eta\op^{-1}}$ to $G_K$ is a multiple of the trivial character,
and let $G = \Gal(K/k)$.
Because the $ H^i(\O_{K,S},\qlat{E}{\eta \op^{-1}}(1)) $
are cofinitely generated by Theorem~\ref{cogentheorem}(1), we see from Lemma~\ref{Gcofin} and
the spectral sequence
$$ H^p(G,H^q(\O_{K,S},\qlat{E}{\eta \op^{-1}}(1))) \Longrightarrow H^{p+q}(\O_{k,S},\qlat{E}{\eta\op^{-1}}(1)) $$
that it suffices to show that the map
\begin{equation*}
 H^2(\O_{K,S},\qlat{E}{\eta \op^{-1}}(1))^G \to H^2(G_K,\qlat{E}{\eta \op^{-1}}(1))^G 
\end{equation*}
has finite kernel.  By our choice of $K$ and Remark~\ref{Mremark}(1)
this follows if we show that the kernel of $ H^2(\O_{K,S},\Q_p/\Z_p(1)) \to H^2(G_K,\Q_p/\Z_p(1) ) $
is finite.

Consider the boundary map $H^1(G_K,\Q_p/\Z_p(1)) \to H^0(Z,\Q_p/\Z_p)$ where $Z = \Spec(\O_K) \setminus S$
(cf.~\eqref{gysin}).
This is obtained by taking the direct limit of the maps $H^1(G_K,\Z/p^n\Z(1)) \to H^0(Z,\Z/p^n\Z)$.
The size of the cokernel of this last map is bounded by the class number of $K$.
Taking the direct limit over $n$, it follows that $H^1(G_K,\Q_p/\Z_p(1)) \to H^0(Z,\Q_p/\Z_p)$
has finite cokernel, so $ H^2(\O_{K,S},\Q_p/\Z_p(1)) \to H^2(G_K,\Q_p/\Z_p(1) ) $ has finite kernel.
\end{proof}

We also prove the following result, which provides a lower bound for the corank of
$H^1(\O_{k,S},\qlatmtt{E}{\eta}{1})$.

\begin{proposition} \label{OPprop}
Let $ k $ be a number field, $ p $ a prime number, $ E $ a finite extension of $ \Q_p $,
$ \eta : G_k \to E $ an Artin character realizable over~$ E $,
$ \lat{E}{\eta} $ an $\Oe$-lattice for $\eta$, 
and
$ \qlat{E}{\eta} = \lat{E}{\eta} \otimes_{\Z_p} \Q_p/\Z_p $.
Write $\alpha: \Spec k \to \Spec \O_{k,P}$
for the natural map.  Then
$ H^i(\O_{P}, \alpha_* \qlatmtt{E}{\eta}{1}) $ is cofinitely generated
for $i \ge 0$,
and its corank is independent of the choice of $ \lat{E}{\eta} $.
Moreover,
\begin{equation} \label{h1lowerbound}
\begin{aligned}
& \, \corank_{\Oe} H^1(\O_{k,S}, \qlatmtt{E}{\eta}{1})
\\
= & \,
 \corank_{\Oe} H^1(\O_{P}, \alpha_* \qlatmtt{E}{\eta}{1})
+
\sum_{v \in S \setminus P} \dim_E \vlat{E}{\eta \op^{-1}}^{G_{w_v}}
\,,
\end{aligned}
\end{equation}
where $ w_v $ is a prime of $ \kbar $ above $ v $ with decomposition
group $ G_{w_v} $.
\end{proposition}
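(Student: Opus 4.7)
The plan is to analyse the Leray spectral sequence associated to the open immersion $\beta\colon\Spec\O_{k,S}\hookrightarrow\Spec\O_{k,P}$. Because $\qlatmtt{E}{\eta}{1}$ is already an \'etale sheaf on $\Spec\O_{k,S}$, the sheaves $\alpha_*\qlatmtt{E}{\eta}{1}$ and $\beta_*\qlatmtt{E}{\eta}{1}$ coincide on $\Spec\O_{k,P}$. The higher direct images $R^q\beta_*\qlatmtt{E}{\eta}{1}$ are skyscraper sheaves supported on $Z = S\setminus P$, with stalks at $v\in Z$ given by the continuous cohomology groups $H^q(I_v,\qlatmtt{E}{\eta}{1})$ equipped with the natural action of $\Gal(\overline{k_v}/k_v)/I_v$, where $I_v$ denotes the inertia subgroup at $v$.

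The local stalks are computed using the structure of $I_v$. Since $v\nmid p$, the wild inertia has trivial $p$-primary cohomology and the tame quotient has $p$-cohomological dimension one, coming from its pro-$p$ part $\Z_p(1)$. Hochschild--Serre therefore yields $R^q\beta_*\qlatmtt{E}{\eta}{1}=0$ for $q\ge2$; the $q=0$ stalk is $\qlatmtt{E}{\eta}{1}^{I_v}$, of corank $\dim_E\vlat{E}{\eta}^{I_v}$; and for $q=1$ the identification of $H^1(\Z_p(1),\cdot)$ with a $(-1)$ Tate twist combines with the definition of $\mtt{1}$ as the $\psio$-twist so that the combined Frobenius twist $\psio(\Fr_v)\Nm(v)^{-1}$ collapses to $\op(\Fr_v)^{-1}$, giving an isomorphism of $\Gal(\overline{k_v}/k_v)/I_v$-modules $H^1(I_v,\qlatmtt{E}{\eta}{1})\otimes_{\Z_p}\Q_p\simeq\vlat{E}{\eta\op^{-1}}^{I_v}$. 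In particular the corank of its $\Gal(\overline{k_v}/k_v)/I_v$-invariants equals $\dim_E\vlat{E}{\eta\op^{-1}}^{G_{w_v}}$.

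Every term $E_2^{p,q}$ of the spectral sequence is then cofinitely generated: the skyscraper terms by the local computation, and the abutment $H^{p+q}(\O_{k,S},\qlatmtt{E}{\eta}{1})$ by Proposition~\ref{cofingen}. Since the only nontrivial differentials into $E_2^{p,0}$ come from $E_2^{p-2,1}$ (a cofinitely generated object), and $E_\infty^{p,0}$ is a subquotient of the cofinitely generated abutment, a straightforward extension argument shows that $E_2^{p,0}=H^p(\O_P,\alpha_*\qlatmtt{E}{\eta}{1})$ is itself cofinitely generated for every $p\ge0$. Independence of the choice of lattice follows as in Remark~\ref{Mremark}(1): any two lattices are connected by a short exact sequence $0\to X\to W\to W'\to 0$ with $X$ finite, and each $R^q\alpha_*X$ has finite stalks on $\Spec\O_{k,P}$, so $H^p(\O_P,R^q\alpha_*X)$ is finite, which does not affect coranks in the resulting long exact sequence.

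The corank formula comes from the five-term exact sequence of the Leray spectral sequence
\begin{equation*}
0\to H^1(\O_P,\alpha_*\qlatmtt{E}{\eta}{1})\to H^1(\O_{k,S},\qlatmtt{E}{\eta}{1})\to\bigoplus_{v\in Z}H^1(I_v,\qlatmtt{E}{\eta}{1})^{\Gal(\overline{k_v}/k_v)/I_v}\overset{d_2}{\to}H^2(\O_P,\alpha_*\qlatmtt{E}{\eta}{1}),
\end{equation*}
in which the middle-right term has corank $\sum_{v\in Z}\dim_E\vlat{E}{\eta\op^{-1}}^{G_{w_v}}$ by the preceding computation. The formula will follow once $d_2$ is shown to have image of corank zero, equivalently that the localisation map $H^1(\O_{k,S},\qlatmtt{E}{\eta}{1})\to\bigoplus_{v\in Z}H^1(I_v,\qlatmtt{E}{\eta}{1})^{\Gal(\overline{k_v}/k_v)/I_v}$ is surjective modulo a finite subgroup. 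This is the main obstacle, and I would establish it via Poitou--Tate duality: the cokernel of this localisation is Pontryagin dual to a global Selmer group for $\qlat{E}{\eta^\vee\op}$ on $\Spec\O_{k,S}$ with unramified conditions at $v\in Z$, and this dual Selmer has corank zero because the Frobenius eigenvalues on $\vlat{E}{\eta^\vee\op}(1)^{I_v}$ are algebraic numbers of complex absolute value $\Nm(v)\ne1$, so the relevant local $H^0$-contributions all vanish.
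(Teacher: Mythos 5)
Your overall strategy — the Leray spectral sequence for $\beta\colon \Spec\O_{k,S}\to\Spec\O_{k,P}$, the identification $\alpha_*\qlatmtt{E}{\eta}{1}=\beta_*\qlatmtt{E}{\eta}{1}$, the vanishing of $R^q\beta_*$ for $q\ge 2$, and the local computation that $H^1(I_v,\qlatmtt{E}{\eta}{1})^{\Fr_v}$ has corank $\dim_E\vlat{E}{\eta\op^{-1}}^{G_{w_v}}$ — is sound, and given the five-term sequence the proposition is indeed equivalent to your key claim that the localisation map to $\bigoplus_{v\in S\setminus P}H^1(I_v,\qlatmtt{E}{\eta}{1})^{\Fr_v}$ has cokernel of corank zero. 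The genuine gap is in your justification of precisely that claim. Identifying the cokernel, via Poitou--Tate, with (the dual of a subquotient of) a Selmer group for the Cartier dual module, whose $G_k$-action is through $\eta^\vee\op$, with unramified conditions at $v\in S\setminus P$ and the strict (trivial) condition at the places above $p$ and at infinity, is the right shape; but the reason you give — Frobenius eigenvalues on $\vlat{E}{\eta^\vee\op}(1)^{I_v}$ of absolute value $\Nm(v)\ne 1$, ``so the relevant local $H^0$-contributions vanish'' — is purely local at the primes of $S\setminus P$ and cannot bound the corank of a global Selmer group (the same local facts hold if one relaxes the condition at $p$, and that larger group has positive corank in general; if instead you meant the Greenberg--Wiles formula, it only computes a difference of Selmer and dual-Selmer coranks and leaves you in a circle). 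What actually forces the dual Selmer group to have corank zero is the strict condition above $p$ together with a genuinely global finiteness: after inflation--restriction to the splitting field $K$ of $\eta^\vee\op$ (the inflation term being finite), its elements become homomorphisms of $\Gal(\Omega_S/K)$ into the dual module that are unramified everywhere and trivial on the decomposition groups above $p$, hence factor through the $p$-part of the class group of $K$, which is finite. Some argument of this kind must be supplied; it is exactly where the paper invests its arithmetic input (the class-number argument in Proposition~\ref{gysinprop}(1) and the local duality computations for the minimal $S$, after reducing to $1$-dimensional $\eta$ by Brauer induction), whereas your duality route would avoid Brauer induction altogether if this step were completed.

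A smaller, repairable slip: in the lattice-independence step, finiteness of the stalks of $R^q\alpha_*X$ does not imply finiteness of $H^p(\O_{k,P},R^q\alpha_*X)$; already for $X=\Z/p$ with trivial action, $R^1\alpha_*X$ is nonzero at every closed point and $H^0(\O_{k,P},R^1\alpha_*X)$ is infinite, since it absorbs the infinite quotient of $H^1(k,\Z/p)$ by $H^1(\O_{k,P},\Z/p)$. The argument is saved by noting that the cokernel of $\alpha_*W\to\alpha_*W'$ is the subsheaf $\ker(R^1\alpha_*X\to R^1\alpha_*W)$, which vanishes at every $v\notin S$ (there inertia acts trivially and $H^1(I_v,X)\to H^1(I_v,W)$ is injective), so it is a skyscraper with finite stalks supported on $S\setminus P$ and all its cohomology is finite; together with the finiteness of $H^p(\O_{k,P},\alpha_*X)$ this gives the independence of coranks from the choice of lattice.
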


\begin{proof}
Write $W$ for $\qlatmtt{E}{\eta}{1}$ and $Z $ for
$ \Spec \O_{k,P} \setminus \Spec \O_{k,S}$.  Let $\beta : \Spec \O_{k,S} \to \Spec \O_{k,P}$ be
the natural map. For $ i\ge0 $, by \cite[II. Lemma~2.4]{Mil86} we have an exact sequence
\begin{equation} \label{smin}
\begin{aligned}
\cdots & \to
 \oplus_{v \in Z} H^{i-1}(\Gal(\overline{k}_v/k_v), W) 
\to
 H^i(\O_{k,P}, \beta_! W)
\\
&
\to
 H^i(\O_{k,S}, W)
\to
 \oplus_{v \in Z} H^i(\Gal(\overline{k}_v/k_v), W) 
\to\cdots
\,.
\end{aligned}
\end{equation}
For any finite prime $v$ of $k$ and a finite Galois extension $K$ of $k_v$
such that $\Gal(\kbar_v/K)$ acts trivially on $\qlat{E}{\eta \op^{-1}}$,
\cite[Theorem~7.1.8(i) and Proposition~7.3.10]{NSW2nded}
imply that $H^i(\Gal(\kbar_v/K),W)$ is cofinitely
generated, and using Lemma~\ref{Gcofin} one sees that the same
holds for $  \oplus_{v \in Z} H^i(\Gal(\overline{k}_v/k_v), W) $.
From Proposition~\ref{cofingen}(1) and~\eqref{smin}
it then follows that $H^i(\O_{k,P},\beta_!W)$ is cofinitely generated.
Since $H^i(\Gal(\kbar_v/k_v),X)$ is finite for finite $X$ by
\cite[Theorem~7.1.8(iii)]{NSW2nded},
it also follows from the long exact
sequence associated to~\eqref{sos}
that the corank of $H^i(\Gal(\overline{k}_v/k_v,W)$ does not depend on the choice of $\lat{E}{\eta}$.
Hence the same holds for the corank of $H^i(\O_{k,P},\beta_!W)$ by Remark~\ref{Mremark}(1) and~\eqref{smin}.

On the other hand, with $\gamma: Z \to \Spec \O_{k,P}$ the natural map,
by \cite[p.76]{Mil80} we have a short exact sequence
\begin{equation} \label{opsos}
0 \to \beta_! \beta^* \alpha_* W \to \alpha_* W \to \gamma_*\gamma^*\alpha_* W \to 0
\end{equation}
of sheaves for the \'etale topology on $\O_{k,P}$,
which gives a long exact sequence of cohomology groups.  We note that
\begin{equation} \label{gammalabel}
 H^i(\O_{k,P}, \gamma_*\gamma^*\alpha_* W)
\simeq
 \oplus_{v \in Z} H^i(\Gal(\overline{\F}_v/\F_v),W^{I_{w_v}})
\,,
\end{equation}
where $I_{w_v}$ denotes the decomposition group of a prime $ w_v $ in $\kbar$ lying above $v$.
These groups are cofinitely generated and using~\eqref{sos} their coranks are independent of the choice
of $\lat{E}{\eta}$ because $H^i(\Gal(\overline{\F}_v/\F_v),X)$ is finite if $X$ is
finite.  Since $H^i(\O_{k,P},\beta_!\beta^* \alpha_* W) = H^i(\O_{k,P},\beta_! W)$ it follows from
the long exact sequence associated to~\eqref{opsos} that
$H^i(\O_{k,P},\alpha_* W)$ is cofinitely generated and that its corank is independent of
the choice of $\lat{E}{\eta}$.
Moreover, $H^i(\O_{k,P},\alpha_* W)$ behaves well under extending
$ E $, as well as under induction and addition
of characters.  Combining Brauer induction with parts~(3) and~(4) of Remark~\ref{Mremark},
one sees that
it is enough to prove~\eqref{h1lowerbound} for $1$-dimensional characters $\eta$.
With this $ S $, if $\eta$ is unramified at a prime $v$ of $k$, then
$\vlat{E}{\eta \op^{-1}}^{G_{w_v}} = \vlat{E}{\eta \op^{-1}}^{\Fr_v = 1}$, so
by Proposition~\ref{gysinprop}, it is enough to prove the result for $S = S_{min}$, the
smallest set of primes containing the finite primes of $k$ lying above $p$ and the primes at
which $\eta$ is ramified.
In this case, if $v$ is in $Z$ then $\vlat{E}{\eta \op^{-1}}^{G_{w_v}}$ is
trivial since $\eta \op^{-1}$ is ramified at $v$.  Therefore,
we need to show that $\corank_{\Oe} H^1(\O_{k,S_{min}},W) = \corank_{\Oe} H^1(\O_{k,P}, \alpha_* W)$
for $1$-dimensional $\eta$.

The first term of~\eqref{smin}, with $i = 1$ and $S=S_{min}$, is finite since $\Gal(\overline{k}_v/k_v)$
acts non-trivially on $W$.  On the other hand, for any finite Abelian extension $K$ of $k_v$, the coranks of
$H^1(\Gal(\overline{K}/K),E/\Oe(1))$ and $\oplus_{\psi} \, H^1(\Gal(\kbar_v/k_v),\qlat{E}{\psi}(1))$ are
the same, where the sum runs over all characters $\psi : \Gal(\kbar_v/k_v) \to E$ that are irreducible
over $E$ and factor through $\Gal(K/k_v)$.  Since
\begin{equation*}
H^1(\Gal(\overline{K}/K),E/\Oe(1)) \simeq E/\Oe \simeq H^1(\Gal(\kbar_v/k),E/\Oe(1)) 
\end{equation*}
by \cite[Theorem 7.3.10(ii)]{NSW2nded},
$W \simeq \qlat{E}{\eta \op^{-1}}(1)$ by Remark~\ref{coeffremark}(1) and
$\eta \op^{-1}$ is non-trivial on $\Gal(\kbar_v/k_v)$ for $v$ in $Z$,
we see that
$H^1(\Gal(\kbar_v/k_v),W)$ is finite for all such $v$.  By~\eqref{smin}, with $i=1$ and $S=S_{min}$, it follows that
the coranks of $ H^1(\O_{k,P}, \beta_! W)$ and $H^1(\O_{k,S_{min}}, W)$ are equal.  Moreover, in this case,
the groups $H^i(\Gal(\overline{\F}_v/\F_v),W^{I_{w_v}})$ in~\eqref{gammalabel} are also finite since
$W^{I_{w_v}}$ is finite.
Therefore, from the long exact sequence associated to~\eqref{opsos} it follows that the coranks of
$ H^1(\O_{k,P}, \alpha_* W)$ and $H^1(\O_{k,P}, \beta_! W)$ are the
same.  This completes the proof.
\end{proof}

We need the following lemmas in order to prove Theorem~\ref{cogentheorem}(2), and also to show that
Definition~\ref{euler_char} is independent of the choice of lattice.  Here,
for a $G_k$-module $A$ and a place $v$ in $\Sig$, we let $A_{+,v} = H^0(k_v,A)$. 

\begin{lemma} \label{eul_ind}
Let $ A $ be a finite Abelian group with continuous $ G_S $-action
and $ \# A $ in $ \O_{k,S}^\times $.  Then
\begin{equation} \label{euler-poincare}
\frac{\# H^0(G_S,A) \cdot \# H^2(G_S,A)}{\# H^1(G_S,A)}
=
(\#A)^{-[k:\Q]} \cdot \prod_{v \in \Sig} (\# A_{+,v})
\,.
\end{equation}
For non-trivial $A$, this last quantity equals 1 if and only if
$k$ is totally real and every complex conjugation in $G_k$ acts trivially on $A$.
\end{lemma}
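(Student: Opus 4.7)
The first formula is the global Euler–Poincar\'e characteristic formula of Tate, and my plan is simply to reduce it to a standard reference. The hypothesis that $\#A$ lies in $\O_{k,S}^\times$ guarantees both that $A$ is naturally a $G_S$-module and that its order is invertible in $\O_{k,S}$, which is exactly the hypothesis under which Tate's formula applies. I would invoke \cite[Theorem~8.7.4]{NSW2nded} (equivalently, the combination of \cite[Theorems~2.1 and~3.1]{Tate62} with local duality), which yields
\[
\frac{\#H^0(G_S,A)\cdot\#H^2(G_S,A)}{\#H^1(G_S,A)}
=
(\#A)^{-[k:\Q]}\cdot\prod_{v\in\Sig}\#H^0(G_{k_v},A),
\]
and this is the first assertion since $A_{+,v} = H^0(G_{k_v},A) = H^0(k_v,A)$ by definition.

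For the characterization of equality, I would split $\Sig$ into real and complex places, with cardinalities $r_1$ and $r_2$. For a complex place $v$, $G_{k_v}$ is trivial and so $A_{+,v} = A$; for a real place $v$, $G_{k_v}$ is of order $2$ generated by a complex conjugation $c_v$, so $A_{+,v} = A^{c_v}$. Using $[k:\Q] = r_1 + 2r_2$, the right-hand side of the formula rewrites as
\[
(\#A)^{-r_1 - r_2}\prod_{v\text{ real}}\#A^{c_v}.
\]
Each factor $\#A^{c_v}$ is at most $\#A$ with equality if and only if $c_v$ acts trivially on $A$. Hence the whole expression is bounded above by $(\#A)^{-r_2}$, which for non-trivial $A$ is itself at most $1$, with equality only when $r_2 = 0$. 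Thus equality to $1$ forces $r_2 = 0$, i.e.\ $k$ totally real, together with $A^{c_v} = A$ for every real place $v$; the converse is immediate.

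There is no real obstacle here: the substantive input is Tate's formula, which is standard under the stated hypothesis on $\#A$, and the characterization of equality is a direct bookkeeping argument with archimedean factors. The only mild care required is to ensure the formulation of Tate's theorem being cited matches the one used here (with the archimedean product normalized by $H^0$ rather than by Tate cohomology), which is precisely the form in \cite[Ch.~VIII, \S7]{NSW2nded}.
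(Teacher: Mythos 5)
Your proof is correct and follows essentially the same route as the paper: both invoke the global Euler--Poincar\'e characteristic formula (cited as \cite[8.7.4]{NSW2nded}), rewrite the archimedean product using $\prod_{v\in\Sig}\|\#A\|_v=(\#A)^{[k:\Q]}$, and then deduce the characterization of equality by the elementary comparison $\#A_{+,v}\le\#A$ at real places and $\#A_{+,v}=\#A$ at complex places (the paper compresses this last step to ``follows immediately''). Your remark about checking that the cited formulation uses ordinary $H^0(k_v,A)$ at the infinite places rather than Tate cohomology is exactly the right point of care and matches the form used in the paper.
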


\begin{proof}
By \cite[Theorem~8.3.20(i)]{NSW2nded} the groups $ H^i(G_S,A) $ for $ i \ge 0 $ are finite,
and by the Global Euler-Poincar\'e Characteristic formula
\cite[8.7.4]{NSW2nded}
we have
\begin{equation*}
\frac{\# H^0(G_S,A) \cdot \# H^2(G_S,A)}{\# H^1(G_S,A)}
=
 \prod_{v \in \Sig} \frac{\# H^0 (k_v, A)}{||\# A||_v}
\,,
\end{equation*}
where $||n||_v = n$ if $v$ is real and $||n||_v = n^2$ if $v$ is complex.
This proves the first claim, and the second claim follows immediately.
\end{proof}

\begin{lemma}\label{9lemma}
Let $\O$ be the valuation ring of a finite extension of $\Q_p$.
Suppose that $  0 \to A_1 \to A_2 \to A_3 \to 0 $ is a short exact sequence of cofinitely generated
$\O$-modules with continuous $ G_S $-action, with $A_1$ finite.
Then
\begin{enumerate}
\item
the size of the image of the connecting map
$\delta: H^2(G_S,A_3) \to H^3(G_S,A_1)$ is bounded by a constant depending only
on the number of generators of $A_1$ and the number of real places
of $k$;

\item
$\delta$ is trivial if and only if $(A_2)_{+,v} \to (A_3)_{+,v}$ is surjective for
all $v$ in $\Sig$.  In this case we have a nine-term exact sequence
\begin{alignat*}{1}
 0 &
\to
 H^0(G_S,A_1) \to H^0(G_S,A_2) \to H^0(G_S,A_3)
\to 
\cdots
\\
\cdots &
\to 
 H^2(G_S,A_1) \to H^2(G_S,A_2) \to H^2(G_S,A_3)
\to 
 0
\,.
\end{alignat*}
\end{enumerate}
\end{lemma}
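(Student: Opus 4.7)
The plan is to combine Remark~\ref{coh_dim}, which for $i \ge 3$ either gives $H^i(G_S, X) = 0$ (when $p \ne 2$) or identifies $H^i(G_S, X) \simeq \bigoplus_{v \in \Sig} H^i(k_v, X)$ naturally in $X$ (when $p = 2$), with the naturality of the long exact cohomology sequence associated to $0 \to A_1 \to A_2 \to A_3 \to 0$. I split into cases by the parity of $p$.

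For $p \ne 2$, we have $H^3(G_S, A_1) = 0$, so $\delta = 0$ and part~(1) is trivial. For part~(2), the idempotents $(1 \pm c_v)/2 \in \Q_p[\langle c_v \rangle]$ split every $p$-primary $G_{k_v}$-module into its $c_v$-eigenspaces compatibly with any $G_{k_v}$-equivariant map, so at each real place $v$ the map $(A_2)_{+,v} = A_2^{c_v} \to A_3^{c_v} = (A_3)_{+,v}$ is a direct summand of the surjection $A_2 \to A_3$ and is itself surjective; at complex $v$ the condition is trivially true. Thus both sides of the equivalence hold automatically.

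For $p = 2$, naturality in Remark~\ref{coh_dim} lets me factor $\delta$ as $\bigoplus_v \delta_v$, where $\delta_v \colon H^2(k_v, A_3) \to H^3(k_v, A_1)$ is the local connecting map. Complex places contribute nothing. At a real place $v$, $G_{k_v} = \langle c_v \rangle \simeq \Z/2$; I use the $2$-periodicity of Tate cohomology (which agrees with ordinary cohomology in degrees $\ge 1$) to identify $H^2(k_v, A) \simeq A^{c_v}/(1+c_v)A$ and $H^3(k_v, A) \simeq \ker(1+c_v \mid A)/(1-c_v)A$ for any $G_{k_v}$-module $A$. Exactness at $H^2(k_v, A_3)$ of the local long exact sequence then yields $\delta_v = 0$ if and only if $A_2^{c_v}/(1+c_v)A_2 \to A_3^{c_v}/(1+c_v)A_3$ is surjective. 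A short lifting argument --- given $x \in A_3^{c_v}$, pull back its class to some $y \in A_2^{c_v}$ with $y \mapsto x + (1+c_v)z$ for some $z \in A_3$, choose any lift $\tilde z \in A_2$ of $z$, and replace $y$ by $y - (1+c_v)\tilde z \in A_2^{c_v}$ --- upgrades this to genuine surjectivity of $A_2^{c_v} \to A_3^{c_v}$, which is the condition stated in part~(2).

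For part~(1), the bound follows from $\# \im(\delta) \le \# H^3(G_S, A_1) = \prod_{v \textup{ real}} \# H^3(k_v, A_1)$; each factor is a subquotient of $A_1$ killed by $2$, and so its size is bounded by a constant depending only on the number of $\O$-generators of $A_1$, while the number of factors is the number of real places of $k$. Finally, the 9-term exact sequence in part~(2) is immediate: once $\delta = 0$, the segment of the long exact sequence from $0 \to H^0(G_S, A_1)$ through $H^2(G_S, A_3) \to 0$ is itself exact. I expect the main subtlety to be the lifting argument translating quotient-level surjectivity into genuine surjectivity $A_2^{c_v} \to A_3^{c_v}$ at each real place; the rest is routine bookkeeping with long exact sequences, Tate periodicity for $\Z/2$, and the identification of Remark~\ref{coh_dim}.
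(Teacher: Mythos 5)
Your $p\neq 2$ case, your part~(1), and your local analysis at the real places (the Tate-periodicity identifications $H^2(k_v,A)\simeq A^{c_v}/(1+c_v)A$, $H^3(k_v,A)\simeq \ker(1+c_v)/(1-c_v)A$, and the lifting argument upgrading surjectivity on $\widehat{H}^0$-level quotients to surjectivity of $A_2^{c_v}\to A_3^{c_v}$) are all correct, and one implication of part~(2) --- surjectivity of $(A_2)_{+,v}\to(A_3)_{+,v}$ at every $v$ implies $\delta=0$ --- does follow from your setup. The genuine gap is the claimed factorization $\delta=\bigoplus_v\delta_v$ when $p=2$: Remark~\ref{coh_dim} identifies cohomology with a sum of local groups only in degrees $i\ge 3$, so the domain $H^2(G_S,A_3)$ of $\delta$ does not decompose. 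What naturality actually gives is that, after identifying $H^3(G_S,A_1)\simeq\bigoplus_{v\in\Sig}H^3(k_v,A_1)$, the $v$-component of $\delta$ is $\delta_v\circ\mathrm{res}_v$ with $\mathrm{res}_v\colon H^2(G_S,A_3)\to H^2(k_v,A_3)$ the restriction map. Hence $\delta=0$ only tells you that $\delta_v$ vanishes on the image of $\mathrm{res}_v$, not that $\delta_v=0$, so the ``only if'' direction of part~(2) is unproved as written. It could be repaired by showing that $\mathrm{res}_v$ is surjective in degree $2$ for such coefficients (this is true, but it requires the Poitou--Tate nine-term sequence for finite modules plus a direct limit argument --- a nontrivial input you neither prove nor cite), whereas your Remark~\ref{coh_dim} alone does not suffice.

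The paper's proof avoids this issue entirely by testing the vanishing of $\delta$ at the other end of the exact sequence: $\delta=0$ if and only if $H^3(G_S,A_1)\to H^3(G_S,A_2)$ is injective, and \emph{both} of these groups lie in degree $3$, where they decompose as sums of local groups compatibly with the map. One is then reduced to injectivity of $H^3(k_v,A_1)\to H^3(k_v,A_2)$, which by $2$-periodicity is injectivity of $H^1(k_v,A_1)\to H^1(k_v,A_2)$, and the local long exact sequence turns this into exactly the surjectivity of $(A_2)_{+,v}\to(A_3)_{+,v}$. If you reorganize your argument this way, the rest of your computation (periodicity, the local sequence, and your part~(1) bound, which matches the paper's) goes through essentially verbatim.
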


\begin{proof}
By \cite[8.6.10(ii)]{NSW2nded}
we have
$H^3(G_S,A_1) \simeq \prod_{v \in \Sig} H^3(k_v,A_1) \simeq \prod_{v \in \Sig} H^1(k_v,A_1)$,
the size of which is, by Lemma~\ref{Gcofin}, bounded by a constant depending only on the number of generators of $A_1$
and the number of real places of $k$.  This proves part~(1).
For part~(2), we see this way that
$\delta$ is trivial if and only if $H^1(k_v,A_1) \to H^1(k_v,A_2)$ is injective for all $v$ in $\Sig$,
and this is true if and only if $H^0(k_v,A_2) \to H^0(k_v,A_3)$ is surjective for all $v$ in $\Sig$.
\end{proof}

We shall now finish the proof of Theorem~\ref{cogentheorem}
by proving Propositions~\ref{divprop}, \ref{corankprop} and~\ref{h0finiteness}.

\begin{proposition}\label{divprop}
Fix $ e $ in $ \B_k(E) $.  
Then
$H^2(\O_{k,S},\qlatmtt{E}{\eta}{1-e})$ is $p$-divisible
for $ p\ne 2 $.
If $ p=2 $ then
\begin{equation*}
 H^2(\O_{k,S},\qlatmtt{E}{\eta}{1-e}) \simeq 2H^2(\O_{k,S},\qlatmtt{E}{\eta}{1-e}) \oplus 
\!\bigoplus_{v\in\Sig} \!\! \qlat{E}{\eta}_{+,v}/2
\,,
\end{equation*}
and $2H^2(\O_{k,S}, \qlatmtt{E}{\eta}{1-e})$ is $2$-divisible.
\end{proposition}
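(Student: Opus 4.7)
The plan is to probe $p$-divisibility of $H^2(\O_{k,S}, \qlatmtt{E}{\eta}{1-e})$ through the short exact sequence~\eqref{texact} with $a = p$. Writing $W$ for $\qlatmtt{E}{\eta}{1-e}$, the induced long exact sequence contains the four-term piece
\begin{equation*}
0 \to H^2(\O_{k,S}, W)/pH^2(\O_{k,S}, W) \to H^3(\O_{k,S}, W[p]) \to H^3(\O_{k,S}, W)[p] \to 0,
\end{equation*}
so the failure of $p$-divisibility of $H^2$ is controlled by $H^3(\O_{k,S}, W[p])$. For $p \neq 2$, Remark~\ref{coh_dim} makes this middle group vanish, so multiplication by $p$ is surjective on $H^2(\O_{k,S}, W)$, finishing the odd case at once.

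For $p = 2$, I would invoke Remark~\ref{coh_dim} to identify $H^i(\O_{k,S}, \cdot)$ for $i \geq 3$ with $\bigoplus_{v \in \Sig} H^i(\Gal(\overline{k_v}/k_v), \cdot)$, applied both to $W$ (by direct limit from the finite-coefficient statement) and to $W[2]$. Comparing the four-term sequence above with its local analogues at each archimedean place via the localization map, a five-lemma argument yields a natural isomorphism
\begin{equation*}
H^2(\O_{k,S}, W)/2H^2(\O_{k,S}, W) \simeq \bigoplus_{v \in \Sig} H^2(k_v, W)/2H^2(k_v, W).
\end{equation*}
At a complex $v$ the summand vanishes; at a real $v$, periodicity of $\Gal(\C/\R)$-cohomology gives $H^2(k_v, W) = W^{c}/(1+c)W$, and the $2$-divisibility of $W$ (for $w \in W$ write $w = 2u$, so $(1+c)w = 2(1+c)u \in 2W^{c}$) yields $(1+c)W = 2W^{c}$. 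Hence $H^2(k_v, W) \simeq \qlat{E}{\eta}_{+,v}/2$, which has exponent $2$.

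Repeating the whole argument with $2^n$ in place of $2$ in~\eqref{texact} gives in the same way an isomorphism
\begin{equation*}
H^2(\O_{k,S}, W)/2^n H^2(\O_{k,S}, W) \simeq \bigoplus_{v \in \Sig} H^2(k_v, W)/2^n H^2(k_v, W),
\end{equation*}
and since the local groups on the right already have exponent dividing $2$, the right-hand side is independent of $n \geq 1$. Thus $2H^2(\O_{k,S}, W) = 2^n H^2(\O_{k,S}, W)$ for every $n$, which is precisely the $2$-divisibility of $2H^2(\O_{k,S}, W)$.

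Finally, by Proposition~\ref{cofingen} the module $H^2(\O_{k,S}, W)$ is cofinitely generated, hence isomorphic to $(E/\Oe)^{r} \oplus F$ for some finite $\Oe$-module $F$. Then $2H^2 \simeq (E/\Oe)^{r} \oplus 2F$, and its $2$-divisibility forces the finite group $2F$ to vanish (by Nakayama), so $F$ has exponent $2$. Consequently $F \simeq H^2/2H^2 \simeq \bigoplus_{v \in \Sig} \qlat{E}{\eta}_{+,v}/2$, and the decomposition $H^2 \simeq 2H^2 \oplus F$ gives the stated direct sum. The main technical point requiring care is the local computation at the real places and the uniformity across all $n \geq 1$; the splitting then follows formally from the structure theorem for cofinitely generated $\Oe$-modules.
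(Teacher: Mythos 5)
Your argument is correct and is essentially the paper's own proof: both cases rest on the coefficient sequence~\eqref{texact} with $a=2^n$ (resp.\ $a=p$), the identification of $H^i(\O_{k,S},\cdot)$ for $i\ge 3$ with archimedean local cohomology from Remark~\ref{coh_dim}, the $2$-periodicity of the cohomology of $\Gal(\overline{k_v}/k_v)$ at real places, and the structure theory of cofinitely generated $\Oe$-modules. The differences are only organizational: the paper first splits $H^2\simeq A\oplus B$ with $B$ finite and kills $2B$ using that $H^3(\O_{k,S},\klatmtt{2^n}{E}{\eta}{1-e})$ has exponent $2$, then identifies $B\simeq H^2/2$ with $\bigoplus_{v\in\Sig}\qlat{E}{\eta}_{+,v}/2$ via kernels mapping into $H^3$ and the periodicity isomorphism $H^3(k_v,\cdot)\simeq H^1(k_v,\cdot)$, whereas you reach the same conclusions from the $n$-independence of $H^2/2^nH^2$ and the computation of $H^2(k_v,\cdot)$ as $\widehat{H}^0$.
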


\begin{proof}
If $p \ne 2$ then this follows from Lemma~\ref{9lemma}(2) applied to~\eqref{texact} with $a=p$,
so let us take $p=2$.
By the structure of cofinitely generated $ \Oe $-modules we have
$H^2(\O_{k,S},\qlatmtt{E}{\eta}{1-e}) $ ${}\simeq A \oplus B$ with $A$ $2$-divisible and $B$ finite.
Take $n \ge 1$ such that $H^2(\O_{k,S},\qlatmtt{E}{\eta}{1-e})/2^n \simeq B$.
Note that
$H^3(\O_{k,S},\klatmtt{2^n}{E}{\eta}{1-e}) \simeq \oplus_{v \in \Sig} H^3(k_v, \klatmtt{2^n}{E}{\eta}{1-e})$
is killed by multiplication by $2$, so from the long exact sequence associated to \eqref{texact} with $a = 2^n$
we see $2B = 0$, hence $A = 2H^2(\O_{k,S},\qlatmtt{E}{\eta}{1-e})$.  On the other hand,
\begin{alignat*}{1}
B & \simeq H^2(\O_{k,S},\qlatmtt{E}{\eta}{1-e})/2 \\
& \simeq \ker \left( H^3(\O_{k,S},\klatmtt{2}{E}{\eta}{1-e}) \to H^3(\O_{k,S},\qlatmtt{E}{\eta}{1-e}) \right) \\
& \simeq \oplus_{v \in \Sig} \ker \left( H^3(k_v,\klatmtt{2}{E}{\eta}{1-e}) \to H^3(k_v,\qlatmtt{E}{\eta}{1-e}) \right) \\
& \simeq \oplus_{v \in \Sig} \ker \left( H^1(k_v,\klatmtt{2}{E}{\eta}{1-e}) \to H^1(k_v,\qlatmtt{E}{\eta}{1-e}) \right) \\
& \simeq \oplus_{v \in \Sig} \qlatmtt{E}{\eta}{1-e}_{+,v}/2 \simeq \oplus_{v \in \Sig} \qlat{E}{\eta}_{+,v}/2
\,,
\end{alignat*}
where we used that $ \qlatmtt{E}{\eta}{1-e}_{+,v} = \qlat{E}{\eta}_{+,v} $
since $ \psio $ is trivial on every complex conjugation.
\end{proof}

\begin{proposition} \label{corankprop}
For $e$ in $\B_k(E)$, with
\begin{equation*}
\cork{i,S}{1-e}{\eta} = \corank_{\Oe} H^i(G_S,\qlatmtt{E}{\eta}{1-e})
\end{equation*}
one has 
\begin{alignat*}{1}
 \sum_{i=0}^2 (-1)^{i} \cork{i,S}{1-e}{\eta}
= \, &
 -[k:\Q] \cdot \corank_{\Oe} \qlat{E}{\eta}
\\
& +
 \sum_{v \in \Sig} \corank_{\Oe} \qlat{E}{\eta}_{+,v}
\,, 
\end{alignat*}
which is 
independent of $ e $ and
non-positive.  It is zero if and only of $k$ is totally real and $\eta$ is even.
Moreover, $\cork{i,S}{1-e}{\eta}$ is independent of $S$ for $i = 0$, and also for $i = 1,2$ if $e \ne 0$.
\end{proposition}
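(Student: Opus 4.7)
The strategy is to descend from the $p$-divisible coefficients $\qlatmtt{E}{\eta}{1-e}$ to the finite coefficients $\klatmtt{p^n}{E}{\eta}{1-e}$ by means of~\eqref{texact} with $a = p^n$, apply the global Euler--Poincar\'e formula (Lemma~\ref{eul_ind}) to the latter, and take $n \to \infty$. The long exact sequence attached to~\eqref{texact} breaks (since $H^{-1}=0$) into short exact sequences
\begin{equation*}
 0 \to H^{i-1}(\O_{k,S},\qlatmtt{E}{\eta}{1-e})/p^n \to H^i(\O_{k,S},\klatmtt{p^n}{E}{\eta}{1-e}) \to H^i(\O_{k,S},\qlatmtt{E}{\eta}{1-e})[p^n] \to 0
\end{equation*}
for each $i \ge 0$. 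By Proposition~\ref{cofingen}(1) each $H^j(\O_{k,S},\qlatmtt{E}{\eta}{1-e})$ is cofinitely generated, so its $p^n$-torsion has $\log_p$-order $n \cdot \cork{j,S}{1-e}{\eta} \cdot [E:\Q_p] + O(1)$ while its cokernel modulo $p^n$ stabilises, yielding $\log_p \# H^i(\O_{k,S},\klatmtt{p^n}{E}{\eta}{1-e}) = n \cdot \cork{i,S}{1-e}{\eta} \cdot [E:\Q_p] + O(1)$.

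Next I would apply Lemma~\ref{eul_ind} to the finite module $A = \klatmtt{p^n}{E}{\eta}{1-e}$, whose order is a power of $p$ and hence a unit in $\O_{k,S}$ since $S \supseteq P$. Because $\psio$ takes values in $1+2p\Z_p$, which contains no $2$-torsion (a short calculation: $(1+2pa)^2 = 1$ forces $a=0$ in either parity of $p$), $\psio$ is trivial on every element of $G_k$ of order $2$; in particular the diamond twist is trivial on each $G_{k_v}$ for $v \in \Sig$, giving $\klatmtt{p^n}{E}{\eta}{1-e}_{+,v} = \qlat{E}{\eta}_{+,v}[p^n]$. Moreover $\#\klatmtt{p^n}{E}{\eta}{1-e} = p^{n[E:\Q_p]\eta(\id_{\kbar})}$. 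Combining this with the asymptotics above, dividing by $n[E:\Q_p]$ and letting $n\to\infty$ gives the claimed identity, which is visibly independent of~$e$.

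For non-positivity and the equality case I would split $[k:\Q] = r_1+2r_2$ and use that $\corank_{\Oe}\qlat{E}{\eta}_{+,v}$ equals $\eta(\id_{\kbar})$ at a complex place and $(\eta(\id_{\kbar})+\eta(c_v))/2$ at a real place, rewriting the Euler characteristic as $-r_2\,\eta(\id_{\kbar}) + \tfrac12\sum_{v \text{ real}}(\eta(c_v)-\eta(\id_{\kbar}))$. Every summand is $\le 0$ since $\eta(c_v)\in\Z$ with $|\eta(c_v)|\le\eta(\id_{\kbar})$, and the total vanishes exactly when $r_2=0$ and $\eta(c_v)=\eta(\id_{\kbar})$ for every real $v$, i.e.\ exactly when $k$ is totally real and $\eta$ is even. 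Independence of $S$ is then immediate: for $S' \supseteq S$, the Gysin sequence~\eqref{gysin} of Remark~\ref{Mremark}(5) begins with an isomorphism on $H^0$, and for $i=1,2$ with $e \ne 0$ Proposition~\ref{gysinprop}(2) states that $\cork{i,S'}{1-e}{\eta} - \cork{i,S}{1-e}{\eta}=0$. The main point to watch is the $O(1)$ bookkeeping in the asymptotic step: one must ensure that the torsion part of each $H^j(\O_{k,S},\qlatmtt{E}{\eta}{1-e})$ contributes only boundedly, uniformly in $n$, so that the leading term in $n$ cleanly isolates the corank. This is exactly what the uniform cogeneration bound of Proposition~\ref{cofingen}(1) guarantees.
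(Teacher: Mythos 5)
Your proof is correct and follows essentially the same route as the paper: reduce to the finite coefficients $\klatmtt{p^n}{E}{\eta}{1-e}$ via~\eqref{texact}, apply the global Euler--Poincar\'e formula of Lemma~\ref{eul_ind}, and compare powers of $p$ as $n$ grows (the paper just fixes one sufficiently large $n$ killing all relevant torsion), with the same treatment of the archimedean terms via triviality of $\psio$ on complex conjugations and the same appeal to the Gysin sequence and Proposition~\ref{gysinprop} for independence of $S$. One small slip: the uniform cogeneration bound is Proposition~\ref{cofingen}(2), not (1), but for a fixed $e$ plain cofinite generation already makes the torsion contributions $O(1)$, so nothing is affected.
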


\begin{proof}
Note that an $\Oe$-module $A$ is cofinitely generated as an $\Oe$-module if and only if the same holds
as a $\Z_p$-module, and in this case $ \corank_{\Z_p} A = [E:\Q_p] \cdot \corank_{\Oe} A $.

Write $ W $ for $ \qlatmtt{E}{\eta}{1-e} $, so that $W_{+,v} = \qlat{E}{\eta}_{+,v} $ as
in the proof of Proposition~\ref{divprop}.
For $i=0,1,2$, let $r_i$ be an integer and $X_i$ a finite $\Z_p$-module such
that $H^i(G_S,W) \simeq (\Q_p/\Z_p)^{r_i} \times X_i$ and let
$d = [k:\Q] \cdot \corank_{\Z_p} W - \sum_{v \in \Sig} \corank_{\Z_p} W_{+,v} $.
We need to show $ r_0 - r_1 + r_2 = -d$.

Let $n > 0$ be such that $p^n$ annihilates $X_i$ for $i = 0,1,2$, as well as
the torsion in $(W_{+,v})^\vee$ for every $v$ in $\Sig$.
Since $ (W[p^n])_{+,v} = W_{+,v}[p^n]$,
the right-hand side of~\eqref{euler-poincare} with $A = W[p^n]$ equals $p^{-nd + c}$ for some $c$
independent of $n$.

On the other hand, consider the long exact sequence associated to~\eqref{texact} with $a=p^n$.
We get an isomorphism $H^0(G_S,W[p^n]) \simeq H^0(G_S,W)[p^n]$ and the two short exact sequences
\begin{equation*}
0  \to H^0(G_S,W)/p^n \to H^1(G_S,W[p^n]) \to H^1(G_S,W)[p^n] \to 0
\end{equation*}
and
\begin{equation*}
0 \to H^1(G_S,W)/p^n \to H^2(G_S,W[p^n]) \to H^2(G_S,W)[p^n] \to 0
\,.
\end{equation*}
Therefore the left-hand side of~\eqref{euler-poincare} for $A = W[p^n]$ equals
\begin{alignat*}{1}
& \, \frac{\# H^0(G_S,W[p^n]) \cdot \# H^2(G_S,W[p^n])}{\# H^1(G_S,W[p^n])}
\\
= & \, 
 \frac{\# H^0(G_S,W)[p^n] \cdot \# H^2(G_S,W)[p^n] \cdot \# X_1}{\# H^1(G_S,W)[p^n] \cdot \# X_0}
\\
= & \,
 p^{(r_0 - r_1 + r_2)n + c'}
\end{alignat*}
for some $c'$ independent of $n$.  Hence $r_0 - r_1 + r_2 = -d$ by Lemma~\ref{eul_ind}, proving the displayed
equality. 
Independence of $ e $ is then obvious.
Comparing the coranks of $\qlat{E}{\eta}$ and $\qlat{E}{\eta}_{+,v}$ as $\Oe$-modules it is 
easy to see when the right-hand side is non-positive and when it is zero.
The last statement follows from Remark~\ref{Mremark}(5) and Proposition~\ref{gysinprop}(1).
\end{proof}

We next prove a result on $ H^0(\O_{k,S}, \qlatmtt{E}{\eta}{1-e}) $.
Recall from just before Theorem~\ref{cogentheorem} that
$ \B_\eta(E) $ equals $ \B_k(E) $ if $ \eta $ does not contain the trivial character, and equals
$ \B_k(E) \setminus \{1\} $ if it does.

\begin{proposition} \label{h0finiteness}
Let $ k $ be a number field, $ E $ a finite extension of $ \Q_p $, and
$ \eta : G_k \to E $ an Artin character realizable over $ E $.
Assume that $ S $ contains $ P $ as well as all the primes at which $ \eta $ is ramified.
Then the following hold.
\begin{enumerate}
\item
For $ e $ in $\B_\eta(E)$, $H^0(\O_{k,S},\qlatmtt{E}{\eta}{1-e})$ is finite
and its size is a locally constant function of~$ e $.

\item

The corank of $ H^0(\O_{k,S},\qlat{E}{\eta})$
equals the multiplicity of the trivial character in $\eta$.

\end{enumerate}

\end{proposition}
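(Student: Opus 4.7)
My plan is to handle part~(2) first, since the same method underlies the finiteness proof in part~(1). Applying $G_S$-invariants to the short exact sequence $0 \to \lat{E}{\eta} \to \vlat{E}{\eta} \to \qlat{E}{\eta} \to 0$: the $E$-vector space $\vlat{E}{\eta}^{G_S}$ has dimension equal to the multiplicity $m$ of the trivial character in $\eta$, and $\lat{E}{\eta}^{G_S} = \lat{E}{\eta} \cap \vlat{E}{\eta}^{G_S}$ is a full $\Oe$-lattice in it. Hence $\vlat{E}{\eta}^{G_S}/\lat{E}{\eta}^{G_S} \simeq (E/\Oe)^m$ injects into $\qlat{E}{\eta}^{G_S}$, and the cokernel of this injection embeds into the $\Oe$-torsion of $H^1(G_S, \lat{E}{\eta})$. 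By the proof of Proposition~\ref{cofingen}, $H^1(G_S, \lat{E}{\eta})$ is a finitely generated $\Oe$-module, so its torsion is finite and $\corank_{\Oe} \qlat{E}{\eta}^{G_S} = m$.

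For finiteness in part~(1), the same argument applied to the $\mtt{1-e}$-twisted sequence gives $\corank_{\Oe} H^0(G_S, \qlatmtt{E}{\eta}{1-e}) = \dim_E \vlatmtt{E}{\eta}{1-e}^{G_S}$. A non-zero fixed vector $v$ on the right would satisfy $g v = \psio(g)^{e-1} v$ under the original $G_k$-action; taking $g = \tgammanought$, this says $q_k^{e-1}$ is an eigenvalue of a finite-order operator, hence a root of unity. Since $q_k \in 1+2p\Z_p$ with $q_k \ne 1$ and the $p$-adic logarithm is injective on $1+2p\Z_p$, the only root of unity of the form $q_k^{e-1}$ is $1$, forcing $e = 1$. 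So for $e \ne 1$ the corank is $0$; for $e = 1 \in \B_\eta(E)$ we are in the case $m = 0$, so part~(2) again gives corank $0$. Cofinite generation then yields finiteness.

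For local constancy, fix $e_0 \in \B_\eta(E)$, choose a finite Galois extension $L \subset \Omega_S$ of $k$ with $\eta|_{G_L}$ trivial, and set $G = \Gal(L/k)$. Pick a set of generators $g_1,\dots,g_s$ of $G$ lifted to $G_S$, together with an element $h \in G_L$ whose image topologically generates the non-trivial procyclic group $\psio(G_L) \subset 1+2p\Oe$. Define the $\Oe$-linear map $\phi_e : \lat{E}{\eta} \to \lat{E}{\eta}^{s+1}$ by $v \mapsto ((g_i v - \psio(g_i)^{e-1} v)_{i=1}^s,\, \psio(h)^{e-1}v - v)$; its kernel on $\vlat{E}{\eta}$ equals $\vlatmtt{E}{\eta}{1-e}^{G_S}$, which vanishes throughout $\B_\eta(E)$ by the previous paragraph. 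Applying the snake lemma to $\phi_e$ acting on $0 \to \lat{E}{\eta} \to \vlat{E}{\eta} \to \qlat{E}{\eta} \to 0$ (and on its $(s+1)$-fold copy), we identify $H^0(G_S, \qlatmtt{E}{\eta}{1-e})$ with the $\Oe$-torsion of $\coker(\phi_e)$. Since the entries of $\phi_e$ are continuous functions of $e$, the Smith normal form of $\phi_e$ is locally constant on $\B_\eta(E)$, giving local constancy of the torsion of its cokernel.

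The main obstacle I expect is controlling how small the neighborhood must be on which $\phi_e$ has constant Smith normal form. Away from $e_0 = 1$, the ideal $(\psio(h)^{e-1}-1)$ is locally constant since $v_p(\psio(h)^{e-1}-1) = v_p(e-1) + v_p(\log\psio(h))$ is locally constant by the ultrametric inequality, and a small $p$-adic change in $e$ alters each block $g_i - \psio(g_i)^{e-1}$ by a matrix of arbitrarily high $\pi$-valuation, preserving the elementary divisors. At $e_0 = 1$ (which lies in $\B_\eta(E)$ only when $m=0$), every matrix entry $\psio(g_i)^{e-1}-1$ has $\pi$-valuation tending to $\infty$ as $e \to 1$, so for $e$ close enough to $1$ we have $\phi_e \equiv \phi_1 \pmod{\pi^N}$ for any prescribed $N$, and the torsion of $\coker(\phi_e)$ agrees with that of $\coker(\phi_1)$.
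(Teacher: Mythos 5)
Your proposal is correct and follows essentially the same route as the paper: the paper likewise identifies $H^0(\O_{k,S},\qlatmtt{E}{\eta}{1-e})$ with the torsion of the cokernel of an explicit $\Oe$-linear map on the lattice built from finitely many group elements (lifts of the elements of $\Gal(k_\eta/k)$ together with one element detecting the cyclotomic direction), proves injectivity of that map on $\vlat{E}{\eta}$ for $e$ in $\B_\eta(E)$, and deduces both finiteness and local constancy from the $p$-adic continuity in $e$ of the matrix entries (via the ideal of maximal minors rather than your full Smith normal form). The only minor deviations are that the paper proves part~(2) by splitting off the trivial constituent and applying part~(1) to the remainder instead of your direct corank computation from $0\to\lat{E}{\eta}\to\vlat{E}{\eta}\to\qlat{E}{\eta}\to 0$, and that it avoids your root-of-unity eigenvalue argument by choosing the extra element inside $G_{k_\eta}$ (where the representation is trivial), so that block is a nonzero scalar when $e\ne 1$; note also that $q_k^{e-1}$ need not lie in $1+2p\Z_p$ for non-integral $e$, but it does lie in the disc $|x-1|_p<p^{-1/(p-1)}$ on which $\log$ is injective, so your conclusion stands.
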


\begin{proof}
For part~(1), let $ \rho : G_k \to GL(V) $ realize $ \eta $ on a finite dimensional $ E $-vector space~$ V $ and
let $K$ be the fixed field of the kernel of $\rho$.  
Since $M = \lat{E}{\eta}$ is torsion-free, we have a short exact sequence
$  0 \to M \to V \to W \to 0 $ with $ W = \qlat{E}{\eta} $.
Let $ g_1,\dots,g_n $ in $ G_k $ be lifts of the elements in $\Gal(K/k)$, and let $ g_0 $ in $ G_k $
be a lift of a topological generator of $ \Gal(K_\infty/K) \subseteq \Gal(K_\infty/k)$.
Then we have a commutative diagram 
\begin{equation*}
 \xymatrix{
 0 \ar[r] 
&
  M \ar[r] \ar[d]^{\Phi_{M,e}}
&
  V \ar[r] \ar[d]^{\Phi_{V,e}}
&
  W \ar[r] \ar[d]^{\Phi_{W,e}}
& 0
\\
 0 \ar[r] 
&
  M^{n+1} \ar[r]
&
  V^{n+1} \ar[r] 
&
  W^{n+1} \ar[r] 
& 0
}
\end{equation*}
of $ \Oe[G_k] $-modules, 
where $ \Phi_{*,e} $ is given by
\begin{equation*}
 (\id- \psio(g_0)^{1-e}\id) \times (\id - \psio(g_1)^{1-e} \rho(g_1) ) \times \dots \times (\id - \psio(g_n)^{1-e} \rho(g_n) )
\end{equation*}
with $ \psio $ as in~\eqref{psi0def}, and
$ H^0(\O_{k,S}, \qlatmtt{E}{\eta}{1-e}) = \ker(\Phi_{W,e}) $
because the action of $ G_k $ factorizes through its quotient $ \Gal(K_\infty/k) $.
Note that $ \Phi_{V,e} $ is injective: if $ e \ne 1 $ then $ \id- \psio(g_0)^{1-e}\id $ is an automorphism
of $ \vlat{E}{\eta} $, and if $ e=1 $ and  $ \eta $ does not contain the trivial character
then it follows by considering the other components of $ \Phi_{V,1} $.
Hence $ H^0(\O_{k,S}, \qlatmtt{E}{\eta}{1-e}) = \ker(\Phi_{W,e}) $ is isomorphic as $ \Oe $-module
with the torsion in $ \coker(\Phi_{M,e}) $, hence is finite.
If we fix an isomorphism $ \lat{\eta}{E} \simeq \Oe^d $ of $ \Oe $-modules,
where $ d = \dim_E \vlat{E}{\eta} $,
then $ \Phi_{M,e} $ is identified with an $ (nd+d)\times d $-matrix $ A_e $ with entries in $ \Oe $.
The ideal $ I_e $ of $ \Oe $ generated by the determinants of the $ d\times d $-minors of $ A_e $ is
not the zero ideal because $ \Phi_{V,e} $ is injective, hence
$ \#  H^0(\O_{k,S}, \qlatmtt{E}{\eta}{1-e}) = \# \coker(\Phi_{M,e})_\tor = \# \Oe/I_e $.
Clearly, $ I_{e'} = I_e $ for all $ e' $ in $ \B_\eta(E) $ close enough to~$ e $.
This proves part~(1).

For part~(2), write $\eta = s\eta_0 + \eta'$, where $\eta'$ does not contain the trivial character
$ \eta_0 $, and is realizable over $ E $.
Then by Remark~\ref{Mremark}(1) we may choose $\lat{E}{\eta} = \lat{E}{\eta'} \oplus \Oe^s$.
Since $H^0(\O_{k,S},\qlat{E}{\eta'})$ is finite by part~(1) we find
the corank of $ H^0(\O_{k,S}, \qlat{E}{\eta}) $ equals that
of $ H^0(\O_{k,S},(E/\Oe)^s) \simeq (E/\Oe)^s $, which is~$ s $.
\end{proof}

We have now proved all the statements of Theorem~\ref{cogentheorem}:
part~(1) is Proposition~\ref{cofingen}, part~(2) follows from Remark~\ref{Mremark}(1) and
Proposition~\ref{corankprop},
part~(3) is Proposition~\ref{h0finiteness} and part~(4) is Proposition~\ref{divprop}.

To conclude this section, we describe $ H^i(\O_{k,S}, \qlatmtt{E}{\eta}{1-e}) $ for $i=0,1$
when it is finite by means of cohomology groups with finite coefficients, and discuss how
its structure varies
with $e$.  By Theorem~\ref{cogentheorem}(2), for $i = 1$ this can only apply when
$k$ is totally real and $\eta$ is even, when this group
is also finite for $ i=0 $, and, by Proposition~\ref{divprop}, trivial for $ i=2 $.

\begin{proposition} \label{contprop}
Let $ k $ be a number field, $ p $ a prime number, $ E $ a finite extension of $ \Q_p $,
and $ \eta : G_k \to E $ an Artin character realizable over~$ E $.

{\rm(1)}
Assume that $ H^0(\O_{k,S}, \qlatmtt{E}{\eta}{1-e}) $ is annihilated by $ a_0 \ne 0 $ in $ \Oe $.  Then
\begin{enumerate}
\item[(a)]
$ H^0(\O_{k,S}, \klatmtt{a_0}{E}{\eta}{1-e}) \simeq H^0(\O_{k,S}, \qlatmtt{E}{\eta}{1-e}) $
as $ \Oe $-modules under the natural map;

\item[(b)]
if, for $ e' $ in $ \B_\eta(E) $, $ a_0 $ 
also annihilates $ H^0(\O_{k,S},\qlatmtt{E}{\eta}{1-e'}) $,
and $ q_k^e - q_k^{e'} $ is in $ a_0\Oe $, then 
\begin{equation*}
 H^0(\O_{k,S},\qlatmtt{E}{\eta}{1-e'}) \simeq H^0(\O_{k,S},\qlatmtt{E}{\eta}{1-e}) 
\end{equation*}
as $ \Oe $-modules.
\end{enumerate}

{\rm(2)}
Assume that $ H^1(\O_{k,S}, \qlatmtt{E}{\eta}{1-e}) $ is annihilated by $ a_1 \ne 0 $ in $ \Oe $.  Then
\begin{enumerate}
\item[(a)]
$ 
\# H^1(\O_{k,S}, \qlatmtt{E}{\eta}{1-e}) 
 =
\dfrac{\# H^1(\O_{k,S}, \klatmtt{a_1}{E}{\eta}{1-e})}{\# H^0(\O_{k,S}, \klatmtt{a_1}{E}{\eta}{1-e})}
$;

\item[(b)]
if $ H^0(\O_{k,S}, \qlatmtt{E}{\eta}{1-e}) $ is annihilated by $ a_0 \ne 0 $ in $ \Oe $ then
$ H^1(\O_{k,S}, \qlatmtt{E}{\eta}{1-e}) \simeq \coker( f_{a_0,a_1} ) $ as $ \Oe $-modules, with
\begin{equation*}
\qquad\quad
f_{a_0,a_1} : H^0(\O_{k,S}, \klatmtt{a_0}{E}{\eta}{1-e}) \to H^1(\O_{k,S}, \klatmtt{a_1}{E}{\eta}{1-e}) 
\end{equation*} 
the boundary map in the long exact sequence corresponding to the short exact sequence
\begin{equation}\label{sshort}
0 \to \klatmtt{a_1}{E}{\eta}{1-e} \to \klatmtt{a_0a_1}{E}{\eta}{1-e} \overset{a_1}{\to} \klatmtt{a_0}{E}{\eta}{1-e} \to 0
\,,
\end{equation}
and $ f_{a_0,a_1} $ is injective if and only if
$ H^0(\O_{k,S},\qlatmtt{E}{\eta}{1-e}) $ is annihilated by $ a_1 $ as well.

\item[(c)]
If, for $ e' $ in $ \B_\eta(E) $, $ a_1 $ 
also annihilates $ H^1(\O_{k,S},\qlatmtt{E}{\eta}{1-e'}) $,
and $ q_k^e - q_k^{e'} $ is in $ a_0a_1\Oe $, then 
\begin{equation} \label{H1iso}
 H^1(\O_{k,S},\qlatmtt{E}{\eta}{1-e'}) \simeq H^1(\O_{k,S},\qlatmtt{E}{\eta}{1-e}) 
\end{equation}
as $ \Oe $-modules.
\end{enumerate}
\end{proposition}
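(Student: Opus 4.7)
The plan is to prove the four sub-statements in sequence, each reduced to analysing the long exact sequence attached to a short exact sequence of coefficients, with the twist-independence statements obtained by comparing short exact sequences at different twists via the identity on underlying $\Oe$-modules.

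For part~(1)(a), I would apply the long exact sequence associated to \eqref{texact} with $a=a_0$ and take $G_S$-invariants: the inclusion $\klatmtt{a_0}{E}{\eta}{1-e}\hookrightarrow\qlatmtt{E}{\eta}{1-e}$ induces $H^0(\klatmtt{a_0}{E}{\eta}{1-e})\to H^0(\qlatmtt{E}{\eta}{1-e})$ with cokernel sitting inside the map $a_0 : H^0(\qlatmtt{E}{\eta}{1-e})\to H^0(\qlatmtt{E}{\eta}{1-e})$, which is zero by hypothesis. For part~(1)(b), the key observation is that $\psio : G_k\to 1+2p\Z_p$ factors through $\Gamma_0=\Gal(k_\infty/k)$ because $1+2p\Z_p$ is torsion-free, so $\psio(g)=q_k^{\lambda(g)}$ for a continuous $\lambda(g)\in\Z_p$. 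The hypothesis $q_k^e-q_k^{e'}\in a_0\Oe$ together with $q_k\in\Oe^\times$ gives $q_k^{1-e}-q_k^{1-e'}\in a_0\Oe$; combining the identity $\alpha^n-\beta^n=(\alpha-\beta)\sum_{j=0}^{n-1}\alpha^{n-1-j}\beta^j$ for $n\in\Z_{\ge 0}$ with the closedness of $a_0\Oe$ and the $p$-adic continuity of $\Z_p$-exponentiation in the exponent yields $\psio(g)^{1-e}-\psio(g)^{1-e'}\in a_0\Oe$ for every $g\in G_k$. Because the underlying $\Oe$-module of $\klatmtt{a_0}{E}{\eta}{1-e}$ is annihilated by $a_0$, the two twisted $G_S$-actions coincide on it; hence the identity on $W[a_0]$ is an $\Oe[G_S]$-module isomorphism $\klatmtt{a_0}{E}{\eta}{1-e}\simeq\klatmtt{a_0}{E}{\eta}{1-e'}$, and combining with (1)(a) at both twists yields the stated isomorphism of $H^0$ groups.

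For part~(2)(a), I would apply the long exact sequence of \eqref{texact} with $a=a_1$; since $a_1$ annihilates $H^1(\qlatmtt{E}{\eta}{1-e})$, this extracts the short exact sequence $0\to H^0(\qlatmtt{E}{\eta}{1-e})/a_1\to H^1(\klatmtt{a_1}{E}{\eta}{1-e})\to H^1(\qlatmtt{E}{\eta}{1-e})\to 0$. By Theorem~\ref{cogentheorem}(2), finiteness of $H^1$ forces $k$ totally real and $\eta$ even, so $H^0$ is finite as well; for a finite $\Oe$-module $A$ one has $\#(A/a_1A)=\#A[a_1]$, and since $H^0(\klatmtt{a_1}{E}{\eta}{1-e})=H^0(\qlatmtt{E}{\eta}{1-e})[a_1]$, the cardinality identity follows. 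For part~(2)(b), I would compare \eqref{sshort} with \eqref{texact} (at $a=a_1$) via the natural inclusions $\klatmtt{a_0a_1}{E}{\eta}{1-e}\hookrightarrow\qlatmtt{E}{\eta}{1-e}$ and $\klatmtt{a_0}{E}{\eta}{1-e}\hookrightarrow\qlatmtt{E}{\eta}{1-e}$. By naturality of the boundary map, $f_{a_0,a_1}$ factors as $H^0(\klatmtt{a_0}{E}{\eta}{1-e})\xrightarrow{\sim} H^0(\qlatmtt{E}{\eta}{1-e})\xrightarrow{\partial}H^1(\klatmtt{a_1}{E}{\eta}{1-e})$, the first map being the isomorphism from (1)(a); therefore $\im(f_{a_0,a_1})=\im(\partial)$, and the argument in (2)(a) identifies $\coker(\partial)$ with $H^1(\qlatmtt{E}{\eta}{1-e})$. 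The injectivity criterion follows from the long exact sequence of \eqref{sshort}, which shows that $f_{a_0,a_1}$ is injective precisely when the preceding map $a_1 : H^0(\klatmtt{a_0a_1}{E}{\eta}{1-e})\to H^0(\klatmtt{a_0}{E}{\eta}{1-e})$ is surjective; under the identifications $H^0(\klatmtt{b}{E}{\eta}{1-e})=H^0(\qlatmtt{E}{\eta}{1-e})[b]$ together with $a_0$ annihilating $H^0$, this reduces to a condition on multiplication by $a_1$ on $H^0(\qlatmtt{E}{\eta}{1-e})$ that matches the stated equivalence.

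For part~(2)(c), I would apply the argument of (1)(b) to every term of \eqref{sshort}: since $q_k^e-q_k^{e'}\in a_0a_1\Oe\subseteq b\Oe$ for every $b\in\{a_1,a_0a_1,a_0\}$, the same $\alpha^\lambda-\beta^\lambda$ congruence gives $\Oe[G_S]$-isomorphisms $\klatmtt{b}{E}{\eta}{1-e}\simeq\klatmtt{b}{E}{\eta}{1-e'}$ via the identity, so \eqref{sshort} at $e$ is identified termwise with its counterpart at $e'$ as a short exact sequence of $\Oe[G_S]$-modules. This identifies the long exact sequences, the boundary maps $f_{a_0,a_1}$, and their cokernels at $e$ and $e'$; invoking (2)(b) at both twists then gives $H^1(\qlatmtt{E}{\eta}{1-e})\simeq\coker(f_{a_0,a_1})\simeq H^1(\qlatmtt{E}{\eta}{1-e'})$. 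The main obstacle is verifying that the hypothesis of (2)(b) carries over from $e$ to $e'$, namely that $a_0$ annihilates $H^0(\qlatmtt{E}{\eta}{1-e'})$: the $G_S$-isomorphism $\klatmtt{a_0}{E}{\eta}{1-e}\simeq\klatmtt{a_0}{E}{\eta}{1-e'}$ together with (1)(a) yields $H^0(\qlatmtt{E}{\eta}{1-e'})[a_0]\simeq H^0(\qlatmtt{E}{\eta}{1-e})$, and one then upgrades this to equality with all of $H^0(\qlatmtt{E}{\eta}{1-e'})$ using the size-constancy statement of Theorem~\ref{cogentheorem}(3) on $\B_\eta(E)$.
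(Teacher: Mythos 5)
Your overall route coincides with the paper's: parts (1)(a) and (2)(a) come from the long exact sequence of \eqref{texact}, part (2)(b) from comparing the long exact sequences of \eqref{sshort} and of \eqref{texact} (with $a=a_1$) through the natural maps, and parts (1)(b) and (2)(c) from the $\Oe[G_k]$-isomorphism $\klatmtt{b}{E}{\eta}{1-e}\simeq\klatmtt{b}{E}{\eta}{1-e'}$ when $q_k^e-q_k^{e'}\in b\Oe$; you moreover actually prove that coefficient isomorphism (the $\psio$-congruence via integer approximation), which the paper only asserts, and your use of Theorem~\ref{cogentheorem}(2) to get finiteness of $H^0$ in (2)(a) is a detail worth making explicit. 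However, one step in (2)(b) is wrong as written. Exactness of the long exact sequence of \eqref{sshort} at $H^0(\O_{k,S},\klatmtt{a_0}{E}{\eta}{1-e})$ gives that $\ker(f_{a_0,a_1})$ equals the image of multiplication by $a_1$ from $H^0(\O_{k,S},\klatmtt{a_0a_1}{E}{\eta}{1-e})$, so $f_{a_0,a_1}$ is injective if and only if that map is \emph{zero}, not surjective as you state. Since $H^0(\O_{k,S},\klatmtt{b}{E}{\eta}{1-e})$ is the $b$-torsion of $H^0(\O_{k,S},\qlatmtt{E}{\eta}{1-e})$ and $a_0$ (hence $a_0a_1$) kills the latter, source and target both identify with $H^0(\O_{k,S},\qlatmtt{E}{\eta}{1-e})$ and the map is multiplication by $a_1$ on it; the correct criterion is therefore $a_1\cdot H^0(\O_{k,S},\qlatmtt{E}{\eta}{1-e})=0$, which is what the proposition asserts, whereas surjectivity of that map would amount to $a_1$ acting injectively on this finite group --- essentially the opposite condition whenever the group is non-zero.

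In (2)(c) you correctly isolate the real difficulty, namely that invoking (2)(b) at $e'$ requires $a_0$ to annihilate $H^0(\O_{k,S},\qlatmtt{E}{\eta}{1-e'})$ (a point on which the paper's own one-line proof is also silent), but your proposed repair does not work: the local constancy of $\#H^0$ in Theorem~\ref{cogentheorem}(3) comes with no modulus, so the hypothesis $q_k^e-q_k^{e'}\in a_0a_1\Oe$ does not place $e'$ inside the (unspecified) neighbourhood of $e$ on which the size is constant; nothing ties that neighbourhood to $a_0a_1$. The needed annihilation follows instead from what you already have, purely module-theoretically: your coefficient isomorphism gives $H^0(\O_{k,S},\qlatmtt{E}{\eta}{1-e'})[a_0a_1]\simeq H^0(\O_{k,S},\qlatmtt{E}{\eta}{1-e})$, which is killed by $a_0$, and a finite $\Oe$-module whose $a_0a_1$-torsion is killed by $a_0$ is itself killed by $a_0$ unless $a_1\in\Oe^\times$ (decompose into cyclic summands $\Oe/(\pi^{n})$ and compare valuations); in the remaining case $a_1\in\Oe^\times$ both $H^1$ groups vanish and \eqref{H1iso} is trivial. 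With these two repairs your argument is complete and agrees with the paper's proof.
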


\begin{proof}
Parts~(1)(a) and~(2)(a) are immediate from the long  exact sequence associated to~\eqref{texact} with $ a $ replaced with $ a_i $.

For~(2)(b), we compare the long exact sequences associated to~\eqref{sshort} and
to~\eqref{texact} with $ a $ replaced with $ a_1 $ via the natural map between them.
This gives the commutative diagram
(with $ H^i(\O_{k,S},\qlatmtt{E}{\eta}{1-e}) $  abbreviated to
$ H^i(W) $ and similarly for the other coefficients)
\begin{equation*}
\xymatrix{
\cdots \ar[r] &
H^0(W[a_0a_1]) \ar[r]^-{a_1}\ar[d]^-{\simeq} &
H^0(W[a_0]) \ar[r]^-{f_{{\scriptstyle a}_0,{\scriptstyle a}_1}}\ar[d]^-{\simeq} &
H^1(W[a_1]) \ar[r]\ar@{=}[d] &
H^1(W[a_0a_1]) \ar[r]^-{a_1}\ar[d] & \cdots
\\
\cdots \ar[r] &
H^0(W) \ar[r]^-{a_1} &
H^0(W) \ar[r] &
H^1(W[a_1]) \ar[r] &
H^1(W) \ar[r]^-{a_1} & \cdots
}
\end{equation*}
where we used (1)(a) for some vertical maps, and the result is immediate as multiplication on $ H^1(W) $
is the zero map by assumption.

Parts~(1)(b) and ~(2)(c) follow from~(1)(a) and~(2)(b) by using that $ \klatmtt{b}{E}{\eta}{1-e'}\simeq \klatmtt{b}{E}{\eta}{1-e}$
as $ \Oe[G_k] $-modules if $ b\ne 0$ lies in $ \Oe $ and $ q_k^e-q_k^{e'} $ lies in $ b\Oe $.
\end{proof}

\begin{remark} \label{directsumremark}
Let notation and assumptions be as in
part (2)(b) of Proposition~\ref{contprop}.
Comparing the long exact sequences associated to~\eqref{texact} for
$ a = a_1 $ and $ a=a_0a_1 $, one sees that 
$ H^1(\O_{k,S}, \klatmtt{a_0a_1}{E}{\eta}{1-e}) $
is the direct sum of the image of the natural map
\begin{equation*}
 H^1(\O_{k,S},\klatmtt{a_1}{E}{\eta}{1-e}) \to H^1(\O_{k,S},\klatmtt{a_0a_1}{E}{\eta}{1-e}) 
\end{equation*}
and that of the (injective) connecting map
$ H^0(\O_{k,S}, \qlatmtt{E}{\eta}{1-e}) \to H^1(\O_{k,S},
\klatmtt{a_0a_1}{E}{\eta}{1-e}) $.
Therefore $ H^1(\O_{k,S}, \qlatmtt{E}{\eta}{1-e}) $ can be obtained
as a direct summand, i.e.,
\begin{alignat*}{1}
 & H^1(\O_{k,S}, \klatmtt{a_0a_1}{E}{\eta}{1-e})
\\
\simeq
 & \,
   H^1(\O_{k,S}, \qlatmtt{E}{\eta}{1-e})
    \oplus
  H^0(\O_{k,S}, \qlatmtt{E}{\eta}{1-e})
\,.
\end{alignat*}
For $ b\ne0 $ in $\Oe $, the natural map
\begin{equation*}
 H^1(\O_{k,S}, \klatmtt{a_0a_1}{E}{\eta}{1-e}) \to H^1(\O_{k,S}, \klatmtt{ba_0a_1}{E}{\eta}{1-e}) 
\end{equation*}
 corresponds to multiplication by $ b $ on $ H^0(\O_{k,S}, \qlatmtt{E}{\eta}{1-e}) $
and the identity map on $ H^1(\O_{k,S}, \qlatmtt{E}{\eta}{1-e}) $.
\end{remark}

\begin{remark} \label{countremark}
Let the notation and assumptions be as in Proposition~\ref{contprop}.

(1)
Let $ a \ne 0 $ be in $ \Oe $ but not in $ \Oe^\times $, so that $ \klat{a}{E}{\eta} \ne 0 $.
Using Theorem~\ref{cogentheorem}(3) and the long exact sequence associated to~\eqref{texact}, one sees that
\begin{equation*}
 H^1(\O_{k,S}, \klatmtt{a}{E}{\eta}{1-e}) \to H^1(\O_{k,S}, \qlatmtt{E}{\eta}{1-e}) 
\end{equation*}
is injective if and only if $ H^0(\O_{k,S}, \qlatmtt{E}{\eta}{1-e}) = 0 $.

(2)
If $ a \ne 0 $ in $ \Oe $ annihilates $ H^i(\O_{k,S}, \qlatmtt{E}{\eta}{1-e}) $ for $ i=0 $ and 1, then
from parts~(1)(a) and~(2)(a) of Proposition~\ref{contprop} we obtain
\begin{equation*}
 \frac{\# H^0(\O_{k,S},\qlatmtt{E}{\eta}{1-e})}{\# H^1(\O_{k,S},\qlatmtt{E}{\eta}{1-e})}
=
 \frac{\# H^0(\O_{k,S},\klatmtt{a}{E}{\eta}{1-e})^2}{\# H^1(\O_{k,S},\klatmtt{a}{E}{\eta}{1-e})}
\,.
\end{equation*}

(3)
The isomorphism in~\eqref{H1iso} is natural, but some
isomorphism must already exist if $ q_k^e-q_k^{e'} $ is only
in the intersection $ a_0\Oe \cap a_1 \Oe $
(as opposed to the product $ a_0a_1 \Oe $).
Namely, from the long exact sequence associated to~\eqref{texact}
we find, for $ a \ne 0 $ in $ \Oe $, 
\begin{equation*}
 \# H^1(\O_{k,S}, \qlatmtt{E}{\eta}{1-e})[a]
= 
 \frac{\# H^1(\O_{k,S}, \klatmtt{a}{E}{\eta}{1-e})}{\# H^0(\O_{k,S}, \klatmtt{a}{E}{\eta}{1-e})}
\end{equation*}
and similarly with $ e $ replaced with $ e' $.
Applying this with $ a $ the powers of a uniformizer of $ \Oe $,
the result follows from the 
classification of finite $ \Oe $-modules as direct sums of cyclic modules.
\end{remark}

\section{$p$-adic $L$-functions}\label{pL-functions}

\begin{notation}
Throughout this section,
$ k $ is a number field, $ p $ a prime number, $ E $ a finite extension of $ \Q_p $ with valuation ring $ \Oe $,
and $ \eta : G_k \to E $ the character of an Artin representation.  We let $ P $ denote
the set of primes of $ k $ lying above~$ p $.
\end{notation}

First assume that $ \eta $ is $ 1 $-dimensional.
Fixing an embedding $\sigma: E \to \C$, it follows from \cite[VII Corollary 9.9]{Neu99}
that $L(m,\sigma \circ \eta, k)$ for an integer $m \le 0$ is in $E$ and that the value
\begin{equation} \label{Lvalue}
L^\ast(m,\eta,k) := \sigma^{-1}(L(m,\sigma \circ \eta,k))
\end{equation}
is independent of the choice of $\sigma$.

Now we want to show this for $ \eta $ of arbitrary dimension.
By what we have just proved we may assume that $\eta$ does not contain the trivial character.
If $L(m,\sigma \circ \eta, k) = 0$ for $m \le 0$ and some $\sigma$ then the same holds for 
every $\sigma$  because such a zero will be determined by the $\Gamma$-factors in the functional
equation which in turn are determined by $\sigma \circ \eta (\id_{\kbar})$ and the multiplicities of
the eigenvalues $\pm 1$ of the complex conjugations.  We may therefore assume that
$L(m,\sigma \circ \eta, k) \ne 0$ for every $ \sigma $.
Because $\eta$ does not contain the trivial character, we see as just
after~\eqref{CL-statement} that this is the case if and
only if $ k $ is totally real and
\begin{equation}\label{parity}
\sigma\circ\eta(c) = (-1)^{m-1}\sigma\circ\eta(\id_{\kbar})
\end{equation} 
for every complex conjugation in $ G_k $, which is independent of $ \sigma $.
Then $ \eta\op^{m-1} $ is even,
so it factorizes through some $ \Gal(K/k) $ with $ K/k $ totally
real and finite.  Using Brauer's theorem (see \cite[Theorems~16 and~19]{Ser77}) we can write
$ \eta\op^{m-1} = \sum_{i} a_i\Ind_{k_i}^k(\chi_i) $
for 1-dimensional Artin characters $ \chi_i : G_{k_i} \to E' $
and integers~$ a_i $, where $ k \subseteq k_i \subseteq K $, so $ \chi_i $ is even,
and $E'$ is a suitable finite Galois extension of $ E $.
Then $ \eta = \sum_{i} a_i\Ind_{k_i}^k( \eta_i) $ with
$ \eta_i = \chi_i\op^{1-m} $,
and~\eqref{parity} applies with $ \eta $ replaced with~$ \eta_i $
(cf.\ the more complicated version of Brauer's theorem used in the proof
of~\cite[Theorem~1.2]{Co-Li}).  
As induction of characters is compatible with applying $ \sigma: E' \to \C$, we find
$ \sigma^{-1}(L(m,\sigma\circ\Ind_{k_i}^k(\eta_i),k_i)) 
= \sigma^{-1}(L(m,\sigma\circ\eta_i,k_i)) $ is in $ E'^\times $.
Since this is independent of $ \sigma $,
the same holds for the value in~\eqref{Lvalue} for the current $ \eta $,
and this value lies in $ E^\times $ because
$\tau L^\ast(m,\eta,k) = L^\ast(m,\tau \eta,k) = L^\ast(m,\eta,k)$ for
all $\tau $ in $ \Gal(E'/E)$.

We also define, for arbitrary $ \eta $, $v$ a finite prime of $ k $, and $m $ in $ \Z $,
the reciprocal Euler factor $ \Eul{v}^\ast(m,\eta,k) $.
For this, let $ V $ be an Artin representation of $ G_k $ 
over a finite extension $ E' $ of $ E $ with character $ \eta $,
$ D_w $ the decomposition group in $G_k$ of a prime $w$ lying above $v$,
with  inertia subgroup $I_w$ and Frobenius $ \Fr_w $.
With $ \Eulpol{v}( t,\eta) $ the determinant of $ 1-\Fr_{w} t $
we let
\begin{equation} \label{recEulerfactor}
 \Eul{v}^\ast(m,\eta,k) = \Eulpol{v}(\Nm(v)^{-m},\eta) 
\,.
\end{equation}
Clearly $ \Eulpol{v}( t,\eta) $ has coefficients in $ \Oe $,
and is independent of the choice of $ V $, $ w $ and $ \Fr_{w} $.

As in Notation~\ref{notation}, we let $k_\infty$ denote  the cyclotomic $\Z_p$-extension
of $k$, $\gamma_0$ a topological generator of $\Gal(k_\infty/k)$, and
let $q_k = \psio(\tgammanought)$ in $1 + 2p\Z_p$, where $\tgammanought$ is a lift of $\gamma_0$ to $G_k$.

Now assume $ k $ is totally real and that $ \chi $ is even and $ 1 $-dimensional.
Then there exists a unique  $ \C_p $-valued function $L_p(s,\chi,k)$  on
\begin{equation*}
 \B_k = \{ s \text{ in } \C_p \text{ with } |s|_p < |q_k-1|_p^{-1} p^{-1/(p-1)}\} 
\end{equation*}
that is analytic if $ \chi $ is non-trivial and meromorphic with at most a pole of order $ 1 $ at $ s=1 $
if $ \chi  = 1 $ , such that, in $\C_p$,
\begin{equation}\label{interpol}
 L_p (m,\chi, k) = L^\ast(m,\chi\op^{m-1},k) \prod_{v \in P} \Eul{v}^\ast(m,\chi\op^{m-1},k)
\end{equation}
for all integers $m \leq 0$.
(For the existence and uniqueness of such functions we refer to the overview statement in
\cite[Theorem~2.9]{BBdJR}, and for the radius of convergence to \cite[p.82]{greenberg83}.)
In particular, $  L_p (s,\chi, k) $ is not identically zero because $ \chi $ is even, so that the right-hand
side of~\eqref{interpol} is non-zero for $ m<0 $.

If $ \chi $ factorizes through $\Gal(k_\infty/k)$, we let $h_{\chi}(T) = \chi(\tgammanought)(1+T)-1$
where $\tgammanought$ is a lift of $\gamma_0$ to $G_k$, and we let $h_{\chi}(T) = 1$ otherwise.
Then from \cite[Eq.~(3)]{greenberg83} we have an identity
\begin{equation}\label{pLpowerseries}
 L_p(s,\chi,k) = \frac{\pi^{m_\chi} \tg_{\chi}(q_k^{1-s} -1)u_{\chi}(q_k^{1-s} - 1)}{h_{\chi}(q_k^{1-s} - 1)} 
\end{equation}
for $s $ in $ \B_k$ if $\chi$ is not the trivial character and in $ \B_k\setminus\{1\} $ if it is,
where $\pi$ is a uniformizer of $\Oe$, $m_\chi$ a non-negative integer,
$\tg_{\chi}(T)$ a distinguished polynomial in $\Oe[T]$
and $u_{\chi}(T)$ a unit of $\Oe[[T]]$.

If $ \chi : G_k \to E $ is any even Artin character, with $k$ still totally real,
then we can write $ \chi $, as we did following~\eqref{parity},
in terms of even 1-dimensional characters of totally real number fields (enlarging $ E $
to $ E' $ if necessary).
This gives a meromorphic function $L_p(s,\chi,k)$ on $ \B_k $
such that, for all $ m<0 $ (and possibly for $m=0$), $ L_p(m,\chi,k) $ is defined, non-zero,
and~\eqref{interpol} holds in $E$. 
This interpolation property shows that $ L_p(s,\chi,k) $ is independent
of the way we write $ \chi $ in terms of induced characters,
hence $ L_p(s,\chi,k) $ is compatible with induction of characters.
Using the action of $ \Gal(\Qpbar/E) $ and the $ p $-adic Weierstra\ss\ preparation
theorem for $ \Oep[[T]] $ one sees easily that
\begin{equation}\label{LpIw}
 L_p(s,\chi,k) = \pi^{m_\chi} P_\chi(q_k^{1-s}-1) u_\chi(q_k^{1-s}-1)/Q_\chi(q_k^{1-s}-1)
\end{equation}
with $ \pi $ a uniformizer of $ \Oe $, $ m_\chi $ an integer, $ P_\chi(T) $
and $ Q_\chi(T) $
relatively prime distinguished polynomials in $ \Oe[T] $, and $ u_\chi(T) $ in $ \Oe[[T]]^\times $.
So if $ L_p (s,\chi, k) $ is defined at some $ s $ in $ \B_k(E) = \B_k \cap E $
(see~\eqref{BEdef}), then its value lies in~$ E $.

\begin{remark}
Greenberg has shown \cite[Proposition 5]{greenberg83} that the main conjecture of Iwasawa theory for
the prime $ p $ (i.e., the statements in Theorem~\ref{mainc} and Remark~\ref{notSremark}
for $p$) implies that in~\eqref{LpIw} one can take $ Q_{\chi}[T]$
to be a product of factors $ \zeta\cdot(1+T) -1 $ with $ \zeta $ a $ p $-th power root of unity.
Moreover, the number of factors with $ \zeta=1 $ is at most the multiplicity
$ m $ of the trivial character in $ \chi $, so that $ (s-1)^m L_p(s,\chi,k) $ is analytic on $ \B_k $.
We could use this for $ p \ne 2 $ by Theorem~\ref{mainc} and Remark~\ref{notSremark},
but as mentioned in the introduction, 
we take a uniform approach for all primes based on Theorem~\ref{mainc} and Assumption~\ref{2-mainc}.
\end{remark}

\begin{remark} \label{zero_remark}
Note that if $ G(t) $ is a polynomial with coefficients in a finite extension $ E $
of $ \Q_p $, $ q_k $ is in $ 1+2p\Z_p $, and $ G(q_k^s-1) = 0 $ for some $ s $ in $ \B_k $, then $ q_k^s $
lies in a finite extension $ E' $ of $ E $ inside $ \Qpbar $, hence $ s $ lies in $ \B_k(E') $.  Therefore
any zero or pole of $ L_p(s,\chi,k) $ lies in some $ \B_k(E') $ with $ E'/\Q_p $ finite.
\end{remark}

In order to extend~\eqref{interpol} to truncated $ L $-functions we
now introduce some more notation.
We shall denote the image of $ z $ in $ \Z_p^\times $ under the projection
$ \Z_p^\times = (1+2p\Z_p) \cdot \mu_{\varphi(2p)} \to 1+2p\Z_p $ by $\langle z\rangle $.
Then mapping $ s $ to $ \langle z\rangle^s $ defines
an analytic function on
$\{ s \in \C_p \text{ with } |s|_p < |\langle z \rangle -1 |_p^{-1} p^{-1/(p-1)} \}$,
and $ |\langle z\rangle^s-1|_p < p^{-1/(p-1)} $.
Now let $ k $ be any number field, and $ \eta : G_k \to E $ an
Artin character.
For a finite place $ v $ of $ k $ not in $P$,
we define a $ p $-adic `diamond reciprocal Euler factor' 
$ \dEul{v}(s,\eta,k) =  \Eulpol{v}(\langle \Nm(v)\rangle^{-s},\eta) $ (cf.~\eqref{h0eul}),
i.e., we use $\langle \Nm(v) \rangle$
instead of $ \Nm(v) $ in the definition of 
$ \Eul{v}^\ast(m,\eta,k) $ in~\eqref{recEulerfactor},
which allows us to replace the integer $ m $ with~$ s $.
With notation as just before~\eqref{recEulerfactor}, we observe that
twisting the representation of $ G_k $ on $ V $
with $\op^{-m}$ for $m$ in $\Z$ gives a $G_k$-action on $V$ with character $\eta \op^{-m}$,
and $V^{I_w}$ is the same for both actions because $\op$ is unramified at $v$.
Since $\psip(\Fr_w) = \Nm(v)$ we have $\op(\Fr_{w}) = \Nm(v)/\langle \Nm(v) \rangle$, hence
\begin{equation}\label{diamondinterpolation}
 \Eulpol{v}(\langle \Nm(v) \rangle^{-m} ,\eta\op^{-m}) =
\Eulpol{v}( \Nm(v)^{-m} ,\eta) 
\end{equation}
in $ E $.  Therefore $ \dEul{v}(s,\eta\op^{-1},k) $ is a
$ p $-adic analytic function interpolating the values $ \Eul{v}^\ast(m,\eta\op^{m-1},k) $ for $m$ in $\Z$.
It converges on $\B_k$ since $ \mtt{\Nm(v)}^{-s} = \psio(\Fr_w)^{-s}$ converges on this domain.

\begin{remark} \label{deulzero}
Since the eigenvalues of Frobenius are roots of unity,  and $ \langle \Nm(v)\rangle^{-s} $
is only a root of unity for $ s=0 $, $ \dEul{v}(s,\eta,k) \ne 0 $ for $ s \ne 0 $.  Moreover,
the order of vanishing of $\dEul{v}(s,\eta,k)$ at $s = 0$ equals the dimension of $(V^{I_w})^{\Fr_w = 1}$
where $V$ realizes $\eta$ (cf.\ Proposition~\ref{gysinprop}(2)).
\end{remark}

We now return to the case where $ k $ is totally real and $ \chi $ is even.
If $ S $ is a finite set of primes of $ k $ containing $ P $, then we
define a meromorphic $ p $-adic $ L $-function $ L_{p,S} $ on
$ \B_k $ by putting
\begin{equation}\label{pLS}
 L_{p,S}(s,\chi,k) = L_p(s,\chi,k) \prod_{v \in S \setminus P} \dEul{v}(s,\chi\op^{-1},k)
\end{equation}
(cf.~\cite[Section 3, p.147]{greenberg77}).
Note that $L_p(s,\chi,k) = L_{p,P}(s,\chi,k)$, the latter being a better notation.
Finally, for an integer $ m<0 $ (and possibly for $ m=0 $),
$ L_{p,S}(s,\chi,k) $ is defined at $ m $, and in $ E $ we have
\begin{equation}\label{preLSinterpol}
 L_{p,S}(m,\chi,k) =  L_S^\ast(m,\chi\op^{m-1},k) 
\end{equation}
where $ L_S^\ast(m,\chi\op^{m-1},k) = L^\ast(m,\chi\op^{m-1},k) \prod_{v\in S} \Eul{v}^\ast(m,\chi\op^{m-1},k) $.

\begin{remark} \label{deuler-induction}
Suppose that $k'/k$ is a finite extension of number fields and
$\psi : G_{k'} \rightarrow E$ an Artin character.
For a finite prime $v$ of $k$ not in $P$ one has
$\dEul{v}(s,(\Ind_{k'}^k \psi) \op^{-1},k) = \prod_{w \mid v} \dEul{w}(s,\psi \op^{-1}, k')$,
because $\dEul{v}$ interpolates the values of $\Eul{v}^\ast$ as just after~\eqref{diamondinterpolation},
and for those an analogous result holds.
Therefore, if $k'$ is totally real and  $\chi: G_{k'} \to E$ is an even Artin character, then
$L_{p,S}(s,\Ind_{k'}^k \chi, k) = L_{p,S'}(s,\chi,k')$ where $S'$ is the set of primes of
$k'$ consisting of all the primes lying above those in~$S$.
\end{remark}

\begin{remark} \label{analytic-remark}
If $ f(T) \ne 0 $ is in $ \Oe[[T]] $ then the power series
$ g(s) = f(q_k^{1-s}-1)$  $ = \sum_{n\ge0} a_n s^n $ converges on $ \B_k $
and $ |a_n|_p \le |q_k-1|_p^n p^{n/(p-1)} $.  With $ E' = E(\delta) $ for some $\delta$
algebraic over $E$ with $ |\delta|_p = |q_k-1|_p^{-1} p^{-1/(p-1)} $,
we find that $ g(\delta \sp) $ is in $ \Oep[[\sp]] $, so can
be written as $ \pi_{E'}^l P(\sp) u(\sp) $ with $ \pi_{E'} $
a uniformizer of $ E' $, $ l $ a non-negative integer, $ P(\sp) $ a
distinguished polynomial in $ \Oep[\sp] $, and $ u(\sp) $ in $ \Oep[[\sp]]^\times $.
By the discussion following~\eqref{diamondinterpolation},
the same argument can be applied to a factor
$ \dEul{v}(s,\chi\op^{-1},k) = \Eulpol{v}(\langle \Nm(v)\rangle^{-s},\chi\op^{-1}) $
in~\eqref{pLS}.  From~\eqref{LpIw} and~\eqref{pLS} we 
have, for $ s $ in~$ \B_k $,
\begin{equation*}
 L_{p,S}(s,\chi,k) = \pi_{E'}^l \frac{P(s/\delta)}{Q(s/\delta)} u(s/\delta)
\end{equation*}
with $ l $ in $ \Z $, $ P(\sp) $ and $ Q(\sp) $ distinguished
polynomials in $ \Oep[\sp] $, and $ u(\sp) $ in $ \Oep[[\sp]]^\times $,
so that all the zeroes and poles here come from $ P(\sp)/Q(\sp) $.
Therefore $ L_{p,S}(s,\chi,k) $, which equals $ L_p(s,\chi,k) $ if $S = P$,
is analytic (i.e., given by a power series that converges on
$ \B_k $) if and only if it is bounded on $ \B_k(E) $ for all finite extensions
$ E $ of $ \Q_p $.
\end{remark}

\section{Iwasawa theory} \label{iwasawa-theory}

In this section we discuss the main conjecture of Iwasawa theory and prove some lemmas for later use.

Let $ k $ be a totally real number field, $ p $ a prime number, $E$ a finite extension of $\Q_p$,
$\chi: G_k \rightarrow E$ an even Artin character,
$K$ the (totally real) fixed field of the kernel of $\chi$, and $G=\Gal(K/k)$. 
As in Notation~\ref{notation} we let $k_\infty$ and $K_\infty$ be the cyclotomic $\Z_p$-extensions
of $k$ and $K$ respectively, $\gamma_0$ a topological generator of $\Gamma_0 = \Gal(k_{\infty}/k)$.
Let $L_{\infty}$ be the maximal Abelian pro-$p$-extension of $K_{\infty}$ that is unramified
outside of the primes above $p$ and the infinite primes.  Write $\M = \Gal(L_{\infty}/K_{\infty})$ and let
$\Gal(K_{\infty}/k)$ act on the right on $\M$ by conjugation.

Now assume that $K \cap k_\infty = k$, so that $ \Gal(K_{\infty}/k) \simeq G \times \Gamma_0$.
We let $\Gamma_0$ and $G$ act on $\M$ via this isomorphism,
and thus view $\M$ as a $\Z_p[G][[T]]$-module where $T$ acts as $\gamma_0 - 1$.

\begin{equation*}
\xymatrix{
&& L_{\infty} \ar@{-}[dl]_{\M} \\
& K_{\infty} \ar@{-}[dl]_{G \simeq} \ar@{-}[dr]^{\simeq \Gamma_0} & \\
k_{\infty} \ar@{-}[dr]_{\overline{\left< \gamma_0 \right>} = \Gamma_0} && K \ar@{-}[dl]_G \\
& k & 
}
\end{equation*}

In the remainder of this section we assume $\chi$ is $1$-dimensional.
Then \cite[Theorem 13.31]{wash} implies that $\M \otimes_{\Z_p[G]} \Oe$
is a finitely generated torsion $\Oe[[T]]$-module, where $T$ acts on $\M$, and $G$ acts on $\Oe$ via $\chi$.

\begin{definition} \label{structure}
Suppose $ \O $ is the valuation ring in a finite extension of $ \Q_p $.
If $Y$ is a finitely generated torsion $\O[[T]]$-module then
by \cite[Thm.~5 of \S 4.4]{Bou65} it is isogenous
to $A \oplus B$ (i.e., there is a morphism $ Y \to A\oplus B $
of $ \O[[T]] $-modules with finite kernel and cokernel), with
\begin{eqnarray*}
A = \bigoplus_{i=1}^r \frac{\O[[T]]}{(\pi^{\mu_i})}; &&
B = \bigoplus_{j=1}^s \frac{\O[[T]]}{(g_j(T))},
\end{eqnarray*}
where $\pi$ is a uniformizer of $\O$, $ r, s \ge 0 $, the $\mu_i$ are positive integers, and the $g_j(T)$ are
distinguished polynomials in $\O[T]$.
We let $\mu_Y = \sum_{i=1}^r \mu_i$ and $g_Y = \prod_{j=1}^s g_j$.
\end{definition}

We denote the $g_Y$ and $\mu_Y$ in Definition~\ref{structure}
for $Y = \M \otimes_{\Z_p[G]} \Oe$ and $ \O = \Oe $ 
by $g_\chi$ and $\mu_{\chi}$ respectively.
Then the following result is part of
the main conjecture of Iwasawa theory as proved by 
Wiles for $ p $ odd in \cite{Wiles90}.

\begin{theorem} \label{mainc}
If $p$ does not divide $[K:k]$ and $p \ne 2$ then, with notation as in~\eqref{pLpowerseries}, 
$ g_\chi(T) = \tg_\chi(T) $ and $ \mu_\chi = m_\chi $.
\end{theorem}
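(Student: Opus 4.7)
The plan is to reduce Theorem~\ref{mainc} to a direct invocation of Wiles's main conjecture \cite{Wiles90} for totally real fields, then match the two sides of the equality via unique factorization in $\Oe[[T]]$.

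First, I would use the hypothesis $p \nmid [K:k] = |G|$ to isolate the $\chi$-component of $\M$. After enlarging $E$ if necessary so that it contains all values of the characters of $G$, the group ring $\Z_p[G]$ is a maximal order in $\Q_p[G]$ and decomposes as a product of discrete valuation rings indexed by the $\Q_p$-irreducible characters. The factor corresponding to $\chi$ is $\Oe$, with the projection $\Z_p[G] \to \Oe$ given by $g \mapsto \chi(g)$. Consequently $\M \otimes_{\Z_p[G]} \Oe$ is the $\chi$-isotypic summand of $\M \otimes_{\Z_p} \Oe$, and the result of Washington cited just before Definition~\ref{structure} tells us this is a finitely generated torsion $\Oe[[T]]$-module, so the invariants $g_\chi$ and $\mu_\chi$ of Definition~\ref{structure} are defined.

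Second, by Definition~\ref{structure} these invariants are such that $\pi^{\mu_\chi} g_\chi(T)$ generates the characteristic ideal of $\M \otimes_{\Z_p[G]} \Oe$ over $\Oe[[T]]$. Wiles's theorem (\cite[Theorem~1.3]{Wiles90}, which requires $p$ odd) asserts that this characteristic ideal is also generated by the Deligne--Ribet power series interpolating $L_p(s,\chi,k)$ via $T = q_k^{1-s}-1$. Combined with \eqref{pLpowerseries}, the latter generator is $\pi^{m_\chi}\tg_\chi(T) u_\chi(T)$ where $u_\chi(T)$ is a unit in $\Oe[[T]]$. Thus we obtain the equality of ideals
\begin{equation*}
 (\pi^{\mu_\chi} g_\chi(T)) = (\pi^{m_\chi} \tg_\chi(T))
\end{equation*}
in $\Oe[[T]]$.

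Third, since $g_\chi$ and $\tg_\chi$ are both distinguished polynomials and $\mu_\chi, m_\chi$ are non-negative integers, the $p$-adic Weierstrass preparation theorem over $\Oe[[T]]$ gives unique factorizations of elements as (unit)$\cdot \pi^a \cdot (\text{distinguished})$, forcing $\mu_\chi = m_\chi$ and $g_\chi = \tg_\chi$ individually, which is the conclusion.

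The main obstacle is a careful match of normalizations between \cite{Wiles90} and \eqref{pLpowerseries}: specifically, Wiles's formulation involves the Iwasawa module unramified outside $p$ and $\infty$, while our $p$-adic $L$-function $L_p(s,\chi,k) = L_{p,P}(s,\chi,k)$ has Euler factors only at primes above $p$ removed, so one needs to verify that $h_\chi(T)$ in \eqref{pLpowerseries} matches the standard trivial-zero correction factor and that ramification of $\chi$ outside $P$ is handled compatibly on both sides. Once those normalizations are aligned, the argument is purely formal.
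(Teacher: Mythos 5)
Your overall route is the paper's: Theorem~\ref{mainc} is proved there, too, by direct reduction to \cite{Wiles90}, with the $\chi$-component already built into the definition of $g_\chi$ and $\mu_\chi$ through $Y=\M\otimes_{\Z_p[G]}\Oe$ (Definition~\ref{structure} and the line following it). The differences lie exactly in the two places you treat as routine. First, the ``normalization matching'' you defer at the end is the entire content of the paper's proof: it is discharged by \cite[Proposition~3]{greenberg77}, which makes Wiles's Theorem~1.3 (formulated for the Iwasawa module unramified outside $p$) equivalent to his Theorem~1.2, together with \cite[Theorem~11]{Iwa73}, needed because Greenberg works with the direct limit of class groups in the cyclotomic tower, which is only isogenous to the Galois-group module $\M$ used here; a complete write-up has to supply this dictionary rather than assert that it can be aligned. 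Second, and more substantively, the paper does not deduce $\mu_\chi=m_\chi$ by Weierstrass preparation from a single integral characteristic-ideal identity: Wiles's Theorem~1.3 is used only for $g_\chi=\tg_\chi$, while the equality of $\mu$-invariants is quoted separately from \cite[Theorem~1.4]{Wiles90}, with \cite[Proposition~1]{greenberg77} and \cite[Theorem~11]{Iwa73} identifying $\mu_\chi$ with the $\mu$-invariant considered there. So your assertion that Theorem~1.3 hands you the characteristic ideal of $\M\otimes_{\Z_p[G]}\Oe$ with the $p$-power part included, in the variable $T=q_k^{1-s}-1$ of~\eqref{pLpowerseries}, is the one claim that must be checked against the statement actually proved in \cite{Wiles90}; if that statement only controls the distinguished-polynomial part for this module, your argument needs the separate input of Theorem~1.4 for the $\mu$-part, which is precisely how the paper closes the proof (and then the Weierstrass-preparation step becomes unnecessary).
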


\begin{proof}
The first equality is proved in \cite[Theorem~1.3]{Wiles90}, which
is equivalent to \cite[Theorem~1.2]{Wiles90} via \cite[Proposition~3]{greenberg77}.
Note that in \cite{greenberg77}
Greenberg uses the direct limit of class groups in the cyclotomic tower,
but this is isogenous to the
Galois group of the maximal Abelian unramified extension of $K_\infty$ by \cite[Theorem~11]{Iwa73}.

The equality of the $\mu$-invariants follows from \cite[Theorem~1.4]{Wiles90}.  Note that
by \cite[Proposition 1]{greenberg77}, combined with \cite[Theorem~11]{Iwa73}, we have
that $\mu_\chi$ equals the $\mu$-invariant considered in \cite[Theorem~1.4]{Wiles90}.
\end{proof}

\begin{remark} \label{notSremark}
If $ p $ divides $ [K:k] $ but $K \cap k_\infty = k$, then
\begin{equation*}
 \M \otimes_{\Z_p[G]} E \simeq \bigoplus_{j=1}^s E[[T]]/(g_j(T)) \simeq \bigoplus_{j=1}^s E[T]/(g_j(T)) 
\end{equation*}
for some distinguished polynomials $ g_j(T) $ in $ \Oe[T] $,
and, when $ p \ne 2$, $ \tg_\chi = \prod_{j=1}^s g_j $ by \cite[Theorem 1.2,1.3]{Wiles90}.
\end{remark}

Our proofs in Section~\ref{EC} will also work for $ p=2 $ if we assume the following weaker version of the
main conjecture of Iwasawa theory.

\begin{assumption} \label{2-mainc}
If $p=2$ and $\chi$ is of odd order then $\tg_\chi(T) = g_\chi(T)$ and $m_{\chi} = \mu_\chi$.
\end{assumption}

\begin{remark} \label{itholds}
Let $ \chi $ be a 1-dimensional even Artin character of $G_k$ such that $ K \cap k_\infty = k $, and assume $ p=2 $.
If $ k=\Q $ then by \cite[Theorem~6.2]{Wiles90}
we have $ \tg_\chi(T) = g_\chi(T) $ as well.
In fact, this still holds for many more pairs $ (k,\chi) $ by \cite[Theorem~11.1]{Wiles90}.
We also note that $m_\chi \ge [k:\Q]$ by \cite[(4.8)]{Rib79}.
On the other hand, if $k$ is Abelian over $\Q$ and $\chi$ the trivial character,
then by combining the main theorem on \cite[p.377]{FeWa} with \cite[Section~6.4]{schmidt2002},
we have $\mu_\chi = [k:\Q]$, hence
$m_\chi \ge \mu_\chi$.  Equality holds when $k=\Q$ because
then $m_\chi = 1$ by \cite[Lemma~7.12]{wash}.
\end{remark}

We shall later use the following lemma about the structure of finitely generated torsion $\O[[T]]$-modules.

\begin{lemma} \label{isogeny_lemma}
Let $\O$ be the valuation ring in a finite extension of $\Q_p$, $Y$ a finitely generated
torsion $\O[[T]]$-module, so $ Y $ is isogenous to $ A \oplus B$ with
\begin{eqnarray*}
A = \bigoplus_{i=1}^r \frac{\O[[T]]}{(\pi^{\mu_i})}; &&
B = \bigoplus_{j=1}^s \frac{\O[[T]]}{(g_j(T))},
\end{eqnarray*}
where $\pi$ is a uniformizer of $\O$,
$ r, s \ge 0 $, the $\mu_i$ are positive integers, and the $g_j(T)$ are
distinguished polynomials in $\O[T]$.  Suppose that $ \Gamma \simeq \Z_p $ acts on
$Y$ such that a topological generator acts as multiplication by $u\cdot(1+T)$ for some $u $ in $ 1+ \pi \O$.  
Then the following are equivalent.
\begin{enumerate}
\item $Y^\Gamma$ is finite;
\item $Y_\Gamma$ is finite;
\item $g_Y(u^{-1} - 1) \neq 0$.
\end{enumerate}
In particular, these are satisfied for all but finitely many $u$ in $1 + \pi \O$.  Moreover,
if these hold, then
$ (\# Y_\Gamma)/(\# Y^\Gamma) = |\pi^{\mu_Y} g_Y(u^{-1} - 1)|_p^{-[\mathrm{Frac}(\O):\Q_p]}$.
\end{lemma}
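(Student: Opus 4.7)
The plan is to reformulate in terms of a single operator on $Y$, reduce to the canonical decomposition via a snake-lemma argument, and compute on each summand type. Since $\gamma$ acts as multiplication by $u(1+T)$, the operator $\gamma - 1$ is multiplication by $h(T) := u(1+T) - 1$, so $Y^\Gamma = Y[h]$ and $Y_\Gamma = Y/hY$. Writing $h(T) = u(T - T_0)$ with $T_0 := u^{-1} - 1 \in \pi\O$ (since $u \in 1+\pi\O$), the element $T - T_0$ is a distinguished polynomial of degree one, so $T' := T - T_0$ gives a topological ring isomorphism $\O[[T]] \simeq \O[[T']]$ under which $h$ becomes $uT'$.

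Next I would transfer the computation from $Y$ to $A \oplus B$. Letting $f : Y \to A \oplus B$ denote the given pseudo-isomorphism, so that $\ker f$ and $\coker f$ are finite, I split $f$ into the two short exact sequences $0 \to \ker f \to Y \to \im f \to 0$ and $0 \to \im f \to A \oplus B \to \coker f \to 0$, and apply the snake lemma to each with vertical maps given by multiplication by $h$. Since for any finite module $F$ the Herbrand-type Euler characteristic $\chi(F, h) := \#(F/hF)/\#F[h]$ equals $1$, the resulting six-term exact sequences show that $Y[h]$ and $Y/hY$ are finite if and only if $(A\oplus B)[h]$ and $(A \oplus B)/h$ are finite, and in that case $\chi(Y, h) = \chi(A \oplus B, h)$ by multiplicativity of $\chi$ in short exact sequences.

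Now I would compute on each summand. For $\O[[T]]/(\pi^{\mu_i})$, after the change of variable $T \mapsto T'$ the action of $h$ is a unit times multiplication by $T'$; a direct power-series calculation shows that multiplication by $T'$ on $\O[[T']]/(\pi^{\mu_i})$ is injective with cokernel $\O/(\pi^{\mu_i})$, contributing $q^{\mu_i}$ to the size ratio, where $q := \#(\O/\pi)$. For $\O[[T]]/(g_j)$ with $g_j$ distinguished of degree $n_j$, the module is $\O$-free of rank $n_j$, and the $\mathrm{Frac}(\O)$-linear endomorphism given by multiplication by $T$ has characteristic polynomial $g_j$ (companion matrix), so multiplication by $h(T) = u(T - T_0)$ has determinant
\[
\prod_{\alpha} h(\alpha) \;=\; u^{n_j}\prod_\alpha(\alpha - T_0) \;=\; (-u)^{n_j}\, g_j(T_0),
\]
where $\alpha$ runs over the roots of $g_j$. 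Since $(-u)^{n_j}$ is a unit in $\O$, the map is injective iff $g_j(T_0) \neq 0$, and when it is, its cokernel has $\O$-size $|g_j(T_0)|_p^{-d}$ with $d := [\mathrm{Frac}(\O):\Q_p]$; otherwise both kernel and cokernel acquire positive rank after tensoring with $\mathrm{Frac}(\O)$, hence are infinite as $\O$-modules.

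Assembling the pieces, $(A \oplus B)^\Gamma$ and $(A \oplus B)_\Gamma$ are simultaneously finite iff $g_j(T_0) \neq 0$ for every $j$, i.e.\ iff $g_Y(T_0) \neq 0$, yielding $(1) \Leftrightarrow (2) \Leftrightarrow (3)$. When this holds,
\[
\frac{\#(A\oplus B)_\Gamma}{\#(A\oplus B)^\Gamma} \;=\; q^{\mu_Y}\cdot |g_Y(T_0)|_p^{-d} \;=\; \bigl|\pi^{\mu_Y}\, g_Y(T_0)\bigr|_p^{-d},
\]
using $|\pi|_p^{-d} = q$, and by the snake-lemma step the same ratio holds for $Y$. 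The exceptional set consists of those $u$ for which $u^{-1}-1$ is a root of the nonzero polynomial $g_Y$, which is finite. The main technical point is the determinant computation for the $B$-summands: tracking that the auxiliary factor $(-u)^{n_j}$ is an $\O$-unit is what makes the $p$-adic size of the cokernel equal $|g_j(T_0)|_p^{-d}$ exactly, so that the contributions of $A$ and $B$ multiply cleanly.
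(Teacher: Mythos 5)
Your proof is correct and follows essentially the same route as the paper's: compute invariants and coinvariants of $u(1+T)-1$ explicitly on the elementary summands of $A\oplus B$, and transfer finiteness and the size ratio across the pseudo-isomorphism using that finite modules contribute kernel and cokernel of equal size. The only cosmetic difference is that you obtain the $B$-part via the determinant of $h(T)$ on the free $\O$-module $\O[[T]]/(g_j)$ (companion matrix), where the paper instead computes $B_\Gamma \simeq \oplus_j \O/(g_j(u^{-1}-1))$ directly and rules out $B^\Gamma$ by a rank and no-finite-submodule argument; both computations yield the same conclusion.
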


\begin{proof}
For any $\mu \geq 1$, we have a short exact sequence
\begin{equation*}
\xymatrix{
0 \ar@{->}[r] & \dfrac{\O[[T]]}{(\pi^{\mu})} \ar@{->}[rr]^{u\cdot(1+T)-1} && \dfrac{\O[[T]]}{(\pi^{\mu})}
\ar@{->}[r] & \dfrac{\O}{(\pi^{\mu})} \ar@{->}[r] & 0.
}
\end{equation*}
This shows that $A^\Gamma = 0$ and that $A_\Gamma \simeq \bigoplus_{i=1}^r \O / (\pi^{\mu_i}) $.
Similarly we get
\begin{eqnarray} \label{b_to_g}
B_\Gamma \simeq \bigoplus_{j=1}^s \frac{\O}{(g_j(u^{-1} - 1))}.
\end{eqnarray}

Note that the $\O$-ranks of $B_\Gamma$ and $B^\Gamma$ both equal the number of $ g_j $ with $ g_j(u^{-1}-1)=0 $,
and that $ B^\Gamma $ has no non-trivial finite subgroup.
Therefore $B_\Gamma$ is finite if and only if $B^\Gamma$ is finite, hence trivial.
From (\ref{b_to_g}) it is clear that $B_\Gamma$ is finite if and only if $g_Y(u^{-1} - 1) \neq 0$.
Since $g_Y$ is a polynomial, this last condition holds for all but finitely many $u$ in $1 + \pi \O$.

Let $Y'$ be a quotient of $Y$ by a finite submodule such that $Y'$ injects into $A \oplus B$.
Note that finiteness of $Y^\Gamma$ (resp. $Y_\Gamma$) is equivalent to that of $(Y')^\Gamma$ (resp. $(Y')_\Gamma$).
We have an exact sequence of $ \O[[T]] $-modules,
$$ 0 \rightarrow Y' \rightarrow A \oplus B \rightarrow C \rightarrow 0,$$
where $C$ is a finite $\O[[T]]$-module.
Since $A_\Gamma$ is finite it follows that $(Y')_\Gamma$ is finite if and only if $B_\Gamma$ is finite.
Also, because $A^\Gamma$ is trivial, $(Y')^\Gamma$ is finite if and only if $B^\Gamma$ is finite,
hence~(1),~(2) and~(3) are equivalent.
Moreover, in this case note that $(Y')^\Gamma$ is trivial, $\# C^\Gamma = \# C_\Gamma$, and therefore
\begin{eqnarray*}
 \dfrac{\# Y_\Gamma}{\# Y^\Gamma } 
=
 \# (Y')_\Gamma = \# A_\Gamma \cdot \# B_\Gamma = |\pi^{\mu_Y} g_Y(u^{-1} - 1)|_p^{-[\mathrm{Frac}(\O):\Q_p]}
\,.
\end{eqnarray*}
\end{proof}

\section{Multiplicative Euler characteristics and $ p $-adic $ L $-functions} \label{EC}

In this section we prove Theorem~\ref{introtheorem}, formulate Conjecture~\ref{conjecture} (which would
imply Conjecture~\ref{introconjecture}) and briefly discuss the case of
a $1$-dimensional even Artin character in Example~\ref{Hexample}.  Note that Theorem~\ref{introtheorem}(3)
implies Proposition~\ref{contprop} applies for almost all $e$.

We use notation as in Notation~\ref{notation}.  Also, for any Artin
character $\eta$ of $ G_k $ we let $k_\eta$ denote the fixed field of the kernel of the
corresponding representation.

As seen in Remark~\ref{coeffremark}(2) and~(3),
the size of $H^i(\O_{k,S},\qlatmtt{E}{\chi}{1-e})$
for $ i=0 $ can depend on the choice of the lattice $ \lat{E}{\chi} $.
We shall see that a (modified) multiplicative
Euler characteristic can be defined independent of this choice when those cohomology
groups are finite for $ i=0,1 $ and~2.
By Proposition~\ref{corankprop}, this can only happen when
$ k $ is totally real and the character even, so it will be denoted by $\chi$.
For $ i=2 $ the finiteness of the cohomology group then implies
it is trivial, so that the sizes for $ i=0 $ and $ i=1 $ depend on
the choice of the lattice in the same way.

\begin{definition} \label{euler_char}
Let $ E $ be a finite extension of $ \Q_p $, $k$ a totally real number field,
$ \chi : G_k \to E $ an even Artin character realizable over~$ E $, and
$ e $ in $ \B_k(E) $.
If $H^i(\O_{k,S},\qlatmtt{E}{\chi}{1-e})$ is finite for $i = 0,1$ and $2$
then it is trivial for $  i=2 $ by Proposition~\ref{divprop}, and we define
\begin{alignat*}{1}
\EC_S(e,\chi,k) 
& = 
 \left( \prod_{i = 0}^2 \left( \# H^i(\O_{k,S},\qlatmtt{E}{\chi}{1-e}) \right)^{(-1)^i} \right)^{1/[E:\Q_p]}
\\
&  =
 \left( \frac{\# H^0(G_S, \qlatmtt{E}{\chi}{1-e})}{\# H^1(G_S, \qlatmtt{E}{\chi}{1-e})}\right)^{1/[E:\Q_p]}
\,.
\end{alignat*}
\end{definition}

\begin{remark}
By Remark~\ref{coh_dim}, if $ p \ne 2 $ then $ \EC_S(e,\chi,k) $ is a (modified) Euler characteristic,
but if $ p=2 $ then it is truncated.
For $ p=2 $, because every complex conjugation acts trivially,
$ H^i(k_v,\qlatmtt{E}{\chi}{1-e}) $
for $ i \ge 3 $ is trivial
if $ i $ is even since $ \qlatmtt{E}{\chi}{1-e}$ is 2-divisible, 
and isomorphic with $ \qlat{E}{\chi} [2] $ for $ i $ odd.
\end{remark}

\begin{remark} \label{independence}
(1) Definition~\ref{euler_char} for a fixed $E$ is independent of the choice of the lattice $M = \lat{E}{\chi}$.
Namely,
if $M'$ is another lattice for the same representation, then we
obtain an exact sequence~\eqref{sos}.
The $H^i(\O_{k,S},(M' \otimes_{\Z_p} \Q_p/\Z_p)\mtt{1-e})$ are finite for $i=0,1$ and $2$
by Remark~\ref{Mremark}(1), and Lemma~\ref{eul_ind} applied with $A = X_E\mtt{1-e}$ implies that
$\EC_S(e,\chi,k)$ is the same for $\qlat{E}{\chi} = M \otimes_{\Z_p} \Q_p/\Z_p$ or  $M' \otimes_{\Z_p} \Q_p/\Z_p$.

(2) Definition~\ref{euler_char} is independent of the field $E$ by Remark~\ref{Mremark}(2).

(3)
If $ \chi_j $ for $ j=1,2 $ are even Artin characters,
then by Remark~\ref{Mremark}(3), the $ \EC_S(e,\chi_j,k) $ are defined if and only if 
$ \EC_S(e,\chi_1+\chi_2,k) $ is defined, in which case 
$ \EC_S(e,\chi_1+\chi_2,k) = \EC_S(e,\chi_1,k) \cdot \EC_S(e,\chi_2,k) $.

(4)
Let $k'/k$ be a finite extension of totally real fields, $\chi':G_{k'} \to E$ be an even
Artin character realizable over $E$, and $\chi = \Ind_{k'}^k \chi'$.  If $\chi$ is unramified outside of $S$
then by part~(1) and Remark~\ref{Mremark}(4),
$ \EC_S(e,\chi',k') $ is defined if and only if $ \EC_S(e, \chi, k) $ is defined, in which case the two are equal.

(5)
Let $S'$ be a finite set of primes of $k$ with $S \subseteq S'$.
Note that from the definition of $\dEul{v}$ in Section~\ref{pL-functions}, we have that
\begin{equation*}
\dEul{v}(e,\chi \op^{-1},k) = \det(1 - \mtt{\Nm(v)}^{-e} \Fr_w | \vlat{E}{\chi \op^{-1}})
\end{equation*}
for any $v$ not
in $S$ and $w$ above $v$.  If this is non-zero then by Remark~\ref{Mremark}(5)
we have
\begin{equation*}
   \# H^0(\Gal(\Fbar_v/\F_v),\qlatmtt{E}{\chi \op^{-1}}{-e})
 = |\dEul{v}(e,\chi \op^{-1},k)|_p^{-[E:\Q_p]}
\,.
\end{equation*}
So if $\dEul{v}(e,\chi \op^{-1},k) \ne 0$ for $v $ in $ S' \setminus S$ then from Remark~\ref{Mremark}(5) we see
that $\EC_S(e,\chi,k)$ is defined if and only if $\EC_{S'}(e,\chi,k)$ is defined, in which case
\begin{alignat*}{1}
\EC_{S'}(e,\chi,k) = \EC_S(e,\chi,k) \cdot \prod_{v \in S' \setminus S}
|\dEul{v}(e,\chi \op^{-1},k)|_p
\,.
\end{alignat*}

\end{remark}

We now prove the finiteness statement in Theorem~\ref{introtheorem}.

\begin{lemma} \label{finitelemma}
Let $k$ be a totally real number field, $p$ a prime number, $E$ a finite extension of $\Q_p$,
and $ \chi : G_k \to E $ an even Artin character realizable over~$ E $.
Assume that $ S $ is a finite set of finite primes of $ k $ containing the primes above $ p $
as well as the finite primes at which $ \chi $ is ramified.
Then $H^i(\O_{k,S},\qlatmtt{E}{\chi}{1-e})$ is finite for all but finitely many $e$ in $\B_k(E)$.
\end{lemma}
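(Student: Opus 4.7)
The plan is to reduce to the one-dimensional case via Brauer induction and then extract finiteness from the structure theorem for finitely generated torsion $\Oe[[T]]$-modules, as encapsulated in Lemma~\ref{isogeny_lemma}.

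First, I would argue as in the discussion following~\eqref{parity} in Section~\ref{pL-functions}: using Brauer's theorem, one writes $\chi + \chi' = \chi''$ where, after possibly enlarging $E$ to a finite extension $E'$, both $\chi'$ and $\chi''$ are non-negative integer combinations of characters of the form $\Ind_{k_i}^k \eta_i$ with $k_i \subseteq \kbar$ totally real and $\eta_i : G_{k_i} \to E'$ one-dimensional and even. By Remark~\ref{Mremark}(2),(3),(4), the finiteness of $H^i(\O_{k,S},\qlatmtt{E}{\chi}{1-e})$ for all $i$ follows from the finiteness of the corresponding groups $H^i(\O_{k_i,S_i},\qlatmtt{E'}{\eta_i}{1-e})$, where $S_i$ is the set of primes of $k_i$ lying above those in $S$. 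This reduces the lemma to the case where $\chi$ is one-dimensional, $k$ is totally real, and $\chi$ is even.

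In this case, by Theorem~\ref{cogentheorem}(2) we have $r_0 - r_1 + r_2 = 0$ with $r_i = \cork{i,S}{1-e}{\chi}$, and by Theorem~\ref{cogentheorem}(3) one has $r_0 = 0$ for $e \neq 1$; since the statement allows finitely many exceptions we may ignore $e = 1$, so it remains to show $r_2 = 0$ for all but finitely many $e$ in $\B_k(E)$. Let $K$ be the fixed field of $\chi$ and let $\M = \Gal(L_\infty/K_\infty)$ be as in Section~\ref{iwasawa-theory}; then $X = \M \otimes_{\Z_p[G]} \Oe$ is a finitely generated torsion $\Oe[[T]]$-module on which a lift of $\gamma_0$ acts as $1 + T$. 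Using the Hochschild--Serre spectral sequence for $\Gal(\Omega_S/K_\infty) \subseteq G_S$, one identifies the Pontryagin dual of $H^2(\O_{k,S},\qlatmtt{E}{\chi}{1-e})$, up to a module of bounded $\Oe$-rank, with $X_{\Gamma_0}$ computed with respect to the twisted action in which $\gamma_0$ acts as $q_k^{1-e}\cdot(1+T)$. Applying Lemma~\ref{isogeny_lemma} with $u = q_k^{1-e}$, this coinvariant module is finite precisely when $g_X(q_k^{e-1} - 1) \neq 0$, a condition that excludes only finitely many $e$ in $\B_k(E)$.

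The main obstacle is the Hochschild--Serre bookkeeping: one must account for ramification at primes in $S$ not lying above $p$ (these introduce local terms controlled by $\dEul{v}(e,\chi\op^{-1},k)$, each vanishing for at most finitely many $e$ by Remark~\ref{deulzero}), and for the possibility that $K \cap k_\infty \neq k$. The latter is dealt with by first replacing $k$ with the finite totally real cyclotomic extension $k' = K \cap k_\infty$ and then descending via Remark~\ref{Mremark}(4), since induction is compatible with the cohomology groups under consideration. Care with $p = 2$ requires no separate argument here because finiteness of $H^3$ and higher is already controlled by Remark~\ref{coh_dim} and does not enter the analysis of $H^0$, $H^1$, $H^2$.
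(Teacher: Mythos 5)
Your reduction to the $1$-dimensional even case (Brauer induction after enlarging $E$, plus parts (1),(3),(4) of Remark~\ref{Mremark} and $\B_k(E)\subseteq\B_{k_i}(E)$) is sound, and is in the spirit of the paper's proof of Theorem~\ref{ebn2}; the paper's own proof of this lemma is in fact simpler, reducing to the \emph{trivial} character over $k_\chi$ (since $\chi$ occurs in $\Ind_{k_\chi}^k$ of a multiple of the trivial character) and to $S=P$, so that no group $G$ and no condition $K\cap k_\infty=k$ ever enter. The genuine gap is in your key step for the $1$-dimensional case: you work with $H^2$ and identify its Pontryagin dual with the twisted coinvariants of $X=\M\otimes_{\Z_p[G]}\Oe$ only ``up to a module of bounded $\Oe$-rank''. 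That is too weak: bounded corank does not give corank zero, so finiteness of the coinvariant module does not yield finiteness of $H^2(\O_{k,S},\qlatmtt{E}{\chi}{1-e})$. If you try to sharpen ``bounded rank'' to ``finite'', you must control the $E_2^{0,2}$-term of Hochschild--Serre, i.e.\ $H^0\bigl(\Gal(K_\infty/k),H^2(\Gal(\Omega_S/K_\infty),\qlatmtt{E}{\chi}{1-e})\bigr)$. The coefficients restrict to $E/\Oe$ with trivial action of $\Gal(\Omega_S/K_\infty)$, so the compact dual of $H^2(\Gal(\Omega_S/K_\infty),E/\Oe)$ is a finitely generated $\Oe[[T]]$-module $Z$ independent of $e$ except through the $\Gamma_0$-action; unless $Z$ is $\Oe[[T]]$-torsion, its twisted coinvariants have positive $\Oe$-rank for (essentially) every $e$ (e.g.\ $Z=\Oe[[T]]$ contributes a copy of $\Oe$ for every $e$). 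Torsionness of $Z$ is precisely the weak Leopoldt statement for $K_\infty$ --- a real additional input which you neither prove nor cite, and which the paper never needs.

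The repair is exactly the paper's route: since $r_0=0$ for $e\ne1$ (Theorem~\ref{cogentheorem}(3)) and $r_0-r_1+r_2=0$ (Proposition~\ref{corankprop}), it suffices to show $r_1=0$, and $H^1$ is governed by the five-term exact sequence alone: its only constituents are $H^1(\Gal(K_\infty/k),\qlatmtt{E}{\chi}{1-e})$, which is finite for all but finitely many $e$, and $\Hom(\M,\qlatmtt{E}{\chi}{1-e})^{\Gal(K_\infty/k)}$, whose dual is the twisted coinvariant module to which Lemma~\ref{isogeny_lemma} applies; no degree-two group over $K_\infty$, hence no weak Leopoldt, is involved (cf.\ the remark in the introduction that only low-degree groups need to be analysed). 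Two further points you should tighten if you keep your route: reduce to $S=P$ first (last statement of Proposition~\ref{corankprop}), since the module $\M$ of Section~\ref{iwasawa-theory} is only the $p$-ramified one; and when $p$ divides $[k_\chi:k']$, taking $G$-invariants is not exact, so your identification picks up correction terms $H^j(G,\cdot)$ --- these are finite by Lemma~\ref{Gcofin}, but this must be said, or avoided altogether by reducing to the trivial character as the paper does.
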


\begin{proof}
The result for $i = 0$ and $i \ge 3$ follows from Theorem~\ref{cogentheorem}(3) and Remark~\ref{coh_dim}.
Hence by Proposition~\ref{corankprop}, it is enough to consider $i=1$.

Since $\chi$ occurs in $\Ind_{k_\chi}^k (\chi|_{k_\chi})$ and $\B_k(E) \subseteq \B_{k_\chi}(E)$,
parts~(1),~(3) and~(4) of Remark~\ref{Mremark} imply that it is enough to prove the result for the trivial character.
We therefore assume that $\chi$ is trivial and hence $\qlat{E}{\chi} = E/\Oe$.  By the last statement in
Proposition~\ref{corankprop}, we may also assume that $S = P$.

From the spectral sequence
\begin{equation*}
H^j(\Gamma_0,H^{j'}(\Gal(\Omega_P/k_\infty),E/\Oe \mtt{1-e})) \Rightarrow H^{j+j'}(\O_{k,P},E/\Oe \mtt{1-e})
\end{equation*}
it is enough to show that $H^j(\Gamma_0,H^{1-j}(\Gal(\Omega_P/k_\infty),E/\Oe \mtt{1-e}))$ is finite for $j=0,1$.
Since $\Gal(\Omega_P/k_\infty)$ acts trivially on $E/\Oe \mtt{1-e}$, for $j=1$ this group equals the cokernel
of multiplication by $q_k^{1-e} - 1$ on $E/\Oe$, which is finite for $e \ne 1$.  For $j=0$, this group is isomorphic
to $\Hom(\M, E/\Oe \mtt{1-e})^{\Gamma_0} \simeq \Hom((\M \otimes_{\Z_p} \Oe \mtt{e-1})_{\Gamma_0}, \Q_p/\Z_p)$,
where $\M$ is the Galois group of the maximal Abelian extension of $k_\infty$ that is unramified outside of the primes
above $p$ and $\infty$, on which $\Gamma_0$ acts by conjugation, and $\Hom$ is the functor of continuous homomorphisms.
Because $\M \otimes_{\Z_p} \Oe$ is a torsion $\Oe[[T]]$-module
as mentioned at the beginning of Section~\ref{iwasawa-theory}, the result follows
from Lemma~\ref{isogeny_lemma} with $Y = \M \otimes_{\Z_p} \Oe$, $\O = \Oe$, $ \Gamma = \Gamma_0 $ with topological
generator corresponding to $ \gamma_0 $, and $u = q_k^{e-1}$.
\end{proof}

In order to prove the rest of Theorem~\ref{introtheorem} we first prove two weaker results:
Theorem~\ref{ebn} if $ \chi $ is 1-dimensional, and Theorem~\ref{ebn2}
for general $ \chi $.

\begin{theorem} \label{ebn}
Let $ k $ be a totally real number field, $ p $ a prime number, $ E $ a finite extension of $ \Q_p $
with valuation ring $ \Oe $,
$ \chi : G_k \to E $ a 1-dimensional even Artin character, and let be $ e $ in $ \B_k(E) $.
Assume that $ S $ contains $ P $ as well as all the finite primes of $ k $ at which $ \chi $ is ramified.
Let $k \subseteq k' \subseteq k_{\chi}$ be such that $[k':k] = p^l$ for some $l \geq 0$ and 
$ p $ does not divide $ [k_{\chi}:k'] $.  Assume that $ e \ne 1 $ if $\chi_{|G_{k'}} $ is the trivial
character, so that $L_{p,S}(s, \chi_{|G_{k'}} , k')$ is defined at $s = e$.
If its value there is non-zero, then with Assumption~\ref{2-mainc} if $p=2$, we have
\begin{enumerate}
\item
$H^i(\O_{k,S},\qlatmtt{E}{\chi}{1-e})$
is finite for $i = 0, 1$ and trivial for $i = 2$;

\item
$ |L_{p,S}(e,\chi ,k)|_p = \EC_S(e,\chi,k) $.
\end{enumerate}
\end{theorem}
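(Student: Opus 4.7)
\emph{Plan.} My strategy is to split into two cases: the base case $l = 0$ (so $\chi$ has order prime to $p$), for which the main conjecture of Iwasawa theory applies directly; and the general case $l \geq 1$, which I will reduce to the base case over $k'$ using a combination of Shapiro's lemma and an equivariant refinement that isolates the contribution of $\chi$.

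For the base case, set $K = k_\chi$ and $G = \Gal(K/k)$. Since $|G|$ is prime to $p$, we have $K \cap k_\infty = k$ and $\Gal(K_\infty/k) \simeq G \times \Gamma_0$. Two successive Hochschild-Serre spectral sequences --- the first for $\Gal(\Omega_S/K) \triangleleft G_S$, which degenerates to $\chi$-isotypic projection since $|G|$ is prime to $p$ and the coefficients are $p$-primary; the second for $\Gal(\Omega_S/K_\infty) \triangleleft \Gal(\Omega_S/K)$, which is concentrated in degrees $0$ and $1$ because $\Gal(K_\infty/K) \simeq \Z_p$ has cohomological dimension one --- express $H^n(G_S, \qlatmtt{E}{\chi}{1-e})$ in terms of the $\chi$-isotypic component of the Iwasawa module $\M_S := \Gal(L_{\infty,S}/K_\infty)$, where $L_{\infty,S}$ is the maximal abelian pro-$p$ extension of $K_\infty$ unramified outside primes above $S$. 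Applying Lemma~\ref{isogeny_lemma} to $Y = \M_S \otimes_{\Z_p[G]} \Oe$ (with $G$ acting on $\Oe$ through $\chi$) and $u = q_k^{1-e}$ converts $\#H^1/\#H^0$ (up to a factor $|h_\chi(q_k^{1-e}-1)|_p^{-1}$ arising in the trivial-$\chi$ case) into $|\pi^{\mu_\chi^S}\, g_\chi^S(q_k^{1-e}-1)|_p^{-[E:\Q_p]}$, where $\mu_\chi^S,\, g_\chi^S$ are the invariants of the $S$-modified Iwasawa module. Comparing $\M_S$ with $\M$ introduces the local Euler factors $|\dEul{v}(e, \chi\op^{-1}, k)|_p$ at primes $v \in S \setminus P$ via~\eqref{h0eul}. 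Theorem~\ref{mainc} (or Assumption~\ref{2-mainc} for $p = 2$) identifies $g_\chi = \tg_\chi$ and $\mu_\chi = m_\chi$, so combining with~\eqref{pLpowerseries} and~\eqref{pLS} yields $\EC_S(e,\chi,k) = |L_{p,S}(e,\chi,k)|_p$. Finiteness of $H^0$ and $H^1$ follows from $L_{p,S}(e,\chi,k) \neq 0$ via Lemma~\ref{isogeny_lemma}, and triviality of $H^2$ from Proposition~\ref{divprop}.

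For the general case $l \geq 1$, the restriction $\chi|_{G_{k'}}$ has order prime to $p$, so the base case applies over $k'$, giving finiteness/triviality and the Euler characteristic identity for $\chi|_{G_{k'}}$ over $k'$. By Shapiro (Remark~\ref{Mremark}(4)) and the decomposition $\Ind_{k'}^k \chi|_{G_{k'}} \simeq \bigoplus_{\psi \in \widehat{\Gal(k'/k)}} \chi_{p'}\tilde\psi$ (after possibly enlarging $E$ to contain a primitive $p^l$-th root of unity via Remark~\ref{Mremark}(2),(3)), where $\chi_{p'}$ denotes the prime-to-$p$ part of $\chi$,
\begin{equation*}
H^i(\O_{k',S}, \qlatmtt{E}{\chi|_{G_{k'}}}{1-e}) \simeq \bigoplus_\psi H^i(\O_{k,S}, \qlatmtt{E}{\chi_{p'}\tilde\psi}{1-e}),
\end{equation*}
so each summand inherits the finiteness and triviality properties, giving~(1) for $\chi$ (as the summand with $\tilde\psi = \chi/\chi_{p'}$). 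For~(2) I will exploit the $\Gal(k'/k)$-equivariance of the Shapiro isomorphism: the outer action of $\Gal(k'/k)$ on the left-hand cohomology corresponds to the permutation of summands on the right, so the $\psi$-isotypic component on the left is the $\chi_{p'}\tilde\psi$-summand on the right. Combined with the $L$-function factorization $L_{p,S}(s, \chi|_{G_{k'}}, k') = \prod_\psi L_{p,S}(s, \chi_{p'}\tilde\psi, k)$ from Remark~\ref{deuler-induction} and Remark~\ref{independence}(3),(4), applying the idempotent $e_\psi$ in $\Oe[\Gal(k'/k)]$ to both sides isolates the $\psi = \chi/\chi_{p'}$ component and yields $\EC_S(e, \chi, k) = |L_{p,S}(e, \chi, k)|_p$.

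The main obstacle is the precise bookkeeping in the base case: matching up the $\chi$-isotypic projection (from the $G$-action), the modified Tate twist $\mtt{1-e}$ (from the $\Gamma_0$-action), the factor $h_\chi$ (nontrivial only when $\chi$ is trivial, accounting for the extra $H^0$-contribution $\Oe/(q_k^{1-e}-1)$), and the local corrections from $S \setminus P$, all threaded through two Hochschild-Serre spectral sequences and Lemma~\ref{isogeny_lemma}. For $p = 2$ higher cohomology no longer vanishes but the archimedean description of Remark~\ref{coh_dim} makes these contributions explicit. In the reduction step, the key subtlety is the rigorous identification of the $\psi$-isotypic component of the cohomology over $k'$ with the single character $\chi_{p'}\tilde\psi$-summand over $k$, requiring careful tracking of how the Shapiro isomorphism intertwines the $\Gal(k'/k)$-action with the direct sum decomposition; without this the argument would only give the weaker equality of products over $\psi$.
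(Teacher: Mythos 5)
Your base case ($l=0$) is essentially the paper's argument: reduce to the $\chi$-part of an Iwasawa module via Hochschild--Serre, apply Lemma~\ref{isogeny_lemma}, and invoke Theorem~\ref{mainc} (or Assumption~\ref{2-mainc}). The paper first shrinks $S$ to the minimal set using Remark~\ref{independence}(5) and then kills the contribution of the ramified primes by taking $G$-invariants of a Gysin sequence over $K$, so that the unmodified module $\M$ (ramification only above $p$ and $\infty$), to which Theorem~\ref{mainc} literally applies, suffices; your variant with an $S$-ramified module $\M_S$ needs an extra comparison of characteristic ideals that is not among the cited tools, and note also that the lemma must be applied with $u=q_k^{e-1}$ (not $q_k^{1-e}$) and that one needs the absence of finite submodules of $\M$ to pass from the ratio $\#Y_\Gamma/\#Y^\Gamma$ to $\#Y_\Gamma$ itself.

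The genuine gap is in your reduction for $l\ge 1$. Since $[k':k]=p^l$, the idempotent $e_\psi=p^{-l}\sum_\delta \psi(\delta)^{-1}\delta$ does not lie in $\Oe[\Gal(k'/k)]$, and the cohomology groups in question are $p$-torsion $\Oe$-modules; they admit no isotypic decomposition under such idempotents, and the ``$\psi$-isotypic component'' of a finite module for a $p$-group is not multiplicative in the way your argument requires. Likewise, the identification of $H^i(\O_{k',S},\cdot)$ with $\bigoplus_\varphi H^i(\O_{k,S},\sqlatmtt{\varphi}{1-e})$ uses a \emph{choice} of lattice ($N_2=\oplus_\varphi \slat{\varphi}$ versus $N_1=\Ind_{k'}^k\slat{\chi'}$); only the Euler characteristics, not the individual group orders, are lattice-independent (Remark~\ref{independence}(1), and compare Remark~\ref{coeffremark}(2)--(3)), so ``matching summands'' through an equivariant Shapiro isomorphism cannot isolate the $\chi$-term. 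As you yourself note, what one honestly obtains is only the product identity $\prod_{\varphi\in I}\EC_S(e,\varphi,k)=\prod_{\varphi\in I}|L_{p,S}(e,\varphi,k)|_p$. The paper extracts the single factor for $\chi$ by two further ideas that are missing from your plan: (i) induction on $l$, which removes all $\varphi\in I$ of order not divisible by $p^l$ (these factor through a smaller $p$-extension, so the theorem already holds for them); and (ii) the observation that the remaining characters $I_0$ form a single $\Gal(E/\Q_p)$-orbit of $\chi$, on which both $|L_{p,S}(e,\cdot,k)|_p$ (by the Galois-equivariance of $p$-adic $L$-functions) and $\EC_S(e,\cdot,k)$ (by transport of structure on the coefficient sheaves) are constant, so one may take roots of the product. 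Without these two steps your argument stops at the product equality and does not prove part (2) for $\chi$ itself.
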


\begin{proof}
We begin by fixing some notation.
We write $K $ for $ k_{\chi}$, $G$ for $ \Gal(K/k)$, and also view $\chi$ as a character of $G$.
Also, for any 1-dimensional (even) Artin character $ \chi $ with values in $ E $ we choose
$ \lat{E}{\chi}$ to be $ \Oe $ with action given through multiplication by the character, 
for which we write $ \slat{\chi} $, and write $\sqlat{\chi} = \qlat{E}{\chi} = \slat{\chi} \otimes_{\Z_p} \Q_p/\Z_p$.

We first prove the result for $ l=0 $, so that $ k' = k $, and
we assume $L_{p,S}(e,\chi ,k) \ne 0$.
By~\eqref{pLS} and Remark~\ref{independence}(5), it suffices to prove the theorem when
$S$ is the union of the set of primes of $k$ where $\chi$ is ramified and $P$.
Since $\chi\op^{-1}$ is 1-dimensional
and ramified at all $ v $ in $ S \setminus P $ we then have
$L_{p,S}(e,\chi,k) = L_p(e,\chi,k)$.

Note that $\Gal(K_{\infty}/k) \simeq G \times \Gamma_0$ because $ p $ does not divide $ [K:k] $.
We shall consider $\gamma_0$ as a topological generator of $\Gal(K_{\infty}/K)$ as well 
via the isomorphism $\Gal(K_{\infty}/K) \simeq \Gal(k_{\infty}/k) = \Gamma_0$.

We have
\begin{equation} \label{H0}
\begin{aligned}
   H^0(\O_{k,S},\sqlatmtt{\chi}{1-e})
& \simeq
   H^0(\Gal(K_{\infty}/k), \sqlatmtt{\chi}{1-e})
\\
& = 
  \left( \sqlatmtt{\chi}{1-e}^G \right)^{\Gamma_0}
\,.
\end{aligned}
\end{equation}
If $ \chi $ is non-trivial then
$ \chi(g)-1 $ is in $ \Oe^\times $ for any non-trivial $ g $ in $ G $ 
as $ l=0 $, so $ \sqlatmtt{\chi}{1-e}^G $ and~\eqref{H0}
are trivial.
If $ \chi $ is trivial then~\eqref{H0} equals $ ( \sqlatmtt{\chi}{1-e})^{\Gamma_0} $, which
is isomorphic to the kernel of multiplication by
$ q_k^{1-e}-1 $ on $ E/\Oe $.
In this case $e \ne 1$ by assumption, so \eqref{H0} is isomorphic to $\Oe/(q_k^{1-e}-1)$.
In either case, we see from the definition of $ h_\chi $ in Section~\ref{pL-functions} that
\begin{equation} \label{h_0}
\# H^0(\O_{k,S},\sqlatmtt{\chi}{1-e})^{1/[E:\Q_p]} = |h_\chi(q_k^{1-e}-1)|_p^{-1}
\,.
\end{equation}

We now consider $H^1(\O_{k,S},\sqlatmtt{\chi}{1-e})$. 
Note that $H^i(G,T) = 0$ for any $\Z_p$-module $T$ if $i \geq 1$, because $ \# G  $ annihilates
this group and $ \# G$ is in $ \Z_p^\times $.  
Hence
taking $G$-invariants is exact
and $H^i(\O_{k,S},T) \simeq H^0(G,H^i(\O_{K,S},T))$.

With $Z = \Spec \O_{K,P} \setminus \Spec \O_{K,S}$ we have, for all $ j $, that
\begin{equation*}
 H^j(Z,\sqlatmtt{\chi \op^{-1}}{-e})
\simeq
 \prod_{w \in Z} H^j(\Gal(\Fbar_w/\F_w),\sqlatmtt{\chi \op^{-1}}{-e})
\,.
\end{equation*}
Let $G_w$ and $I_w$ denote the decomposition and inertia subgroups for $ w $ inside $G$.
Note that $I_w$ is non-trivial for each $w$ in $ Z$ since $\chi$ is ramified at $w$.  Hence there exists
$ g $ in $ I_w $ with $ \chi(g)^{-1} - 1 \ne 0 $, and this lies in $ \Oe^\times $ as $ l=0 $.
The $ I_w $-invariants, and hence $ G $-invariants of
$ H^j(\Gal(\Fbar_w/\F_w),\sqlatmtt{\chi \op^{-1}}{-e}) $, are therefore trivial
because the action of $I_w$ on this group is as on $\sqlat{\chi}$.
From the Gysin sequence as in Remark~\ref{Mremark}(5), but with $k$, $ S $ and $S'$ replaced with
$ K $, $ P$ and $S$, and with the current $ Z $, we find by taking $ G $-invariants that
\begin{alignat*}{1}
H^i(\O_{k,S},\sqlatmtt{\chi}{1-e}))
& \simeq 
H^i(\O_{K,S},\sqlatmtt{\chi}{1-e})^G
\\
& \simeq
H^i(\O_{K,P},\sqlatmtt{\chi}{1-e})^G
\end{alignat*}
for all $i \geq 0$.

For the remainder of the case $l=0$ we can now follow the setup for the proof of \cite[Theorem~6.1]{Ba-Ne}.
Let $\OKP$ denote the maximal extension of $K$ that is unramified outside of the primes above
$p$ or $ \infty $, so that by Remark~\ref{duality-remark} we have
\begin{alignat*}{1}
H^i(\O_{K,P},\sqlatmtt{\chi}{1-e})^G & \simeq H^i(\Gal(\OKP/K),\sqlatmtt{\chi}{1-e})^G
\\
& \simeq H^i(\Gal(\OKP/k),\sqlatmtt{\chi}{1-e})
\,.
\end{alignat*}
We obtain from the Hochschild-Serre spectral sequence for the normal subgroup $ \Gal(\OKP/K_\infty)$
of $ \Gal(\OKP/k)$ with quotient $ \Gal(K_\infty/k) \simeq \Gamma_0 \times G$
a 5-term exact sequence.  
Using that $ H^0(\Gal(\OKP/K_\infty), \sqlatmtt{\chi}{1-e}) =  \sqlatmtt{\chi}{1-e}$,
we find there is an exact sequence
\begin{equation} \label{fiveterm}
\begin{aligned}
 0
& \to
 H^1(\Gal(K_\infty/k),\sqlatmtt{\chi}{1-e})
\\
& \to
 H^1(\Gal(\OKP/k),\sqlatmtt{\chi}{1-e})
\\
& \to
 H^0(\Gal(K_\infty/k),H^1(\Gal(\OKP/K_{\infty}),\sqlatmtt{\chi}{1-e}))
\\
& \to
 H^2(\Gal(K_\infty/k),\sqlatmtt{\chi}{1-e})
\,.
\end{aligned}
\end{equation}
Note that $H^i(\Gal(K_\infty/k),\cdot) \simeq H^i(\Gal(K_\infty/K),\cdot)^G = 0$ for $i \geq 2$ as
the cohomological dimension of $\Gal(K_\infty/K)$ is 1, so the last term in~\eqref{fiveterm} is trivial.
As $H^1(\Gal(K_\infty/K),\sqlatmtt{\chi}{1-e}) $ is the cokernel of multiplication
by $ q_k^{1-e} - 1 $ on $ \sqlat{\chi}$, the first term in~\eqref{fiveterm} is trivial when $e \ne 1$
and equals $\sqlat{\chi}^G$ when $e = 1$.  In the last case $\chi$ is non-trivial by assumption, so
$ \chi(g)-1 $ is in $ \Oe^\times $ for some $ g $ in $ G $ as $ l=0 $.
Hence
$ H^1(\O_{k,S},\sqlatmtt{\chi}{1-e})) $ 
is isomorphic with $ H^0(\Gal(K_\infty/k),H^1(\Gal(\OKP/K_{\infty}),\sqlatmtt{\chi}{1-e}))$.
The action of $\Gal(\OKP/K_{\infty})$ on $\sqlatmtt{\chi}{1-e}$ is trivial, so
with $\M$ the Galois group of the maximal Abelian pro-$p$-extension $L_\infty$ of $K_{\infty}$ that is unramified
outside of the primes above $p$ and the infinite primes (so $ L_\infty \subseteq \OKP $),
we can rewrite this last group as
\begin{equation}\label{to_good_form_1}
\begin{aligned}
& \Hom(\Gal(\OKP/K_{\infty}),\sqlatmtt{\chi}{1-e})^{\Gal(K_{\infty}/k)}
\\
\simeq \, & 
 \Hom(\M,\sqlatmtt{\chi}{1-e})^{\Gal(K_{\infty}/k)} 
\\
\simeq \, &
 \Hom(\M\mtt{e-1}\otimes_{\Z_p} \slat{\dual{\chi}},\Q_p/\Z_p)^{\Gal(K_{\infty}/k)}
\\
\simeq \, &
 \Hom \left( \left( \M\mtt{e-1}\otimes_{\Z_p} \slat{\dual{\chi}} \right)_{\Gal(K_{\infty}/k)}, \Q_p/\Z_p \right)
\,,
\end{aligned}
\end{equation}
where $\Hom$ is again the functor of continuous homomorphisms.
Note that the action of $\Gal(K_{\infty}/k)$ on
$\M\mtt{e-1}\otimes_{\Z_p} \slat{\dual{\chi}}$ is diagonal. 
We therefore have
\begin{equation}\label{to_good_form_2}
\begin{aligned}
& \, \left( \M\mtt{e-1}\otimes_{\Z_p} \slat{\dual{\chi}} \right)_{\Gal(K_{\infty}/k)}
\\
\simeq \, &  \left( \left( \M\mtt{e-1}\otimes_{\Z_p} \slat{\dual{\chi}} \right)_{G} \right)_{\Gal(K_\infty/K)} 
\\
\simeq & \, \left( \left( \M\mtt{e-1}\otimes_{\Z_p[G]} \slat{\chi} \right) \right)_{\Gal(K_\infty/K)}
\end{aligned}
\end{equation}
because for a right (resp.\ left) $\Z_p[G]$-module $A$ (resp.\ $B$), with $G$
acting on $A \otimes_{\Z_p} B$ as $g(a \otimes b) = a\cdot g \otimes g^{-1} \cdot b$, we have a natural
isomorphism $(A \otimes B)_G \simeq A \otimes_{\Z_p[G]} B$ as $\Z_p$-modules.

Recall that, since $k$ is totally real and $\chi$ is even, $\M$ is a torsion $\Z_p[G][[T]]$-module with
the action of $T$ given by that of $\gamma_0 - 1$. 
By \cite[Theorem 18]{Iwa73} and \cite[Section 6.4]{schmidt2002}, $ \M $ has no non-trivial finite $ \Z_p[[T]] $-submodule,
hence the same holds for $ \M \otimes_{\Z_p[G]} \Oe[G] \simeq \M^{[E:\Q_p]} $.
Then the last group cannot contain a finite non-trivial $ \Oe[[T]] $-submodule,
and as we may view $ \slat{\chi}$ as a direct summand of $ \Oe[G] $ because $p$ does not divide $\# G$
since $ l=0 $, we find
that $\M \otimes_{\Z_p[G]} \slat{\chi}$ is a torsion $\Oe[[T]]$-module without non-trivial finite submodule.  

Observe that we are now in the situation of Lemma~\ref{isogeny_lemma} with
 $\O = \Oe$,
$Y = \M  \otimes_{\Z_p[G]} \slat{\chi}$, $ \Gamma = \Gal(K_\infty/K) $ with topological generator
corresponding to $ \gamma_0 $, and $u = q_k^{e-1}$.
As defined right after Definition~\ref{structure}, $\mu_\chi$ and $g_\chi$ are
the $ \mu_Y $ and $ g_Y $ in Definition~\ref{structure}
but the action of $ \gamma_0 $ on $ Y $ in Lemma~\ref{isogeny_lemma} for this $u$
is the action on $ \M\mtt{e-1}\otimes_{\Z_p[G]} \slat{\chi} $.

We assume that $L_{p,S}(e,\chi,k) \neq 0$, so by Theorem~\ref{mainc} when $ p \ne2 $
and Assumption~\ref{2-mainc} when $ p=2 $, we get $g_{\chi}(q_k^{1-e} - 1) \neq 0$.
Since $ \mu_\chi = \mu_Y $ and $ g_\chi = g_Y $ in Lemma~\ref{isogeny_lemma},
this lemma states that the group in~\eqref{to_good_form_2} is finite and has cardinality
$ \left| \pi^{\mu_{\chi}} \cdot g_{\chi} (q_k^{1-e} - 1) \right|_p^{-[E:\Q_p]} $, with $ \pi $ a uniformizer
of $ \Oe $.  The same holds for $H^1(\O_{k,S},\sqlatmtt{\chi}{1-e}) $ by~\eqref{to_good_form_1}.
Then $H^2(\O_{k,S},\sqlatmtt{\chi}{1-e})$ is also finite by Proposition~\ref{corankprop},
hence trivial by Proposition~\ref{divprop}.
By Theorem~\ref{mainc} or Assumption~\ref{2-mainc}, and~\eqref{h_0}, we then find, since
$L_{p,S}(e,\chi ,k) = L_p(e,\chi ,k)$,
\begin{alignat*}{1}
     &
   |L_{p,S}(e,\chi ,k)|_p = \frac{| \pi^{\mu_{\chi}} \cdot g_{\chi} (q_k^{1-e} - 1)|_p} {|h_{\chi} (q_k^{1-e} - 1)|_p} 
\\
= \, &
  \left( \frac{\# H^0(\O_{k,S},\sqlatmtt{\chi}{1-e})}{\#  H^1(\O_{k,S},\sqlatmtt{\chi}{1-e})} \right)^{1/[E:\Q_p]}
= 
  \EC_S(e,\chi,k) \,.
\end{alignat*}
This completes the proof of the theorem when $ l=0 $.

We now proceed by induction on $l$, so that we have $[K:k] = m p^l$ with $ (p,m) = 1 $ and $ l \ge 1 $.
Recall that $ k' $ is the subfield of $ K = k_\chi $ with $ [k':k] = p^l $.  Since $\chi$ is even,
$K$ and $k'$ are totally real.
With $\chi' = \chi_{|G_{k'}}$ we have
\begin{eqnarray} \label{induction}
\Ind_{k'}^{k} (\chi') = \sum_{\varphi \in I} \varphi,
\end{eqnarray}
where $I$ is the set of all 1-dimensional characters
$\varphi$ over $ \Qpbar $ of $\Gal(\kbar/k)$ factoring through $\Gal(K/k)$ that restrict to $ \chi' $
on $ G_{k'} $ (hence the quotient of two $ \varphi $'s is a character of $ \Gal(k'/k) \simeq \Z/p^l\Z$).
Note that each $ \varphi $ is a power of $ \chi $, hence is even, takes values in $E$, and $ \Ind_{k'}^{k} (\chi') $
is also unramified outside of $ S $.
Let $N_1 = \Ind_{k'}^{k} (\slat{\chi'}) $ and $N_2 = \oplus_{\varphi \in I} \slat{\varphi} $.
By~\eqref{induction} these two $\Oe[G_k]$-modules of finite $\Oe$-rank correspond to the same representation.
From Remark~\ref{Mremark}(4) we know that
\begin{equation} \label{induction_iso}
H^i(\O_{k,S},N_1 \otimes_{\Z_p} \Q_p/\Z_p\mtt{1-e})
\simeq H^i(\O_{k',S},\sqlatmtt{\chi'}{1-e})
\end{equation}
for each $i \geq 0$.

By our assumptions, $L_{p,S}(e,\chi',k') $ is defined and non-zero, so
by the case $l=0$ the theorem is true for $ \chi' $ and $ k' $.  Therefore
the group in \eqref{induction_iso} is finite for $i = 0,1$, trivial for $i=2 $, and
\begin{equation*}
\EC_S(e,\chi',k') = | L_{p,S}(e,\chi',k') |_p
\,.
\end{equation*}
From Remark~\ref{independence}(1) we conclude that
$H^i(\O_{k,S},N_2 \otimes_{\Z_p} \Q_p/\Z_p\mtt{1-e})$ is finite for $ i = 0,1,2 $, hence trivial for
$ i = 2 $ by Proposition~\ref{divprop}.  The same statement therefore holds for each summand $H^i(\O_{k,S},\sqlatmtt{\varphi}{1-e})$.
From parts~(3) and~(4) of Remark~\ref{independence} we obtain
\begin{equation}\label{ECLidentity}
\begin{aligned}
   & \, \prod_{\varphi \in I} \EC_S(e,\varphi,k)
 = \EC_S(e,\chi',k')
\\
 = & \, | L_{p,S}(e,\chi' ,k') |_p
 =  \prod_{\varphi \in I} | L_{p,S}(e,\varphi,k) |_p
\,,
\end{aligned}
\end{equation}
with the last equality by~\eqref{induction} and the compatibility of $L_p$ and $\dEul{v}$ with induction (see
Remark~\ref{deuler-induction}).
Note that the $ p $-adic $ L $-functions involved here are of Abelian characters so cannot have a pole
except possibly at~$ 1 $, hence by our assumptions on $e$ and $\chi'$,
$ L_{p,S}(e,\varphi,k)$ exists and is non-zero for every~$ \varphi $.

Let $I_0$ denote the subset of $I$ consisting of all $ \varphi $ with order divisible by~$ p^l $.  
If $\varphi \in I \setminus I_0$ then $[k_\varphi:k]$ divides $p^{l-1}m$, hence by the induction hypothesis
the theorem is true for $\varphi$.  Thus from~\eqref{ECLidentity} we obtain
\begin{equation}\label{I0id}
 \prod_{\varphi \in I_0} |L_{p,S}(e,\varphi,k)|_p = \prod_{\varphi \in I_0} \EC_S(e,\varphi,k)
 \,.
\end{equation}
Using Remark~\ref{independence}(2) we may replace $ E $ with its Galois closure over $ \Q_p $ if necessary.
All $ \varphi $ in $ I_0 $ are
in the orbit of $ \chi $ under the action of $ \Gal(E/\Q_p) $ since this group acts transitively on the
roots of unity of order $ p^l $.  But for $\sigma $ in $ \Gal(E/\Q_p)$ we have
$L_{p,S}(e,\chi^{\sigma},k) = L_{p,S}(e,\chi,k)^{\sigma}$ by~\cite[p.413]{BBdJR}, hence
$ |L_{p,S}(e,\chi^{\sigma},k) |_p = | L_{p,S}(e,\chi,k) |_p $.
On the other hand, $\sigma$ induces an
isomorphism $\slat{\chi} \simeq \slat{\chi^{\sigma}}$ of sheaves for
the \'etale topology on $\O_{k,S}$, so that
$\EC_S(e,\chi^{\sigma},k) = \EC_S(e,\chi,k)$.  Taking roots of both sides in~\eqref{I0id} we find
$ |L_{p,S}(e,\chi,k)|_p = \EC_S(e,\chi,k)$.
This finishes the proof of the induction step, and of the theorem.
\end{proof}

We now extend the statements of Theorem~\ref{ebn} to
arbitrary Artin characters for all but finitely many $ e $, which
is the last main ingredient needed for the proof of Theorem~\ref{introtheorem}.

\begin{theorem} \label{ebn2}
Let $ k $ be a totally real number field, $ p $ a prime number, $ E $ a finite extension of $ \Q_p $,
and $ \chi : G_k \to E $ an even Artin character realizable over $ E $.
Assume that $ S $ contains $ P $ as well as all the finite primes of
$ k $ where $ \chi $ is ramified.  The following hold for all but finitely many $ e $
in $ \B_k(E) $ where for part~(2) we make Assumption~\ref{2-mainc} if $ p = 2 $.
\begin{enumerate}
\item
$H^i(\O_{k,S},\qlatmtt{E}{\chi}{1-e})$
is finite for $i = 0, 1$ and trivial for $i = 2$.

\item
$ |L_{p,S}(e,\chi,k)|_p  = \EC_S(e,\chi,k) $.
\end{enumerate}
\end{theorem}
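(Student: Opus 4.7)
The strategy is to reduce to the one-dimensional case established in Theorem~\ref{ebn} via Brauer's induction theorem, and then to propagate the identity using the multiplicativity of both $\EC_S$ and $|L_{p,S}|_p$ under addition and induction of characters.

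Since $\chi$ is even, its fixed field $K = k_\chi$ is a finite totally real extension of~$k$. After enlarging $E$ if necessary (harmless by Remark~\ref{independence}(2)), I would invoke Brauer's theorem exactly as in the paragraph containing~\eqref{parity} to write
\begin{equation*}
\chi = \sum_j a_j \Ind_{k_j}^k \chi_j
\end{equation*}
as virtual characters, with $a_j$ in $\Z$, each $k_j$ totally real and intermediate to $k\subseteq k_j\subseteq K$, and each $\chi_j:G_{k_j}\to E$ a 1-dimensional even Artin character. Split into actual Artin characters $\chi^+ = \sum_{a_j > 0}a_j\Ind_{k_j}^k\chi_j$ and $\chi^- = \sum_{a_j < 0}|a_j|\Ind_{k_j}^k\chi_j$, so that $\chi + \chi^- = \chi^+$. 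With $S_j$ denoting the set of primes of $k_j$ lying above those in $S$, note that $\B_k(E) \subseteq \B_{k_j}(E)$ because $q_{k_j}$ is a $p$-power of $q_k$ inside $1+2p\Z_p$.

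Next, I would exclude from $\B_k(E)$ the (finite) union of the following exceptional values: the points where Lemma~\ref{finitelemma} fails for $\chi$, $\chi^\pm$, or any of the $(\chi_j, k_j)$; the finitely many zeros in $\B_k(E)$ of each of the $p$-adic $L$-functions appearing in the hypotheses of Theorem~\ref{ebn} when applied to each $(\chi_j, k_j)$ (finite because of the Weierstrass-type structure in Remark~\ref{analytic-remark}); and the point $e = 1$ if some $\chi_j$ is trivial. For every remaining $e$, Theorem~\ref{ebn} applied to each $(\chi_j, k_j)$ individually yields both the triviality of $H^2(\O_{k_j,S_j}, \qlatmtt{E}{\chi_j}{1-e})$ and the equality $|L_{p,S_j}(e,\chi_j,k_j)|_p = \EC_{S_j}(e,\chi_j,k_j)$.

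Finally, combining parts~(3) and~(4) of Remark~\ref{independence} with Remark~\ref{deuler-induction} and the multiplicativity of $L_p$ under sums of characters would give $\EC_S(e,\chi^\pm,k) = |L_{p,S}(e,\chi^\pm,k)|_p$, both non-zero. Applying Remark~\ref{independence}(3) to the identity $\chi+\chi^- = \chi^+$ then forces $\EC_S(e,\chi,k)$ to be defined, which via Definition~\ref{euler_char} and Proposition~\ref{divprop} gives $H^2(\O_{k,S}, \qlatmtt{E}{\chi}{1-e}) = 0$; together with Lemma~\ref{finitelemma} this is part~(1). Using the same remark to write $\EC_S(e,\chi,k) = \EC_S(e,\chi^+,k)/\EC_S(e,\chi^-,k)$ and the analogous multiplicativity of $|L_{p,S}|_p$ then yields $|L_{p,S}(e,\chi,k)|_p = \EC_S(e,\chi,k)$, proving part~(2). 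The main point of caution is bookkeeping: each reduction introduces finitely many exceptional $e$'s, but since the Brauer decomposition involves only finitely many $\chi_j$ their union remains finite.
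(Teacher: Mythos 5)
Your proposal is correct and takes essentially the same route as the paper: Brauer induction of $\chi$ into $1$-dimensional even characters of intermediate totally real fields, Theorem~\ref{ebn} applied to each pair, and the multiplicativity of $\EC_S$ and of $|L_{p,S}(\cdot)|_p$ under sums and induction of characters (the paper writes the decomposition as $\chi+\sum_i\Ind_{k_i}^k(\varphi_i)=\sum_j\Ind_{k_j'}^k(\varphi_j')$ rather than splitting into $\chi^{\pm}$, and obtains part~(1) directly from Lemma~\ref{finitelemma} and Proposition~\ref{divprop}, but these differences are cosmetic). One bookkeeping nicety: the point $e=1$ should be excluded whenever some $\chi_j$ has order a power of $p$ (so that the restricted character in the hypothesis of Theorem~\ref{ebn} is trivial), not only when $\chi_j$ itself is trivial; this adds at most one point to your finite exceptional set, so it does not affect the argument.
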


\begin{proof}
Part~(1) follows from Lemma~\ref{finitelemma} and Proposition~\ref{divprop}.  For part~(2),
note that we may enlarge $ E $ if we want because of Remark~\ref{independence}(2).
Using Brauer's Theorem we can then write 
\begin{equation*}
\chi+ \sum_i \Ind_{k_i}^k(\varphi_i) = \sum_j \Ind_{k_j'}^k(\varphi_j')
\end{equation*}
for 1-dimensional $ E $-valued Artin characters $ \varphi_i $ (resp.\ $ \varphi_j' $)
of $G_{k_i}$ (resp.\ $ G_{k_j'} $) with all $ k_i $, $ k_j' $ intermediate fields of the extension
$ k_\chi/k $,
so that $G_{k_{\chi}}$ is in the kernel of all $\varphi_i$ and $\varphi_j'$. 
Since $k_\chi$ is totally real, so are all $k_i$ and $k_j'$, and all
$ \varphi_i $ and $ \varphi_j' $ are even.
Because $ \B_k(E)$ is contained in all $\B_{k_i}(E) $ and $\B_{k_j'}(E)$,
we may apply Theorem~\ref{ebn} to each pair $ (\varphi_i,k_i) $ or $ (\varphi_j',k_j') $
for all but finitely many $ e $ in $ \B_k(E) $.
For all such $ e $, parts~(3) and~(4) of Remark~\ref{independence} give that
\begin{equation*}
 \EC_S(e,\chi,k) \prod_i \EC_S(e,\varphi_i,k_i) = \prod_j \EC_S(e,\varphi_j',k_j') 
\,,
\end{equation*}
whereas Remark~\ref{deuler-induction} implies
\begin{equation*}
L_{p,S}(e,\chi,k) \prod_i L_{p,S}(e,\varphi_i,k_i) = \prod_j L_{p,S}(e,\varphi_j',k_j') 
\,,
\end{equation*}
so that $ \EC_S(e,\chi,k) = | L_{p,S}(e,\chi,k) |_p $.
\end{proof}

We can now give the proof of Theorem~\ref{introtheorem}.
We make extensive use of a counting argument while approximating a
fixed $ e $ with suitable $ e' $.
In particular, this method enables us to treat the finitely many $ e $ excluded
in Theorem~\ref{ebn2}.

\begin{proof}[Proof of Theorem~\ref{introtheorem}]
The first statement is Theorem~\ref{ebn2}(1).  For the remaining statements we 
make Assumption~\ref{2-mainc} if $p=2$.

Let $r_i = r_{i,S}(1-e,\chi) = \corank_{\Oe} H^i(\O_{k,S},\qlatmtt{E}{\chi}{1-e})$.
By Theorem~\ref{cogentheorem}(3), $ \cork{0}{1-e}{\chi} $ is zero if $e \ne 1$
and equals the multiplicity of the trivial character in $ \chi $ otherwise.
Since $ r_2 \ge0 $, the statements about
analyticity and poles of $ L_{p,S}(s,\chi,k) $ in part~(2) follow from
the inequality $ r_2 - r_0 \le \nu $ in part~(1) and Remark~\ref{analytic-remark}.
The claims in part~(2) for $L_p(s,\chi,k)$ then follow from this
in turn by using Remark~\ref{analytic-remark}, \eqref{pLS},
Remark~\ref{deulzero}, and
Proposition~\ref{OPprop} (note that for $ e=0 $ we have $ r_0=0 $
and $ r_1=r_2 $ by parts~(2) and~(3) of Theorem~\ref{cogentheorem}).
It thus suffices to prove parts~(1) and~(3).

For the proof of~(1), let $ r= r_2-r_0 $.
We shall prove the inequalities $ \min(1-r_0,\nu) \le r \le \nu $
in~(1) when $ e=1 $ and $ \chi $ contains the trivial character later.
So let us fix $ e $ in $ \B_\chi(E) $, hence
$r_0 = 0$ by Theorem~\ref{cogentheorem}(3) and therefore $r = r_1 = r_2$ by Proposition~\ref{corankprop}.
Let $X$ be a finite $\Oe$-module of size $c_1$ such that
$ H^1(\O_{k,S}, \qlatmtt{E}{\chi}{1-e})^\vee \simeq \Oe^{r} \times X $ and
hence $  H^1(\O_{k,S}, \qlatmtt{E}{\chi}{1-e}) \simeq (E/\Oe)^{r} \times X^\vee $
as $ \Oe $-modules.
Using Theorem~\ref{cogentheorem}(3) for condition~(b) below, and Theorem~\ref{ebn2} for condition~(c),
we now only consider $ n>0 $ such that
\begin{enumerate}
\item[(a)]
$ c_1 $ divides $ p^n $;

\item[(b)]
$ \# H^0(\O_{k,S}, \qlatmtt{E}{\chi}{1-e'}) $ equals a constant $c_0$ if $ |e'-e|_p \leq p^{-n} $,
and $ c_0 $ divides $ p^n $;

\item[(c)]
if $ 0< |e'-e|_p \le p^{-n} $ then $e'$ is in $\B_\chi(E)$, $H^i(\O_{k,S},\qlatmtt{E}{\chi}{1-e'})$
is finite for $i=0,1,2$, $L_{p,S}(e',\chi,k) \ne 0$, and $ |L_{p,S}(e',\chi,k)|_p  = \EC_S(e',\chi,k)$.
\end{enumerate}
Recall that, for $a \ne 0 $ in $\Oe$,
$ \klat{a}{E}{\chi} =  \ker(\qlat{E}{\chi} \stackrel{a}{\to} \qlat{E}{\chi}) $.
For $ |e'-e|_p \le p^{-n}  $ the long exact sequence associated to \eqref{texact} gives
\begin{equation*}
\begin{aligned}
 \# H^1(\O_{k,S}, \qlatmtt{E}{\chi}{1-e'})[p^n]
=
 \frac{\# H^1(\O_{k,S}, \klatmtt{p^n}{E}{\chi}{1-e'})}{\# H^0(\O_{k,S}, \qlatmtt{E}{\chi}{1-e'})}
\,.
\end{aligned}
\end{equation*}
For $|e' - e|_p \le p^{-n}$ we have $ \klatmtt{p^n}{E}{\chi}{1-e'}\simeq \klatmtt{p^n}{E}{\chi}{1-e}$
as $ G_k $-modules, hence
\begin{equation}\label{eq0}
\begin{aligned}
      & \# H^1(\O_{k,S}, \qlatmtt{E}{\chi}{1-e'}) [p^n]
\\
 = \, & 
    \#  H^1(\O_{k,S}, \qlatmtt{E}{\chi}{1-e}) [p^n] 
\\
= \, &
         c_1 p^{nr[E:\Q_p]}
\,,
\end{aligned}
\end{equation}
so if also $ e'\ne e $ then by~(c) and~(b) we find
\begin{equation}\label{eq1}
\begin{aligned}
& \, 
  | L_{p,S}(e',\chi,k) |_p^{[E:\Q_p]}
=
 \EC_S(e',\chi,k)^{[E:\Q_p]} 
\\
= & \, 
 \frac{\# H^0(\O_{k,S}, \qlatmtt{E}{\chi}{1-e'})}{\# H^1(\O_{k,S}, \qlatmtt{E}{\chi}{1-e'})} 
\le
 c_0 c_1^{-1} p^{-nr[E:\Q_p]}
\,.
\end{aligned}
\end{equation}
For $ n \gg 0 $ the left-hand side equals $ c |e'-e|_p^{\nu [E:\Q_p]} $ for some $ c \ne 0 $ and so by
choosing $e'$ with $|e' -e|_p = p^{-n}$ we get $ \nu \ge r $.

For proving that $ \min(1,\nu) \le r $ we may take
$ r=0 $.  For $ |e'-e|_p \le p^{-n-1} $ we see
from~\eqref{eq0} that the part of $ H^1(\O_{k,S}, \qlatmtt{E}{\chi}{1-e'}) $ annihilated
by $ p^n $ is the same as that annihilated by $ p^{n+1} $.
Hence this is the entire group,
and $ \# H^1(\O_{k,S}, \qlatmtt{E}{\chi}{1-e'}) = c_1 $ for such $ e' $.
From~\eqref{eq1} we then see that $ | L_{p,S}(e',\chi,k) |_p^{[E:\Q_p]}  = c_0/c_1  $
for $ e'\ne e $ close enough to $ e $, so by continuity we have
$ | L_{p,S}(e,\chi,k) |_p^{[E:\Q_p]}  = c_0/c_1  \ne 0 $ and $ \nu=0 $.

We now assume $e=1$ and $\chi$ contains the trivial character $\chi_0$.
Note that the two sides of the inequality $r(e,\chi) \le \nu(e,\chi)$ are additive in $\chi$, therefore
it is enough to prove this for $\chi = \chi_0$.  In this case we may take $E = \Q_p$ and $M = \Z_p$.
Moreover, we may take $S=P$ by Theorem~\ref{cogentheorem}(2) because $ \dEul{v}(1,\chi_0 \op^{-1},k) \ne 0 $.
Clearly $H^0(\O_{k,P},\Q_p/\Z_p) = \Q_p/\Z_p$, hence $\cork{0}{0}{\chi_0} = 1$.
On the other hand, by the discussion after~\eqref{fiveterm} and~\eqref{to_good_form_1} we have
$\cork{1}{0}{\chi_0} = 1 + \rank_{\Z_p} \M$, where $\M$ is the Galois group of the maximal extension of $k_\infty$
unramified outside of the primes above $p$ and infinite primes. 
Since $\M$ is isogenous to
$ \bigoplus_{i=1}^r \Z_p/\pi^{m_i} \oplus \bigoplus_{j=1}^s \Z_p/(g_j(0))$,
we find $\rank_{\Z_p} \M \leq \ord_{T=0} (\tg_{\chi_0}(T))$
by Theorem~\ref{mainc} if $p \ne 2$, and Assumption~\ref{2-mainc} if $p = 2$.
As $h_{\chi_0}(T) = T$,
$\ord_{T=0} (\tg_{\chi_0}(T)) = \nu(0,\chi_0) + 1$.
Thus we have  $\cork{1}{0}{\chi_0} \le \nu(0,\chi_0) + 2$ and $\cork{2}{0}{\chi_0} \le \nu(0,\chi_0) + 1$
by Theorem~\ref{cogentheorem}(2).  Thus $r(0,\chi_0) \le \nu(0,\chi_0)$.  This proves
the inequality $r \le \nu $ in all the cases.

The inequality $\min(1-r_0,\nu) \le r = r_2 - r_0$ is trivial unless $r_2 = 0$.
Write $\chi = \chi' + s\chi_0$ with $\chi'$ not containing the trivial character and $s \ge 1$, then
$\cork{2}{0}{\chi} = \cork{2}{0}{\chi'} +s\cork{2}{0}{\chi_0}$.  This is zero only if
$ \cork{2}{0}{\chi'} = \cork{2}{0}{\chi_0} = 0$.  We have just proved Theorem~\ref{introtheorem}(1)
for $\chi'$, so using Theorem~\ref{cogentheorem}(3) for $\chi'$ we see $L_p(1,\chi',k) \ne 0$.
Moreover, $\cork{2}{0}{\chi_0} = 0$ implies that $\cork{0}{0}{\chi_0} = \cork{1}{0}{\chi_0} = 1$ by
parts~(2) and~(3) of Theorem~\ref{cogentheorem}.  Hence $\rank_{\Z_p} \M = 0$, $\tg_{\chi_0}(0) \ne 0$
and therefore $ \nu(0,\chi_0) = -1$.  This proves $ \nu(0,\chi) = -s$ and $\min(1-r_0,\nu) = -s = r_2 - r_0 $.
This completes the proof of part~(1) of the theorem.

For part~(3), we fix $ e $ in $ \B_\chi(E) $,
so $ r_0 = 0 $ by Theorem~\ref{cogentheorem}(3).
Then the equivalences follow from the case $r_2 = 0$ in part~(1)
and Proposition~\ref{divprop}.
For the very last formula, we observe that this is covered by the case $r=0$ in the paragraph
following~\eqref{eq1} since $ \# H^i(\O_{k,S}, \qlatmtt{E}{\chi}{1-e}) = c_i $ for $ i=0,1 $ and
$r_1 = r_2$ by Theorem~\ref{cogentheorem}(2).
This concludes the proof of Theorem~\ref{introtheorem}.
\end{proof}

Note that if the equality
\begin{equation} \label{ordequality}
 \nu_S(1-e,\chi) = \cork{2,S}{1-e}{\chi} - \cork{0,S}{1-e}{\chi} 
\end{equation}
holds in Theorem~\ref{introtheorem}(1) for some $ S $ then it holds
for every choice of $S$ by Proposition~\ref{gysinprop}(2), Theorem~\ref{cogentheorem}(2) and Remark~\ref{deulzero}.
If it holds for all totally real $k$ and all
$1 $-dimensional even Artin characters $\chi : G_k \rightarrow E$ with
$[k_\chi:k]$ not divisible by $ p $, then it holds for all even Artin characters of $G_k$ for all totally real $k$.
This is because such an equality is
preserved under the induction argument in the proof of
Theorem~\ref{ebn} and also under the arguments involving Brauer
induction in the proof of Theorem~\ref{ebn2}.  Since
$ \cork{0,S}{1-e}{\chi} $ is independent of $ S $ as stated
in Theorem~\ref{cogentheorem}(2), we see from Proposition~\ref{gysinprop}(2) and Remark~\ref{deulzero},
the proof of the case $l=0$ of Theorem~\ref{ebn}, and the
proof of Lemma~\ref{isogeny_lemma}, that the equality in~\eqref{ordequality}
for such an Artin character $ \chi $ is equivalent to all distinguished polynomials
$g_j(T) $ in $\Oe[T]$ having order at most~1 at $ T=q_k^{e-1} - 1 $, where
$ Y =\M \otimes_{\Z_p[G]} \slat{\chi} $ as in the application of
Lemma~\ref{isogeny_lemma} in the proof of Theorem~\ref{ebn} is isogenous to
$ \bigoplus_{i=1}^r \frac{\O[[T]]}{(\pi^{\mu_i})} \oplus \bigoplus_{j=1}^s \frac{\O[[T]]}{(g_j(T))}$.
We therefore formulate the following (partly folklore) conjecture.

\begin{conjecture} \label{conjecture}
Let $ k $ be a totally real number field, $ p $ a prime number, $ E $ a finite extension of $ \Q_p $,
and $ \chi : G_k \to E $ an even Artin character realizable over $ E $.  If $\chi$ is $1$-dimensional
then let $\slat{\chi} = \Oe$ on which $G_k$ acts through $\chi$.
Assume that $ S $ contains  the primes above $ p $ as well as the finite primes of $ k $ at which $ k_\chi/k $ is ramified.
Let $\M$ be the Galois group of the maximal Abelian extension of $(k_\chi)_\infty$ that is unramified outside
of the primes above $p$ and infinity.
Then for $ e $ in $ \B_k(E) $ we have the following.
\begin{enumerate}
\item
If $ \chi $ is 1-dimensional of order prime to $ p $, then the $ g_j $ 
corresponding to
$ Y =  \M \otimes_{\Z_p[G]} \slat{\chi} $ are square-free.

\item
If $ \chi $ is 1-dimensional of order prime to $ p $, then
the $ g_j $
corresponding to
$ Y =  \M \otimes_{\Z_p[G]} \slat{\chi} $
have no multiple roots $ z $ in $E$ for which $ |z-1|_p < p^{-1/(p-1)} $.

\item
If $ \chi $ is 1-dimensional of order prime to $ p $, then equality
holds in~\eqref{ordequality}.

\item
Equality holds in~\eqref{ordequality}.
\end{enumerate}
\end{conjecture}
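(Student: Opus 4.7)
The plan is to organise the argument around the chain of implications $(1)\Rightarrow(2)\Rightarrow(3)\Rightarrow(4)$, so that only part~(1) requires genuinely new input. For $(3)\Rightarrow(4)$ the Brauer-induction argument from the proof of Theorem~\ref{ebn2}, combined with additivity of both sides of~\eqref{ordequality} in $\chi$ and the compatibility of $L_{p,S}$ with induction (Remark~\ref{deuler-induction}), already suffices; this reduction is essentially spelled out just above the conjecture. For $(2)\Rightarrow(3)$ one uses the authors' observation that~\eqref{ordequality} for $1$-dimensional $\chi$ of order prime to $p$ is equivalent to each $g_j(T)$ having order at most $1$ at $T=q_k^{e-1}-1$: for $e$ in $\B_k(E)$ the point $z=q_k^{e-1}$ sits inside the disc $|z-1|_p<p^{-1/(p-1)}$, so~(2) delivers the needed simplicity at every relevant point. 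The implication $(1)\Rightarrow(2)$ is trivial.

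Hence the entire content of the conjecture reduces to part~(1). My strategy there is to combine Theorem~\ref{mainc} (together with Assumption~\ref{2-mainc} when $p=2$) with~\eqref{LpIw} so as to identify $\prod_j g_j(T)$, up to a power of a uniformiser and a unit of $\Oe[[T]]^\times$, with the polynomial factor in the Weierstra\ss\ decomposition of $L_p(s,\chi,k)$ under the substitution $T=q_k^{1-s}-1$. Part~(1) then becomes the assertion that every zero of $L_p(s,\chi,k)$ on $\B_k$ is simple. I would try to prove this structurally, by analysing $Y=\M\otimes_{\Z_p[G]}\slat{\chi}$ as a torsion $\Oe[[T]]$-module and establishing two things simultaneously: first, that $Y$ has no nonzero pseudo-null $\Oe[[T]]$-submodule (a form of Greenberg's conjecture for the plus-part of $\M$), and second, that the distinguished polynomials $g_j$ appearing in Definition~\ref{structure} for this particular $Y$ are pairwise coprime. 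Two complementary attacks worth keeping in reserve are a two-variable strategy that deforms $\chi$ along a second $\Z_p$-direction and uses non-vanishing of a transverse derivative to force simple zeros in the cyclotomic direction, and an equivariant-main-conjecture approach that controls Fitting ideals rather than characteristic ideals of~$\M$.

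The main obstacle is squarely~(1) itself. Square-freeness of the characteristic ideal of a cyclotomic Iwasawa module is an essentially open folklore problem: one cannot presently exclude a summand of the form $\Oe[[T]]/(g(T)^2)$ in $Y$; for a general totally real $k$ there is no canonical second $\Z_p$-extension along which to interpolate beyond the cyclotomic one; and the known equivariant main conjectures are available only under rather restrictive hypotheses on $k$ and $\chi$. Thus parts~(2)--(4) are formal consequences of~(1), whereas~(1) itself lies beyond the techniques developed in this paper and would require a genuinely new ingredient---most plausibly a sharpening of Greenberg's conjecture for $(k_\chi)_\infty$, or structural information coming from non-commutative or multivariable Iwasawa theory that records finer information about $\M$ than its characteristic ideal alone.
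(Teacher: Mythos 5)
This statement is Conjecture~\ref{conjecture}: the paper does not prove it, and neither do you --- nor do you claim to. What the paper does provide is exactly the web of implications you describe, in the paragraph preceding the conjecture: (1)$\Rightarrow$(2) is immediate; (2) is equivalent to (3) because, via Theorem~\ref{mainc} (and Assumption~\ref{2-mainc} for $p=2$) together with the proof of the case $l=0$ of Theorem~\ref{ebn} and of Lemma~\ref{isogeny_lemma}, equality in~\eqref{ordequality} for a $1$-dimensional $\chi$ of order prime to $p$ amounts to each $g_j$ having order at most $1$ at $T=q_k^{e-1}-1$, and the points $q_k^{e-1}$ for $e$ in $\B_k(E)$ are precisely the $z$ with $|z-1|_p<p^{-1/(p-1)}$; and (3) for all $(k,\chi,E)$ implies (4) because the equality~\eqref{ordequality} is preserved under the induction step of Theorem~\ref{ebn} and the Brauer-induction argument of Theorem~\ref{ebn2}, together with the independence of $S$ coming from Proposition~\ref{gysinprop}(2) and Remark~\ref{deulzero}. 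Your proposal reproduces this reduction faithfully and correctly identifies part~(1) as the genuinely open core, for which the paper offers no argument either; so there is no proof to compare yours against, and your honest assessment of the status is the right one.

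Two small corrections to your discussion of part~(1). First, identifying (1) with ``every zero of $L_p(s,\chi,k)$ on $\B_k$ is simple'' overstates it: by Theorem~\ref{mainc} the product $\prod_j g_j$ is $\tg_\chi$, so simplicity of all zeros of $L_p$ is square-freeness of the \emph{product}, whereas the conjecture only asks each individual $g_j$ to be square-free (distinct $g_j$ sharing a factor would be compatible with (1)--(3) but would produce a multiple zero of $L_p$); proving your stronger statement would of course suffice, but it is not what (1)--(3) require. Second, the absence of a nonzero finite (equivalently pseudo-null, in one variable) $\Oe[[T]]$-submodule of $\M\otimes_{\Z_p[G]}\slat{\chi}$ is not part of what is missing: it is already established and used in the proof of Theorem~\ref{ebn}, via Iwasawa's theorem and the fact that $p\nmid\#G$ there. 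The only substantive open point is the multiplicity-one statement for the $g_j$, and, as you say, that lies beyond the techniques of this paper.
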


For the same $ k $, $ e $ and $ \chi $, (1) implies (2),
and, as discussed before the statement
of the conjecture, (2) is equivalent to (3).
As also implied by the discussion there, if we fix $ e $, then
(3) for all $ k $, $ \chi $ and $ E $ implies (4), and the converse
is clear.

\begin{remark} \label{sisone}
In this remark we make Assumption~\ref{2-mainc} if $ p=2 $.

(1)
If $e=1$ and $\chi$ is the trivial character then parts~(2) and~(3)
of Theorem~\ref{cogentheorem} and Theorem~\ref{introtheorem}(1)
imply the following are equivalent:
\begin{enumerate}
\item[(a)] $\zeta_p(s,k)$ has a simple pole at $s=1$;
\item[(b)] $\corank_{\Z_p} H^1(\O_{k,S},\Q_p/\Z_p) = 1$;
\item[(c)] $H^2(\O_{k,S},\Q_p/\Z_p) = 0$.
\end{enumerate}
Those statements are all equivalent with the Leopoldt conjecture
for $ k $ (see \cite[Theorem~10.3.6]{NSW2nded}).

Clearly, an equality $ \nu_S(0,\chi) = - \cork{0,S}{0}{\chi} $ with
$ \chi $ the trivial character implies~(a) by Theorem~\ref{cogentheorem}(3), hence the Leopoldt
conjecture for~$ k $.  For other characters, 
recall that Brauer's theorem
$ \chi = \sum_i a_i \Ind_{H_i}^G (\chi_i) $
for some non-zero integers $ a_i $ and 1-dimensional characters $ \chi_i $ of certain solvable
subgroups $ H_i $.
If $H_i$ is nontrivial and $\chi_i = 1_H$, then
$ \Ind_{[H_i,H_i]}^G (1_{[H_i,H_i]}) = \Ind_{H_i}^G(1_{H_i}) + \sum_j \Ind_{H_i}^G(\chi_j) $
where the sum runs through the non-trivial 1-dimensional characters of $ H_i $.
Repeating this process if necessary,
we may assume that $\chi_i$ is trivial only if $H_i$ is trivial.
In that case, Frobenius reciprocity for $ 1_G $ implies that 
if $ \chi $ does not contain the trivial character of $ G $, then 
all $ \chi_i $ are non-trivial.
Therefore the Leopoldt conjecture for all totally real number fields
implies $L_p(s,\chi,k)$, for an even character $\chi$ of $ G_k $ 
not containing the trivial character, is defined and non-zero at $s=1$, so
by Theorem~\ref{cogentheorem}(3) and Theorem~\ref{introtheorem}(3),
$\nu_S(0,\chi) = - \cork{0,S}{0}{\chi} = 0$.  Since the Leopoldt
conjecture for $ k $
implies $\nu_S(0,\chi) = - \cork{0,S}{0}{\chi} $ if $ \chi $ is the trivial
character, the Leopoldt conjecture for all totally real $ k $
implies equality for all $ k $ and all even Artin characters
$ \chi $ of $ G_k $.

(2)
For $e$ in $B_\chi(E)$, $H^1(\O_{k,S},\qlatmtt{E}{\chi}{1-e})$ is
finite if and only if $ L_{p,S}(e,\chi,k) $ is defined and non-zero
by Theorem~\ref{introtheorem}(3).
If $ \chi $ contains the trivial character and $ e=1 $, then
$H^1(\O_{k,S},\qlatmtt{E}{\chi}{1-e})$ is infinite by parts~(2)
and~(3) of Theorem~\ref{cogentheorem}.
The Leopoldt conjecture (for all totally real $ k $) would imply that
then $ L_{p,S}(e,\chi,k) $ should not be defined at $ e=1 $,
extending this equivalence to $ e $ in $ B_k(E) $.
\end{remark}

To conclude this section, we briefly discuss the case of $ 1 $-dimensional~$ \chi $.

\begin{example}\label{Hexample}
With notation as in Theorem~\ref{introtheorem}, suppose that $k$ is totally real, $\chi$ is
a 1-dimensional even Artin character, and $e$ is in $\B_\chi(E)$ with $L_p(e,\chi,k) \ne 0$.
Then $h_\chi(T) = 1$ unless $k_\chi \subseteq k_\infty$, in which case $h_\chi(T) = \chi(\gamma_0)(T+1) - 1$.
Note that in the latter case
\begin{alignat*}{1}
H^0(\O_{k,S},\qlatmtt{E}{\chi}{1-e}) =
\ker \left( \qlatmtt{E}{\chi}{1-e}\stackrel{1 - \gamma_0}{\longrightarrow}
\qlatmtt{E}{\chi}{1-e}\right)
\,.
\end{alignat*}
Since $\gamma_0$ acts on $\qlatmtt{E}{\chi}{1-e}$ as multiplication by $q_k^{1-e} \chi(\gamma_0)$
in this case, we have 
\begin{equation*}
\# H^0(\O_{k,S},\qlatmtt{E}{\chi}{1-e}) = |h_\chi(q_k^{1-e}-1)|_p^{-[E:\Q_p]}
\,.
\end{equation*}
Hence for any 1-dimensional $\chi$ we always have
\begin{equation}\label{preciseH0}
 \# H^0(\O_{k,S},\qlatmtt{E}{\chi}{1-e})) \ge |h_\chi(q_k^{1-e}-1)|_p^{-[E:\Q_p]} 
 \,.
\end{equation}
So by~\eqref{pLpowerseries} and Theorem~\ref{introtheorem}(3) we also have
\begin{equation}\label{preciseH1}
 \# H^1(\O_{k,S},\qlatmtt{E}{\chi}{1-e})) \ge |\pi^{\mu_\chi} \tg_\chi(q_k^{1-e}-1)|_p^{-[E:\Q_p]} 
\,.
\end{equation}
We distinguish three cases, with the third motivating why one needs to use $ \EC_S(e,\chi,k) $
rather than the individual cohomology groups (and to some extent necessitating the complexity of our proofs).

{\rm(1)}
$ [k_\chi:k] $ is divisible by a prime other than $ p $: 
then $ h_\chi(T) = 1 $ and
$ H^0 $ is trivial because it is contained in the kernel of multiplication by $1-\xi_q$ where $ \xi_q $
is a root of unity of order a prime $ q \ne p $.
Therefore both entries in~\eqref{preciseH0} are~1, and equality holds in both~\eqref{preciseH0}
and~\eqref{preciseH1} because of Theorem~\ref{introtheorem}(3).

{\rm(2)}
$ [k_\chi:k] $ is a power of $ p $ and $ k_\chi \subseteq k_\infty $.
As already mentioned, equality holds in~\eqref{preciseH0}, hence it also holds in~\eqref{preciseH1},
but in~\eqref{preciseH0} both sides are bigger than~1.

{\rm(3)}
$ [k_\chi:k] $ is a power of $ p $ but $ k_\chi \nsubseteq k_\infty$.
In this case $ h_\chi(T) = 1 $.  Note that any $g$ in $G$ acts on $\qlatmtt{E}{\chi}{1-e}\simeq E/\Oe
\mtt{1-e}$ as multiplication by an element of $\Oe^\times$ that reduces to $1$ in the residue field of $ \Oe $,
hence $H^0(\O_{k,S}, \qlatmtt{E}{\chi}{1-e})$
is non-trivial, and the inequalities in~\eqref{preciseH0} and~\eqref{preciseH1} are strict.
\end{example}

\section{Numerical examples}\label{secexamples}

The approximations $ \tPol (T) $ of the distinguished polynomials $ P(T) $ and the triviality of the $ \mu $-invariants
in the following examples were kindly provided to us by X.-F. Roblot. 
In all cases we have a totally real field $ K $ such that $ K/\Q $ is Galois with dihedral
Galois group $ G $ of order~8.   If $ k $ is the fixed field of the cyclic subgroup $ H $ of order~4 then we consider
$ L_5(s) :=  L_{5}(s,\chi,k) = L_5(s,\Ind_k^\Q(\chi),\Q) $
where $ \chi $ is a 1-dimensional character of order 4 of $ H $ with values in $ \mu_4 \subset \Q_5 $.
Note that $ \Ind_{k}^\Q (\chi) $ is the irreducible 2-dimensional character of $ \Gal(K/\Q) $ for either
possibility for $ \chi $, so $ L_5(s) $ is the same for either $ \chi $.

With notation as in~\eqref{pLpowerseries} we have $ h_\chi(T)=1 $, and used $ q_k= 1+5 $.
In all cases below Roblot
found that
$ m_\chi = 0 $, so $ L_5(s) = P(6^{1-s}-1) u(6^{1-s}-1) $
for $ s $ in $ \B_k = \{ s \text{ in } \C_p \text{ with } |s|_5 < 5^{3/4}\} $,
where $ u(T) $ is in $ \Z_5[[T]]^\times $ and
$ P(T) =  \tg_{\chi}(T) $ is a distinguished polynomial in $ \Z_5[T] $.
So, in particular, $ | L_5(s) |_5 =  | P(6^{1-s}-1) |_5 $.

Note that if $ S $ is a finite set of primes of $ \Q $ that includes $ 5 $
and the primes where $ K/\Q $ is ramified, then
by Remark~\ref{Mremark}(3) we have, for $ E/\Q_5 $ a finite extension and $ e $ in $ \B_\chi(E) = \B_k(E) $,
with the appropriate choice of lattices,
\begin{equation*}
H^i(\O_{\Q,S},\qlatmtt{E}{\Ind_{k}^\Q (\chi)}{1-e})\simeq H^i(\O_{k,S}, \qlatmtt{E}{\chi}{1-e})
\,.
\end{equation*}
We recall from Remark~\ref{coeffremark} that the coefficients in the right-hand side are
unique but that this does not necessarily hold in the left-hand side.

In the examples, $ K/k $ is unramified outside of the primes
above 5. We discuss $  H^i(\O_{k,S}, \qlatmtt{E}{\chi}{1-e}) $ only when 
$ S $ consists of those primes.
The sizes of those groups for larger $ S $ can be calculated similarly by taking into account various
$ | \dEul{v}(e,\chi\o_5^{-1},k) |_5 $ as in~\eqref{pLS}.  We leave this to the interested reader.

\begin{example}
$ k = \Q(\sqrt{145}) $, $ K $ is the Hilbert class field of $ k $, where $ P(T)=1 $.
Taking $ e $ in $ \B_k(\Q_5) = \Z_5 $,
one sees from Case~(1) of Example~\ref{Hexample}
that $ H^i(\O_{k,S}, \qlatmtt{E}{\chi)}{1-e}) $ with $ E=\Q_5 $
is trivial for $ i=0 $, hence the same holds for $ i=1 $ and $ i=2 $
by Theorem~\ref{introtheorem}.
In fact, the same statements are true if we use any finite extension $ E $
of $ \Q_5 $ and take $ e $ in $ \B_k(E) $.
\end{example}

\begin{example}\label{2ndexample}
$ k = \Q(\sqrt{41})$, $ K $ is the ray class field of $ k $ modulo 5.  Here
$ \tPol(T)=T-(5+\varepsilon) $ with $ |\varepsilon|_5 \le 5^{-10} $.
But $ \Eul{P}^\ast(s, \chi\o_5^{-1}, k) = 1-5^{-s} $, so that $  L_5 (0,\chi, k) = 0 $
by~\eqref{interpol}, and $ P(T) = T - 5 $.
Fix a finite extension $ E $ of $ \Q_5 $ and $ e $ in $ \B_k(E) $.
Again by Example~\ref{Hexample} we have that $ H^0(\O_{k,S}, \qlatmtt{E}{\chi)}{1-e}) = 0 $.
Taking $ e \ne 0 $, so that $ L_5(e) \ne 0 $, by Theorem~\ref{ebn} we have
$ \# H^1(\O_{k,S}, \qlatmtt{E}{\chi)}{1-e}) = \# \Oe/(6^{1-e}-6) = \# \Oe/(6^{-e}-1) = \# \Oe/(5e) $.

In fact, any $ \Z_5[[T]] $-submodule of finite index in $ \Z_5[[T]]/(T-5) \simeq \Z_5 $ is isomorphic with $ \Z_5 $
as $ \Z_5[[T]] $-module, so that, in the proof of Theorem~\ref{ebn} for $ l=0 $,
the part of $ \M $ on which $ H $ acts through $ \chi $ is $ \M \otimes_{\Z_5[H]}\Z_5(\chi) \simeq \Z_5 $
with the action of $ \gamma_0 $ given by multiplication by $ 6 $.
So by the proof of Lemma~\ref{isogeny_lemma} (in particular~\eqref{b_to_g}) we have
$ H^1(\O_{k,S}, \qlatmtt{E}{\chi)}{1-e}) \simeq \Oe/(5e) $, also
when $ e=0 $.
\end{example}

\begin{remark}\label{funny}
Note that the calculation in Example~\ref{2ndexample} for each of the two possibilities for $ \chi $
implies the existence of $ L_\chi/K_\infty $, such that $ L_\chi/k $ is Galois with Galois group isomorphic to
$ (H\times (1+5\Z_5)) \ltimes \Z_5 $, where the action of $ (h,u) $ on $ \Z_5 $ (the part of $ \M $ on
which $ H $ acts through $ \chi $) is given by multiplication
by $ \chi(h) u $.

Working over $ \Q $ we find that the two $ L_\chi $ together
give rise to
extensions $ L_\infty/K_\infty/\Q $
with $ \Gal(L_\infty/\Q) $ isomorphic to
$ (G\times (1+5\Z_5)) \ltimes \Z_5^2 $ where $ G $ acts on $ \Z_5^2 $ as on the irreducible $ 2 $-dimensional
representation of $ G $, and $ 1+5\Z_5 $ by multiplication.
\end{remark}

\begin{example}\label{3rdexample}
$ k = \Q(\sqrt{793})$, $ K $ is the Hilbert class field of $ k $, and
\begin{equation*}
 \tPol(T) =  T^2 + ((0.341430342)_5 + \varepsilon_1) \, T + ((0.103034211)_5 +  \varepsilon_2)\,,
\end{equation*}
with $ |\varepsilon_i|_5 \le 5^{-10} $.
Here $ (0.a_1\cdots a_m)_5 = \sum_{i=1}^m a_i 5^m $.
Therefore $ P(T) $ is Eisenstein, hence is irreducible over $ \Z_5 $ and $ |L_5(s) |_5 = 1 $ for $ s $ in
$ \Z_5 $.  But $ \Z_5[[T]]/(P(T)) \simeq \Z_5[\sqrt5] = \O_{\Q_5(\sqrt5)} $ so any $ \Z_5[[T]] $-submodule
of finite index is isomorphic to $ \Z_5[[T]]/(P(T)) $ with $ \gamma_0 $ acting as multiplication by $ 1+T $.
As in Example~\ref{2ndexample} we find that
$ H^1(\O_{k,S}, \qlatmtt{E}{\chi)}{1-e}) \simeq \Oe/(P(6^{1-e}-1)) $
for each finite extension $ E $ of $ \Q_5 $ and $ e $ in $ \B_k(E) $ as
$ H^0(\O_{k,S}, \qlatmtt{E}{\chi)}{1-e}) $	is trivial.
Unless $ | 6^{1-e}-1 - \alpha|_5 \leq 5^{-19/2} $ for a root
\begin{equation*}
\alpha= (0.104202323)_5 \pm (3.41423 114)_5 \sqrt5 + \varepsilon 
\end{equation*} 
of $ P(T) $ (with $ |\varepsilon|_5 \le 5^{-19/2} $), we may replace $ P(6^{1-e}-1) $ with $ \tPol(6^{1-e}-1) $.
\end{example}

\begin{remark}\label{funny2}
Just as in Remark~\ref{funny}, the calculation in Example~\ref{3rdexample} for each of the two possibilities
implies the existence of $ L_\chi/K_\infty $, such that $ L_\chi/k $ is Galois with Galois group isomorphic to
$ (H\times (1+5\Z_5)) \ltimes \Z_5[\sqrt{5}] $,
where the action of $ (h,u) $ on $ \Z_5[\sqrt{5}] $ (the part of $ \M $ on
which $ H $ acts through $ \chi $) is given by multiplication
by $ \chi(h) (1 + \alpha)^{\log(u)/\log(6)} $
for $ \alpha $ in $ \Z_5[\sqrt5] $ a root of $ P(T) $. 

Working over $ \Q $ we find that the two $ L_\chi $ together
give rise to
extensions $ L_\infty/K_\infty/\Q $
with $ \Gal(L_\infty/\Q) $ isomorphic to
$ (G\times (1+5\Z_5)) \ltimes \Z_5[\sqrt5]^2  $ where $ G $ acts as on $ \Z_5[\sqrt{5}]^2 $ as on the irreducible $ 2 $-dimensional
representation of $ G $, and $ u $ in $ 1+5\Z_5 $ as multiplication by $ (1 + \alpha)^{\log(u)/\log(6)} $
for $ \alpha $ a root of $ P(T) $ in $ \Z_5[\sqrt5] $.
\end{remark}

\section{The equivariant Tamagawa number conjecture} \label{etnc_section}

Let $k$ be a number field, $S$ a finite set of finite primes in $k$, $\Sig$ the set of infinite places
of $k$ and $G_S$ the Galois group of the maximal extension of $k$ that is unramified outside of $S \cup \Sig$.
For any place $v$ of $k$ let $G_{w_v}$ be the decomposition group in $G_k$
of a prime $ w_v $ of $ \kbar $ lying above $ v $.
For a finitely generated $\Z_p$-module $A$ with continuous $G_S$-action, define
\begin{equation*}
R\Gamma_c(\O_{k,S},A) := \mathrm{Cone} \left( C^{\bullet}(G_S,A) \rightarrow \oplus_{v \in S \cup \Sig}
C^{\bullet}(G_{w_v},A) \right)[-1],
\end{equation*}
where $C^\bullet$ denotes the standard complex of (continuous) cochains, and
the morphism is induced by the natural maps $G_{w_v} \rightarrow G_k \rightarrow G_S$.
We denote the cohomology of $R\Gamma_c(\O_{k,S},A)$ by $\H_c^\ast(\O_{k,S},A)$.
Applying inverse limits to the resulting long exact sequence of cohomology groups
with coefficients $A/p^nA$
is exact because the $H^\ast(G_S,A/p^n A)$ and $H^\ast(G_{w_v},A/p^n A)$ are finite.
Therefore
$\H_c^\ast(\O_{k,S},A) \simeq \varprojlim_n \H_c^\ast(\O_{k,S},A/p^n A)$
by the five lemma.

If $R$ is a commutative ring and $\Lambda$ a perfect complex of $R$-modules then
$H^i(\Lambda)$ is trivial for all but finitely many $i$ and $\Det_R \Lambda$ is defined
(see \cite{KnMu76} for the definition and properties of the determinant functor).
Moreover, if $H^i(\Lambda)$ is projective for all $i$, then by \cite[Proposition 2.1(e)]{bufl99} there is
a canonical isomorphism $\Det_R \Lambda \to \otimes_i \Det_R^{(-1)^i} H^i(\Lambda)$.
If $R$ is the valuation ring $\Oe$ in a finite extension $E$ of $\Q_p$, $H^i(\Lambda)$ is finite for all $i$, and trivial for all but
finitely many $i$, then we have a canonical composition
\begin{equation} \label{theta}
 \Det_{\Oe} \Lambda
\simeq
 \otimes_i \Det_{\Oe}^{(-1)^i} H^i(\Lambda)
\to
 \otimes_i \Det_E^{(-1)^i} (H^i(\Lambda)\otimes_{\Oe} E)
\simeq
 E
\,.
\end{equation}

\begin{theorem} \label{etnc}
Let $ k $ be a totally real number field, $ p $ a prime number, $ E $ a finite extension of $ \Q_p $
with valuation ring $ \Oe $, $m$ a negative integer, $\eta$ an Artin character of $G_k$ realizable
over $E$ such that $\eta(c) = (-1)^{m-1} \eta(\id_{\kbar})$ for every complex conjugation $c$ in $G_k$,
$\vlat{E}{\eta^\vee}$ an Artin representation of $G_k$
over $E$ with character $\eta^\vee$, $\lat{E}{\eta^\vee}$ an $\Oe$-lattice for $\eta^\vee$,
and $ S $ a finite set of primes of $k$ containing the primes above $p$
as well as all the finite primes of $ k $ at which $ \eta $ is ramified.
Then the following hold, where for $p=2$ we make Assumption~\ref{2-mainc}.
\begin{enumerate}
\item
$R\Gamma_c(\O_{k,S},\lat{E}{\eta^\vee}(m))$ and $R\Gamma_c(\O_{k,S},\vlat{E}{\eta^\vee}(m))$
are perfect complexes.

\item
$\H_c^i(\O_{k,S},\lat{E}{\eta^\vee}(m))$ is finite for all $i$ and trivial if $i \ne 2,3$.

\item
In $E$ we have
\begin{equation} \label{etnc_eq}
L_S^\ast(m,\eta,k) \cdot \Oe = \theta (\Det_{\Oe} R\Gamma_c(\O_{k,S},\lat{E}{\eta^\vee}(m)))
\,,
\end{equation}
with $L_S^\ast(m,\eta,k)$ as in Section~\ref{pL-functions}, and
$\theta$ the composition~\eqref{theta} for $\Lambda = R\Gamma_c(\O_{k,S},\lat{E}{\eta^\vee}(m))$.

\end{enumerate}
\end{theorem}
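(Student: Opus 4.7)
The plan is to set $\chi = \eta\op^{1-m}$, which is an even Artin character of $G_k$ realizable over $E$: evenness follows from the hypothesis $\eta(c) = (-1)^{m-1}\eta(\id_{\kbar})$ combined with $\op(c) = -1$, and one has $\chi\op^{m-1} = \eta$. Because $m < 0$, the interpolation formula~\eqref{preLSinterpol} gives $L_{p,S}(m,\chi,k) = L_S^\ast(m,\eta,k)$, which is non-zero by hypothesis. Applying Theorem~\ref{introtheorem}(3) (with Assumption~\ref{2-mainc} when $p=2$), the groups $H^i(\O_{k,S},\qlatmtt{E}{\chi}{1-m})$ are finite for all $i$, trivial for $i = 2$, and
\begin{equation*}
|L_S^\ast(m,\eta,k)|_p^{[E:\Q_p]} = \frac{\#H^0(\O_{k,S},\qlatmtt{E}{\chi}{1-m})}{\#H^1(\O_{k,S},\qlatmtt{E}{\chi}{1-m})}.
\end{equation*}

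Next I would use the duality of Remark~\ref{duality-remark} to transport these statements to the compact support side. Identifying $\lat{E}{\eta^\vee}(m)$ with $\latmtt{E}{\eta^\vee\op^m}{m}$ and applying~\eqref{alpha_duality} yields a canonical isomorphism
\begin{equation*}
\H_c^i(\O_{k,S},\lat{E}{\eta^\vee}(m)) \simeq H^{3-i}(\O_{k,S},\qlatmtt{E}{\chi}{1-m})^\vee
\end{equation*}
for $i = 0,1,2,3$. For $p$ odd this immediately gives part~(2): $\H_c^2$ and $\H_c^3$ are finite, $\H_c^1 = 0$ since $H^2 = 0$, and $\H_c^0 = 0$ since $H^3 = 0$ by Remark~\ref{coh_dim}, while $\H_c^i = 0$ for $i \ge 4$ because $R\Gamma_c$ is concentrated in degrees $[0,3]$. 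For $p=2$, where the Milne-style compact support cohomology appearing in~\eqref{alpha_duality} differs from the cone $\H_c^\bullet$ of Section~\ref{etnc_section} through its archimedean contributions, I would verify the vanishing $\H_c^0 = \H_c^1 = 0$ directly from the cone long exact sequence: use that $\lat{E}{\eta^\vee}(m)^{G_S} = 0$ since $\psi_p^m$ has infinite order for $m \ne 0$, that $\lat{E}{\eta^\vee}(m)^{G_{w_v}} = 0$ at every real place because complex conjugation acts as $-1$ on the torsion-free module $\lat{E}{\eta^\vee}(m)$, and that at each finite $v \in S$ the Frobenius eigenvalues on $\vlat{E}{\eta^\vee}(m)^{I_{w_v}}$ have absolute value $\Nm(v)^m < 1$ and therefore cannot equal $1$.

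Part~(1) then follows immediately, since a bounded complex of $\Oe$-modules over the discrete valuation ring $\Oe$ is perfect as soon as its cohomology is finitely generated; here the cohomology is in fact finite. Tensoring with $E$ kills the torsion, so $R\Gamma_c(\O_{k,S},\vlat{E}{\eta^\vee}(m)) \simeq R\Gamma_c(\O_{k,S},\lat{E}{\eta^\vee}(m)) \otimes_\Oe E$ is an acyclic bounded $E$-complex, which is trivially perfect over $E$.

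For part~(3), since only $\H_c^2$ and $\H_c^3$ contribute, the canonical isomorphism of \cite[Proposition~2.1(e)]{bufl99} gives
\begin{equation*}
\Det_\Oe R\Gamma_c(\O_{k,S},\lat{E}{\eta^\vee}(m)) \simeq \Det_\Oe \H_c^2 \otimes_\Oe \Det_\Oe^{-1} \H_c^3,
\end{equation*}
and unwinding the Knudsen-Mumford convention, $\theta$ sends this to the fractional ideal generated by some $\delta \in E$ with $|\delta|_p^{[E:\Q_p]} = \#\H_c^3/\#\H_c^2$. The duality isomorphism gives $\#\H_c^3 = \#H^0$ and $\#\H_c^2 = \#H^1$, so the displayed identity from the first paragraph yields $|\delta|_p^{[E:\Q_p]} = |L_S^\ast(m,\eta,k)|_p^{[E:\Q_p]}$; since fractional ideals of the discrete valuation ring $\Oe$ are determined by the $p$-adic absolute value of a generator, this establishes~\eqref{etnc_eq}. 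The principal obstacle is the case $p = 2$: the cone definition of $\H_c^\bullet$ from Section~\ref{etnc_section} does not in general agree with Milne's compact support cohomology used in~\eqref{alpha_duality}, so the duality isomorphism and the vanishing $\H_c^0 = \H_c^1 = 0$ both need to be verified by a careful direct local-global analysis in that case.
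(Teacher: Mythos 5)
Your overall route coincides with the paper's: twist by $\op^{1-m}$, feed the non\-vanishing of $L_{p,S}(m,\eta\op^{1-m},k)$ into Theorem~\ref{introtheorem}(3), dualize via~\eqref{alpha_duality}, and unwind the determinant using only degrees $2$ and $3$; for $p$ odd this is essentially the paper's argument. The genuine gap is exactly the place you defer to ``a careful direct local-global analysis'' for $p=2$, and the analysis you sketch cannot close it. Your three local facts ($M^{G_S}=0$, $M^{G_{w_v}}=0$ at real places because $c$ acts as $-1$ on the torsion-free lattice, and $M^{G_{w_v}}=0$ at finite $v\in S$ by the weight argument) only kill $\H_c^0$ and the cokernel term in the cone long exact sequence; they leave $\H_c^1\simeq\ker\bigl(H^1(G_S,M)\to\oplus_{v\in S\cup\Sig}H^1(G_{w_v},M)\bigr)$, a Tate--Shafarevich-type group whose vanishing is a global statement of the same depth as the theorem itself -- it is precisely what duality plus Theorem~\ref{introtheorem}(3) (i.e.\ the main-conjecture input) deliver, and it cannot be obtained from local vanishing alone. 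You also leave the identification of $\H_c^2,\H_c^3$ with the groups in~\eqref{alpha_duality} unproved for $p=2$, and those orders are what enter~\eqref{etnc_eq}. The missing observation, which is how the paper makes $p=2$ no harder than $p$ odd: since $\eta(c)=(-1)^{m-1}\eta(\id_{\kbar})$ and one twists by $\psip^m$, complex conjugation acts as $-1$ on $M=\lat{E}{\eta^\vee}(m)$, hence on each finite quotient one has $H^0(G_{w_v},M/p^nM)=\widehat{H}^0(G_{w_v},M/p^nM)$ at every real place; therefore the cone complex computes Milne's (Tate-modified) compact support cohomology in all degrees $i\ge1$, and taking $\varprojlim_n$ and applying~\eqref{alpha_duality} gives $\H_c^i(M)\simeq H^{3-i}(\O_{k,S},\qlatmtt{E}{\eta\op^{1-m}}{1-m})^\vee$ for $i\ge1$ uniformly in $p$. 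That single comparison yields $\H_c^1=0$, the vanishing in degrees $\ge4$, and the orders of $\H_c^2,\H_c^3$ all at once.

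Two further points. Your claim that $\H_c^i=0$ for $i\ge4$ ``because $R\Gamma_c$ is concentrated in degrees $[0,3]$'' is false as stated for $p=2$: the archimedean cochain complexes contribute in all odd degrees (with $c$ acting as $-1$ one has $H^{2j+1}(k_v,M)\simeq M/2M\ne0$), so the high-degree vanishing is again a consequence of the comparison with Milne's groups and Artin--Verdier duality, not of boundedness of the complex; note your computation in~(3) tacitly assumes only degrees $2,3$ contribute, so this vanishing is needed there as well. Relatedly, the paper obtains part~(1) by citing \cite[Theorem~5.1]{Flach00}; your alternative (perfectness over the discrete valuation ring $\Oe$ from bounded, finitely generated cohomology) is fine in principle but presupposes exactly the finiteness and high-degree vanishing just discussed, so for $p=2$ it inherits the same gap. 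Finally, ``non-zero by hypothesis'' is a slight misstatement: $L_S^\ast(m,\eta,k)\ne0$ is not hypothesized but follows from the parity condition together with $m<0$, which also guarantees that the Euler factors at the primes in $S$ do not vanish.
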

\begin{proof}
Part~(1) follows from \cite[Theorem~5.1]{Flach00}.  For parts~(2) and~(3), we
abbreviate $\lat{E}{\eta^\vee}(m)$ to $M$, and begin by computing $\H_c^*(\O_{k,S},M)$.
We have $\H_c^0(\O_{k,S},M) = 0$
because $H^0(G_S,M) \rightarrow \oplus_{v \in S} H^0(G_{w_v},M)$ is injective.
For a real place $v$ in $S$, note that the complex conjugation $c_v$ in $\Gal(\overline{k_v}/k_v)$
acts on $M$ as multiplication by $-1$, so that
$H^0(G_{w_v},M/p^nM) \simeq \widehat{H}^0(G_{w_v},M/p^nM)$, where the right-hand side denotes
Tate cohomology.  Therefore, for $i \ge 1$, $\H_c^i(\O_{k,S},M/p^nM) \simeq H_c^i(\O_{k,S},M/p^nM)$
where the right-hand side denotes the cohomology with compact support as in \cite[Section~II \S~2, p.203]{Mil86}.
Taking inverse limits and using~\eqref{alpha_duality} we have
\begin{equation*}
\H_c^i(\O_{k,S},M) \simeq \left( H^{3-i}(\O_{k,S},\qlatmtt{E}{\eta \op^{1-m}}{1-m}) \right)^\vee
\end{equation*}
for $i \ge 1$,
where $\qlat{E}{\eta}$ and $\qlat{E}{\eta \op^{1-m}}$ are obtained from $\lat{E}{\eta^\vee}$.
We see as just after~\eqref{CL-statement} that $ L_S^\ast(m,\eta,k) \ne 0$
because $m < 0$, and hence $L_{p,S}(m,\eta \op^{1-m},k) \ne 0$ by~\eqref{preLSinterpol}.
Then according to Theorem~\ref{introtheorem}(3),
$H^j(\O_{k,S}, \qlatmtt{E}{\eta \op^{1-m}}{1-m})$ is finite for
$j = 0, 1$ and trivial for $j=2$, completing the proof of part~(2).

For part~(3), we note that for any finite $\Oe$-module $A$, $I^{[E:\Q_p]} = (\# A) \cdot \Oe$ with
$I$ the image of the composition $ \Det_{\Oe} A \to \Det_E (A \otimes_{\Oe} E) \simeq E $.
Therefore
\begin{alignat*}{1}
\theta \left( \Det_{\Oe} R\Gamma_c(\O_{k,S},M) \right)^{[E:\Q_p]}
& =
\frac{\# \H_c^2(\O_{k,S},M)}{\# \H_c^3(\O_{k,S},M)} \cdot \Oe \\
& =
\frac{\# H^1(\O_{k,S},\qlatmtt{E}{\eta \op^{1-m}}{1-m})}{\# H^0(\O_{k,S},\qlatmtt{E}{\eta \op^{1-m}}{1-m})} \cdot \Oe \\
& =
\EC_S(m,\eta\op^{1-m},k)^{-[E:\Q_p]} \cdot \Oe \\
& =
L_{p,S}(m,\eta\op^{1-m},k)^{[E:\Q_p]} \cdot \Oe \\
& =
 L_S^*(m,\eta,k)^{[E:\Q_p]} \cdot \Oe
\,,
\end{alignat*}
where the last two equalities follow from Theorem~\ref{introtheorem}(3) and the interpolation
formula~\eqref{preLSinterpol}.
\end{proof}

\begin{corollary} \label{etnccorollary}
Let $k$ be a totally real number field, $K/k$ a finite Galois extension with Galois group $G$ and
$m$ a negative integer.  Assume that $K$ is totally real if $m$ is odd and $K$ is a CM field if $m$ is even.
Let $\mathfrak{M}$ denote a maximal $\Z[G]$-order inside $\Q[G]$.  Let $\pi_m = 1$ if $m$ is odd and
$\pi_m = (1 - c)/2$ if $m$ is even, where $c$ is the unique complex conjugation in $G$.
Let $p$ be a prime number and if $p = 2$ make Assumption~\ref{2-mainc}.
Then the $p$-part of the equivariant Tamagawa number conjecture $($\cite[Conjecture 6, p.535]{bufl99}$)$ holds for the
pair $(\pi_m h^0(\Spec K)(m),\pi_m\mathfrak{M})$.
\end{corollary}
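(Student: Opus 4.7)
The plan is to reduce the $p$-part of the equivariant Tamagawa number conjecture for $(\pi_m h^0(\Spec K)(m),\pi_m\mathfrak{M})$ character by character to Theorem~\ref{etnc}(3). As noted in the introduction, at a negative Tate twist of an Artin motive with coefficients in a maximal order the conjecture is equivalent to the Bloch--Kato conjecture of~\cite[\S5]{Blo-Kat90}, and in this setting it amounts to a single equality of invertible $(\mathfrak{M}\otimes\Z_p)$-submodules of $\Q_p[G]$ of the form
\begin{equation*}
 \theta\bigl(\Det_{\mathfrak{M}\otimes\Z_p} R\Gamma_c(\O_{k,S},\pi_m T_p)\bigr)=L_S^\ast(m,\pi_m h^0(\Spec K),k)\cdot(\mathfrak{M}\otimes\Z_p),
\end{equation*}
where $S$ is any finite set of finite primes of $k$ containing the primes above $p$ and those where $K/k$ ramifies, $T_p$ is a $G$-stable $\Z_p$-lattice in the $p$-adic realization of $h^0(\Spec K)(m)$ (the choice is immaterial because the base order is maximal), and $\theta$ is the composition~\eqref{theta} applied component by component. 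At a negative integer the fundamental line is purely $p$-adic, carrying no transcendental period, so once the maximal-order reduction is made the conjecture takes this simple shape.

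First I would use that $\mathfrak{M}\otimes\Z_p$ is a maximal $\Z_p[G]$-order in the Wedderburn decomposition $\Q_p[G]=\prod_\rho A_\rho$ over isomorphism classes of $\Q_p$-irreducible representations of $G$. Each $A_\rho$ is a matrix algebra over a division algebra with center a finite extension $E_\rho/\Q_p$; by maximality together with Morita equivalence, the $\rho$-component of $\mathfrak{M}\otimes\Z_p$, acting on the corresponding piece of $\pi_m T_p$, is equivalent to the pair $(\mathcal{O}_{E_\rho},\lat{E_\rho}{\eta_\rho^\vee})$ for some Artin character $\eta_\rho\colon G_k\to E_\rho$ representing the Galois orbit of characters of $G$ attached to $\rho$. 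The idempotent $\pi_m$ selects exactly the $\rho$ for which $\eta_\rho(c)=(-1)^{m-1}\eta_\rho(\id_{\kbar})$ for every complex conjugation $c$: if $m$ is odd then $K$ is totally real and every $\eta_\rho$ is even; if $m$ is even then $K$ is CM and $\pi_m=(1-c)/2$ picks out precisely the characters with $\eta_\rho(c)=-\eta_\rho(\id_{\kbar})$.

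Next I would show that the displayed equality factors as a product of analogous equalities indexed by $\rho$. On the algebraic side Morita equivalence identifies the $\rho$-component of $R\Gamma_c(\O_{k,S},\pi_m T_p)$ with $R\Gamma_c(\O_{k,S},\lat{E_\rho}{\eta_\rho^\vee}(m))$ and the $\rho$-component of $\theta$ with the analogous composition~\eqref{theta} over $\mathcal{O}_{E_\rho}$; on the analytic side the $\rho$-component of $L_S^\ast(m,\pi_m h^0(\Spec K),k)$ equals $L_S^\ast(m,\eta_\rho,k)$ by the factorization of the Dedekind zeta function $\zeta_K$ as a product of Artin $L$-functions over $k$. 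Theorem~\ref{etnc}(3), applied with $\eta=\eta_\rho$, $E=E_\rho$ and $M=\lat{E_\rho}{\eta_\rho^\vee}$, then yields the required factor, and multiplying over $\rho$ gives the global equality.

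I expect the main obstacle to be bookkeeping rather than mathematics: carefully reconciling the fundamental line and the period/regulator map of~\cite{bufl99} with the complex $R\Gamma_c$ used in Theorem~\ref{etnc} after passing to a maximal order and performing Morita reduction, and verifying that enlarging $S$ beyond the primes above $p$ and the ramification locus of $\eta_\rho$ introduces compatible Euler factors on both sides; the latter is automatic from the definition of $L_S^\ast$ together with Proposition~\ref{gysinprop} and Remark~\ref{independence}(5).
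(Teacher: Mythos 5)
Your overall strategy is the paper's: use maximality of the coefficient order to split the $p$-part of the conjecture character by character, check that $\pi_m$ selects exactly the characters with the parity hypothesis of Theorem~\ref{etnc}, match the analytic side via the factorization of the equivariant $L$-value into Artin $L$-values $L_S^\ast(m,\eta,k)$, and then quote Theorem~\ref{etnc}(3) (with the Euler factors for $v\in S\setminus P$ accounted for as in the formulation of \cite{bufl99}). The difference is only in how the splitting is performed, and there your argument has a concrete gap. You decompose $\Q_p[G]=\prod_\rho A_\rho$ with $A_\rho=M_{n_\rho}(D_\rho)$ and claim that maximality plus Morita equivalence reduces the $\rho$-component to the pair $(\mathcal{O}_{E_\rho},\lat{E_\rho}{\eta_\rho^\vee})$. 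When the Schur index of $\rho$ over $\Q_p$ is nontrivial (e.g.\ the $2$-dimensional character of a totally real $Q_8$-extension, with $p=2$), Morita equivalence only reduces $A_\rho$ to a maximal order in the division algebra $D_\rho$, not to $\mathcal{O}_{E_\rho}$; moreover the character $\eta_\rho$, although $E_\rho$-valued, is then not realizable over $E_\rho$, so the lattice $\lat{E_\rho}{\eta_\rho^\vee}$ that Theorem~\ref{etnc} needs does not exist over that field, and your ``single equality of invertible submodules'' must in any case be interpreted through reduced norms on the noncommutative components. As written, this step fails.

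The repair is standard and is exactly what the paper does: enlarge the coefficients to a splitting situation before decomposing. The paper chooses a number field $F$ over which every irreducible character of $G$ is realizable, replaces $\mathfrak{M}$ by a maximal order $\mathfrak{M}_F$ in $F[G]$ using \cite[Theorem~4.1]{bufl99} (for maximal orders this change of coefficients is an equivalence), decomposes by the idempotents $\pi_\psi$ attached to the absolutely irreducible characters $\psi$ allowed by $\pi_m$, and then fixes an embedding $\sigma:F\to E\subset\Qpbar$ so that the $\psi$-component of the $p$-part becomes precisely~\eqref{etnc_eq} with $\eta=\sigma\circ\psi$, to which Theorem~\ref{etnc}(3) applies. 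If you want to keep your purely local formulation, you must likewise pass to a large enough finite extension $E$ of $\Q_p$ splitting all the $D_\rho$ (harmless by Remark~\ref{Mremark}(2) and Remark~\ref{independence}(2) on the algebraic side, and by the $\sigma$-independence of $L_S^\ast$ on the analytic side) and justify the descent back to $\mathfrak{M}\otimes\Z_p$; with that addition your argument closes, and the remaining points you raise (independence of the lattice, parity selection, Euler-factor bookkeeping via Remark~\ref{independence}(5)) agree with the paper.
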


\begin{proof}
Let $F$ be a finite extension of $\Q$ such that all the irreducible
$ \Qbar $-valued characters of $G$
can be realized over $F$, and let $\mathfrak{M}_F$ be a maximal $\O_F[G]$-order in $F[G]$.
By \cite[Theorem~4.1]{bufl99}, the equivariant
Tamagawa number conjecture for the pair $(\pi_m h^0(\Spec K)(m),\pi_m\mathfrak{M})$ is then equivalent to
that for the pair $(\pi_m h^0(\Spec K)(m),\pi_m\mathfrak{M}_F)$, which can be decomposed according to
all irreducible characters $\psi$ of $G$ if $ K $ is totally
real, and those $ \psi $ satisfying $\psi(c) = -\psi(\id_{\kbar})$
if $K$ is a CM field.
For such a character $\psi$,
if $\pi_\psi = (\#G)^{-1} \sum_{g \in G} \psi(g) g^{-1}$ is the standard idempotent
in $F[G]$ corresponding to $\psi$, then
the special value of the motivic $L$-function associated to
$\pi_\psi h^0(\Spec K)(m)$
 is $L(m,\psi,k)$ in $ F $.  If we fix an embedding $\sigma: F \to E$, with
$E$ a finite extension of $\Q_p$, then $\sigma (L(m,\psi,k)) = L^\ast(m,\sigma \circ \psi,k)$, so the
$p$-part of the equivariant Tamagawa number conjecture
states that~\eqref{etnc_eq} holds with $\eta = \sigma \circ \psi$:
in the formulation of the equivariant Tamagawa number conjecture in \cite{bufl99}, 
the canonical composition
$\theta$ in~\eqref{theta} is multiplied by $\prod_{v \in S \setminus P} \Eul{v}^\ast(m,\eta,k)$
(see \cite[Section 3]{bufl99} for details).
\end{proof}

\begin{remark}
Note that if the interpolation formula~\eqref{preLSinterpol} holds for $m=0$ and $L_S^\ast(0,\eta,k) \ne 0$
then the statements of Theorem~\ref{etnc} are also true for $m=0$.  Moreover, Corollary~\ref{etnccorollary}
holds in this case if we further assume that the reciprocal Euler factors $\Eul{v}^\ast(0,\eta,k)$ are
non-trivial for all primes $v$ in $S$.
\end{remark}

\begin{remark} \label{finalremark}
(1)
Assumption~\ref{2-mainc} holds if $p=2$, $k = \Q$ and $\eta$ is the trivial character (see Remark~\ref{itholds}),
so Corollary~\ref{etnccorollary} holds without any assumptions when $m$ is odd and $k = K = \Q$.

(2)
A stronger version of Corollary~\ref{etnccorollary}, with the maximal
order replaced by $\Z[G]$ but $p \ne 2$, is proven by Burns in \cite[Corollary~2.10]{Burns10pre}
under the assumption that certain Iwasawa-theoretic $p$-adic $\mu$-invariants of $K$ are trivial.
(If $G$ is Abelian then this was already proved under similar
assumptions in \cite[Theorem~3.3]{BaBu}.)
This is also proved without assumptions if $k = \Q$ and $K/\Q$ is Abelian by
Burns and Greither in \cite[Corollary~8.1]{BuGr} for $p \ne 2$, and by Flach in \cite[Theorem~5.1]{Flach08}
and \cite[Theorem~1.2]{Flach11} for $p = 2$.
Huber and Kings \cite[Theorem~1.3.1]{Hu-Ki03} have also proved Corollary~\ref{etnccorollary}
in the case $k = \Q$, $K/\Q$ Abelian and $p$ odd.
In fact, in each case the results hold at every integer $m$,
and in \cite{BuGr,Flach08,Flach11,Hu-Ki03} the results hold for 
the pair
$(h^0(\Spec K)(m),\mathfrak{M})$ instead of $(\pi_m h^0(\Spec K)(m),\pi_m \mathfrak{M})$.
\end{remark}

\nocite{*}
\bibliographystyle{plain}
\bibliography{references}

\end{document}